\newtheorem{theorem}{Theorem}[section]
\newtheorem{lemma}[theorem]{Lemma}
\newtheorem{conjecture}[theorem]{Conjecture}
\newtheorem{question}[theorem]{Question}
\newtheorem{observation}[theorem]{Observation}
\DeclareMathOperator{\tw}{tw}
 \DeclareMathOperator{\cone}{{\text{\sf{cone}}}}
\def\dd{\hbox{-}}   
\newcommand{\mf}{\mathfrak}
\newcounter{tbox}
\newcommand{\sta}[1]{\medskip\medskip\refstepcounter{tbox}\noindent{\parbox{\textwidth}{(\thetbox) \emph{#1}}}\vspace*{0.3cm}}
\newcommand{\mylongtitle}[1]{%
  \ifodd\value{page}%
    \protect\parbox{0.97\linewidth}{#1}\hfill%
  \else%
    \hfill\protect\parbox{0.97\linewidth}{#1}%
  \fi%
}
\title[Even-hole-free graphs of large treewidth.]{Chordal graphs, even-hole-free graphs and sparse obstructions to bounded treewidth}
\author{Sepehr Hajebi$^{\dagger}$}
\thanks{$^{\dagger}$ Department of Combinatorics and Optimization, University of Waterloo, Waterloo, ON, Canada.}
\date{\today}
\begin{document}
\maketitle

\begin{abstract}
Even-hole-free graphs pose a central challenge in identifying hereditary classes of bounded treewidth. We investigate this matter by presenting and studying the following conjecture: for an integer $t\geq 4$ and a graph $H$, every even-hole-free graph of large enough treewidth has an induced subgraph isomorphic to either $K_t$ or $H$, if (and only if) $H$ is a $K_4$-free chordal graph. The ``only if'' part follows from the properties of the so-called \textit{layered wheels}, a construction by Sintiari and Trotignon consisting of (even-hole, $K_4$)-free graphs with arbitrarily large treewidth.

Alecu, Chudnovsky, Spirkl and the author proved recently that the conjecture holds in two special cases: 
(a) when $t=4$; and (b) when $H=\cone (F)$ for some forest $F$; that is, $H$ is obtained from a forest $F$ by adding a universal vertex. Our first result is a common strengthening of (a) and (b): for an integer $t\geq 4$ and graphs $F$ and $H$, (even-hole, $\cone(\cone (F))$, $H$, $K_t$)-free graphs have bounded treewidth if and only if $F$ is a forest and $H$ is a $K_4$-free chordal graph.

Also, for general $t\geq 4$, we push the current state of the art further than (b) by settling the conjecture for the smallest choices of $H$ that are \textit{not} coned forests. The latter follows from our second result: we prove the conjecture when $H$ is a \textit{crystal}; that is, a graph obtained from arbitrarily many coned double stars by gluing them together along the ``middle'' edges of the double stars.

In the first version of this paper, we suggested the following which is a strengthening of our main conjecture: for every $t\geq 1$, every graph of sufficiently large treewidth has an induced subgraph of treewidth $t$ which is either complete, complete bipartite, or $2$-degenerate. This strengthening has now been refuted by Chudnovsky and Trotignon [On treewidth and maximum cliques, \href{https://arxiv.org/abs/2405.07471}{\texttt{arxiv:2405.07471}}, 2024].
\end{abstract}

\section{Introduction}\label{sec:intro}

\setcounter{subsection}{-1}

\subsection{Skipping pass for the well-versed}This paper features and studies the following conjecture:

\begin{conjecture}\label{conj:getgo}
    For all $t\geq 4$ and every $K_4$-free chordal graph $H$, every even-hole-free graph of sufficiently large treewidth has an induced subgraph isomorphic to $K_t$ or $H$.
\end{conjecture}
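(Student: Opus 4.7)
The plan is to proceed by induction on $|V(H)|$, exploiting the fact that every $K_4$-free chordal graph on at least two vertices has a simplicial vertex $v$ with $|N_H(v)|\leq 2$. Writing $H' = H - v$, the graph $H'$ is again $K_4$-free chordal, so by the inductive hypothesis there exists $\tau = \tau(t, H')$ such that every even-hole-free graph of treewidth more than $\tau$ contains an induced $K_t$ or an induced copy of $H'$. The base cases (a single vertex, an edge, a triangle) are immediate, while the deeper initial layers of the induction --- where $H$ is already a coned forest --- are covered by the result (b) of Alecu, Chudnovsky, Spirkl and the author stated in the excerpt.

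For the inductive step, let $G$ be an even-hole-free, $K_t$-free graph of sufficiently large treewidth, and the aim is to produce an induced copy of $H$. I would first extract many vertex-disjoint induced copies of $H'$ iteratively: since deleting at most $|V(H')|$ vertices drops $\tw(G)$ by at most $|V(H')|$, we can pick one copy of $H'$ (by induction), delete it, and repeat, obtaining arbitrarily many vertex-disjoint copies of $H'$ together with a large-treewidth remainder $G^\star$. In each extracted copy, mark the clique $C$ of size $|N_H(v)|\in\{0,1,2\}$ on which $v$ would need to be attached to form a copy of $H$.

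Next, apply a Ramsey-style cleaning to the vertices of $G^\star$ according to their attachment pattern onto these marked cliques, with the goal of producing a vertex $u\in V(G^\star)$ whose neighborhood inside some copy of $H'$ is exactly $C$ and which is sufficiently benign with respect to the remaining copies so that no even hole can be traced through $u$. Even-hole-freeness would be used to rule out two-edge-common-neighbor configurations producing $C_4$ (and longer even cycles), while $K_t$-freeness would be invoked to bound the size of purely clique-like obstructions.

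The main obstacle, in my view, is the case $|N_H(v)|=2$: here the attachment is to an \emph{edge} of $H'$, creating a triangle, and the Sintiari--Trotignon layered wheels (which are exactly the obstruction forcing the conjecture to require $K_4$-freeness of $H$) show that in even-hole-free $K_4$-free graphs, common neighborhoods of edges can be arbitrarily wild. The hope is that $K_t$-freeness for $t\geq 4$, combined with the large-treewidth hypothesis on the ambient graph, forces enough structure in these common neighborhoods to enable the attachment. This is precisely the technical core addressed by the crystals theorem (Result 2 in the excerpt) for a restricted family of $H$, and extending that machinery to handle gluing along \emph{triangles} (rather than just edges) in an arbitrary $K_4$-free chordal $H$ is where I expect the genuine difficulty to lie.
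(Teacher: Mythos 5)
The statement you are attempting to prove is Conjecture~\ref{conj:getgo}, which the paper does \emph{not} prove; it is presented as an open conjecture, and the paper establishes only the two partial results Theorems~\ref{thm:maincone1} and~\ref{thm:maincrystal1}. So there is no hidden proof in the paper to compare yours against, and your argument cannot currently be completed. Still, it is worth saying precisely where your sketch breaks down and how it differs from the machinery the paper uses for its partial results.

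Your plan is to take a simplicial vertex $v$ of $H$ with $|N_H(v)|\le 2$, invoke the inductive hypothesis to extract many vertex-disjoint copies of $H'=H-v$, and then Ramsey-clean to attach a new vertex $u$ onto the clique $C=N_H(v)$ in some extracted copy. The genuine gap (beyond the one you flag for $|N_H(v)|=2$) is that, once the copies of $H'$ are removed from $G$, nothing forces a vertex of the remainder $G^\star$ to attach to some extracted copy with neighborhood \emph{exactly} $C$. Large treewidth of $G^\star$ says nothing about how $G^\star$ meets the deleted copies: those copies could be anticomplete to all of $G^\star$, or attach in arbitrary ways, and Ramsey arguments only make attachment patterns \emph{uniform} across copies, they do not manufacture the specific pattern you need. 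Moreover, you need $u$ to be adjacent to exactly $C$ in $H'$ and nonadjacent to the rest of that copy; forcing both the positive adjacencies (a common neighbor of a chosen edge of $H'$) and the negative ones (nonadjacency to the remaining $|V(H')|-2$ vertices) by deleting copies and hoping the remainder obliges is exactly what layered wheels are designed to defeat.

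The paper's route for the partial results never separates ``extract $H'$'' from ``attach $v$.'' Instead, Theorem~\ref{thm:twtokalei} builds a \emph{phantom} --- a nested scaffold $(Z_0,\ldots,Z_r;\Gamma_i)$ in which every edge $e$ at level $i-1$ is assigned a private $d$-set $\Gamma_i(e)$ of common neighbors at level $i$ --- and then Theorems~\ref{thm:phantomtodoublecone} and~\ref{thm:phantomtocrystal} grow the target $2$-tree level by level \emph{inside} the phantom. The private supply $\Gamma_i(e)$ is precisely what your approach is missing: it guarantees, for each edge $e$ of the partial copy built so far, candidate attachers that are common neighbors of the two ends of $e$ and (after cleaning) have no other unwanted adjacencies into the partial copy, because the phantom keeps track of where everything sits. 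If you wish to pursue your induction, the inductive hypothesis must be strengthened so that it returns not merely a copy of $H'$ but a copy carrying, for each edge, a controlled reservoir of potential attachers; that strengthened object is in essence the phantom, and even with it, only coned forests and crystals are currently handled, not all $K_4$-free chordal $H$.
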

We have two main results: Theorems~\ref{thm:maincone1} and \ref{thm:maincrystal1}. The former unifies (the only) two preexisting partial results \cite{twxi} on Conjecture~\ref{conj:getgo} into a common strengthening, and the latter introduces new graphs $H$ satisfying the conjecture holds for all $t\geq 4$. 

 Broadly, this work is tied to an ongoing search for a characterization of bounded treewidth in graph classes defined by excluded induced subgraphs.  True or false, it appears to us that Conjecture~\ref{conj:getgo} isolates one of the main difficulties with finding the desired characterization. 
 
 However, this may not be appreciated immediately. We therefore feel compelled to take the next few subsections to delve into the details. A reader who is familiar with the topic, and shares the above perspective with us, or at least sees sufficient motivation behind the work, may jump straight ahead to Subsection~\ref{sub:main}.

\subsection{Background}Graphs in this paper have finite vertex sets, no loops and no parallel edges. The \textit{treewidth} of a graph $G$, denoted $\tw(G)$, is the smallest integer $w\geq 1$ for which there is a tree $T$ as well as a subtree of $T$ assigned to each vertex of $G$, such that the subtrees corresponding to adjacent vertices of $G$ intersect, and each vertex of $T$ belongs to at most $w+1$ subtrees corresponding to the vertices of $G$.

This may be viewed as a natural extension of the classical ``Helly property'' for subtrees \cite{helly}: \textit{given a tree $T$ and a collection of pairwise intersecting subtrees of $T$, some vertex of $T$ belongs to all subtrees}. The treewidth in turn measures how few subtrees each vertex of $T$ can be guaranteed to appear in, provided only \textit{some} pairs of subtrees intersect.

\begin{figure}[t!]
    \centering
    \includegraphics[scale=0.7]{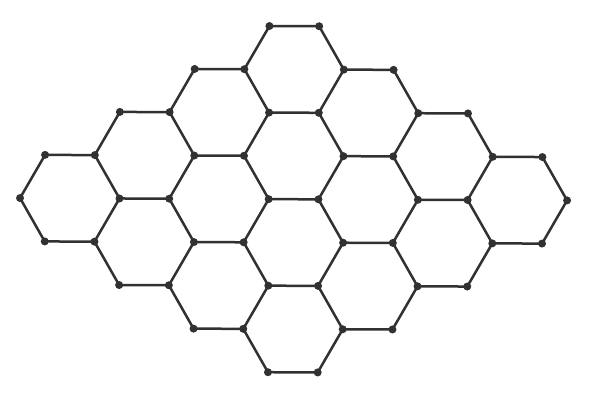}
    \caption{The graph $W_{5\times 5}$.}
    \label{fig:hexwall}
\end{figure}

Unlike the formal definition of the treewidth which we have nothing to do with, the present paper is concerned with what causes a graph to have large treewidth. This is a subject of great interest in structural and algorithmic graph theory because, once the graphs we study have bounded treewidth, there comes a variety of structural and algorithmic techniques to deal with them \cite{GMV, GMII}.

There are many examples of graphs with large treewidth, two of which are dense: the Helly property of subtrees implies that, for each $t\geq 1$, the complete graph $K_{t+1}$ and the complete bipartite graph $K_{t,t}$ both have treewidth $t$. All other examples are sparse (crucially, as sparse as possible; we will come back to this in Subsection~\ref{sub:2-deg}), among which the most famous one is the \textit{hexagonal grid}. For every integer $t\geq 1$, the $t$-by-$t$ hexagonal grid, also known as the $t$-by-$t$ \textit{wall}, denoted $W_{t\times t}$, has treewidth $t$ \cite{GMV} (see Figure~\ref{fig:hexwall}).
It is also well-known \cite{diestel} that subdividing the edges of a graph does not affect its treewidth, and so all subdivisions of $W_{t\times t}$ have treewidth $t$.

More importantly, there is a qualitative converse. In 1986, Robertson and Seymour \cite{GMV} proved the following still startling result: if a graph $G$ has large enough treewidth, then one may obtain from $G$, by a series of vertex- and/or edge-deletions, a subdivision of a (relatively) large wall. This is known as the ``Grid (Minor) Theorem,'' and remains one of the most celebrated highlights of the graph minors project:
\begin{theorem}[Robertson and Seymour \cite{GMV}] \label{thm:gridthm}
For every $t\geq 1$, every graph of sufficiently large treewidth contains a subdivision of $W_{t\times t}$ as a subgraph.
\end{theorem}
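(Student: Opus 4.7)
The plan is to reduce the statement to finding a large wall as a minor. Since $W_{t\times t}$ is subcubic (maximum degree three), a standard fact states that a graph contains a subcubic graph $H$ as a minor if and only if it contains a subdivision of $H$ as a subgraph: the branch sets can be trimmed to trees with at most three attachment points, and such trees can be re-routed as internally disjoint paths joining the attachment vertices. So it suffices to prove the minor version: every graph of sufficiently large treewidth contains $W_{t\times t}$ as a minor.

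To produce a minor from an abstract treewidth hypothesis, I would invoke the treewidth-bramble duality of Seymour and Thomas: $\tw(G)\geq k$ if and only if $G$ admits a bramble of order greater than $k$, where a bramble is a family of pairwise touching connected subgraphs of $G$ and its order is the minimum size of a vertex set hitting every member. This converts the qualitative assumption ``$\tw(G)$ is huge'' into a concrete combinatorial certificate: a family $\mathcal{B}$ of pairwise touching connected subgraphs, no small set meeting them all. Equivalently (and often more convenient), one may produce a \emph{haven} of large order, i.e., a function assigning to each small vertex set a component of its complement, consistently.

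The core technical step is to extract a wall minor of size $t$ from a bramble of some sufficiently large order $N=N(t)$. Roughly, I would proceed as follows. Using Menger's theorem together with the bramble's high order, first extract a large collection of pairwise vertex-disjoint connected subgraphs such that between any two there are many pairwise vertex-disjoint paths. Then build the $t$ rows and $t$ columns of the wall iteratively: fix $t$ of the connected pieces as a potential column system, and use the linkage between the remaining pieces to route a path crossing all columns in order, producing the first row; delete the used paths, verify that a bramble of still-large order survives in the residual graph (shrinking $N$ by a controlled factor), and repeat. After $t$ iterations, combining the rows with the column skeletons yields a wall subdivision. A Ramsey-type selection is typically inserted at each stage to guarantee clean intersection patterns.

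I expect the last step to be the main obstacle. The original Robertson--Seymour argument in Graph Minors V is long and intricate, and the subsequent simplifications (most notably by Diestel, Jensen, Gorbunov and Thomassen, and the polynomial bounds due to Chekuri--Chuzhoy and Chuzhoy) all require careful linkage management: paths routed to form later rows must not destroy the connectivity used to build earlier rows, which forces one to maintain a pool of disjoint resources much larger than what is ultimately used. Getting the bookkeeping right, and in particular controlling how the bramble order must grow with $t$, is where essentially all the real work of the theorem lies; the opening reduction and the bramble duality are comparatively routine.
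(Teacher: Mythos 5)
This statement is the Grid Theorem of Robertson and Seymour, cited in the paper from Graph Minors V \cite{GMV}; the paper offers no proof of it, so there is no internal argument to compare yours against.

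Your outline follows the standard modern route, and the two reductions you lead with are both sound: since $W_{t\times t}$ has maximum degree three, a graph contains it as a minor if and only if it contains a subdivision of it as a subgraph, and treewidth--bramble duality (Seymour and Thomas) converts the hypothesis of large treewidth into a high-order bramble. (The original Graph Minors V argument predates the bramble formulation; the duality-based approach is that of the later simplified proofs you mention.) However, what you have written stops being a proof precisely where the theorem becomes nontrivial. The step from a large-order bramble to a $t\times t$ wall minor is sketched only at the level of intent --- iteratively routing rows across a column system, re-establishing a residual bramble, applying a Ramsey-type cleanup --- and none of the actual obstacles (keeping linkages alive after deleting earlier rows, quantifying how the required bramble order must grow with $t$, turning the Menger paths into a usable crossing pattern) are addressed or even set up with enough precision to be addressed. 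You candidly acknowledge that this is where the real work lies, which is good self-assessment, but it means the proposal is an accurate road map to the Grid Theorem rather than a proof of it.
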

\begin{figure}[t!]
    \centering
    \includegraphics[scale=0.6]{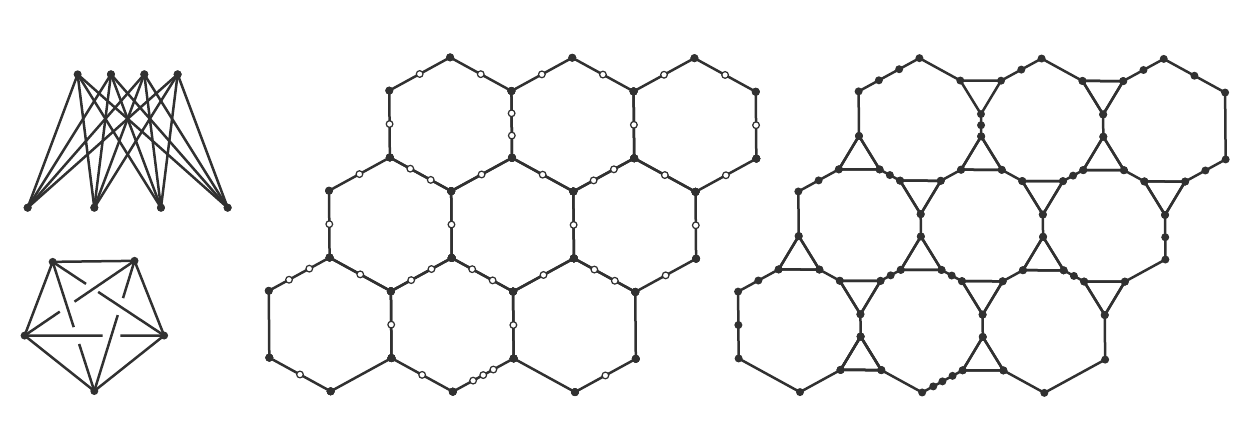}
    \caption{The $4$-basic obstructions (the graph on the right is the line graph of the subdivided wall in the middle).}
    \label{fig:basic}
\end{figure}

What if we only allow for removing vertices? Given a graph $G$ with vertex set $V(G)$ and edge set $E(G)$, an \textit{induced subgraph} of $G$ is the graph $G\setminus X$ for some $X\subseteq V(G)$, that is, the graph obtained from $G$ by removing the vertices in $X$. For $X\subseteq V(G)$, we use both $X$ and $G[X]$ to denote the subgraph of $G$ induced on $X$, which is the same as $G\setminus (V(G)\setminus X)$. 

An analog of Theorem~\ref{thm:gridthm} for induced subgraphs had not received much attention until a couple of years ago. However, some clues have long been around:  complete graphs, complete bipartite graphs and subdivided walls are three types of ``induced subgraph obstructions'' to bounded treewidth. This means there are graphs with arbitrarily large treewidth of each type, and given a graph $H$ from a certain type, all induced subgraphs of $H$ with large (enough) treewidth are also of the same type.

There is also a fourth type of obstruction: line graphs of subdivided walls. It is now standard terminology to refer to all these graphs together as the \textit{basic obstructions}. More exactly, for an integer $t\geq 1$, by a \textit{$t$-basic obstruction} we mean one of the following: the complete graph $K_{t+1}$, the complete bipartite graph $K_{t,t}$, subdivisions of $W_{t\times t}$ and the line-graphs of subdivisions of $W_{t\times t}$ (see Figure~\ref{fig:basic}). The $t$-basic obstructions all have treewidth $t$ \cite{GMV}.

\subsection{Why even-hole-free graphs?} For graphs $G$ and $H$, we say that $G$ \textit{contains} $H$ if $H$ is isomorphic to an induced subgraph of $G$; otherwise, we say $G$ is \textit{$H$-free}. For a family $\mathcal{H}$ of graphs, we say $G$ is $\mathcal{H}$-free if $G$ is $H$-free for all $H\in \mathcal{H}$. In pursuit of a ``grid theorem'' for induced subgraphs, our discussion from the previous subsection begs the following question: 
\begin{question}\label{q:basicall?}
    Is it true that for each $t\geq 1$, every graph of sufficiently large treewidth contains a $t$-basic obstruction?
\end{question}
\begin{figure}[t!]
    \centering
    \includegraphics[scale=0.5]{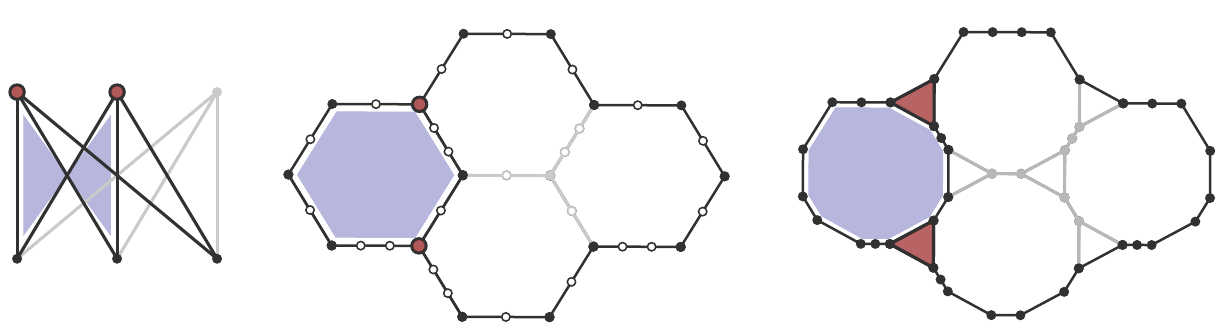}
    \caption{For $t\geq 3$, all non-complete $t$-basic obstructions have even holes. Note, in thick lines, the theta on the left and the middle, and the prism on the right, and note the even holes highlighted in them.}
    \label{fig:basic3pc}
\end{figure}
One may check that for $t\geq 3$, all $t$-basic obstructions except for complete graphs contain even holes (see Figure~\ref{fig:basic3pc}; recall that a \textit{hole} in a graph $G$ is an induced cycle on four or more vertices and an \textit{even hole} in $G$ is a hole on an even number of vertices. The definition of a ``theta'' and a ``prism'' will appear in Section~\ref{sec:2-tree}). Therefore, an even easier question to ask is:
\begin{question}\label{q:ehf}
    Is it true that for every $t\geq 1$, the class of all (even-hole, $K_t$)-free graphs has bounded treewidth?
\end{question}
In other words, as for approaching Question~\ref{q:basicall?}, inspired by the properties of the basic obstructions, it is natural to ask: \textit{do all obstructions (basic or not) except for complete graphs contain even holes?} This indicates the pivotal role of even-hole-free graphs in the interplay between the induced subgraphs and the treewidth (which, despite decades of extensive research on these graphs, seems to have not been anticipated for so long).

\subsection{The difficulty with even-hole-free graphs.} As said above, hoping for a non-trivial success toward the answer to Question~\ref{q:basicall?}, Question~\ref{q:ehf} is a natural weakening to begin with. The case $t\leq 3$ has already been solved \cite{evenholetrianglefree}: (even-hole, $K_3$)-free graphs have treewidth at most five. In 2019, Sintiari and Trotignon \cite{layered-wheels} answered Question~\ref{q:ehf} with an emphatic no. They provided explicit examples of (even-hole, $K_4$)-free graphs with arbitrarily large treewidth called the \textit{layered wheels}. 

There are two important observations about the layered wheels. First, although this will soon be seen to be a red herring, the full definition of these graphs \cite{layered-wheels} is quite technical, almost contrived-looking (it takes a couple of pages, which is why we omit it). Second, and more importantly in the context of this paper, unlike the global structure of layered wheels, their local structure is highly restricted; indeed, as restricted as possible. To explain this, we need the exact statement of the main result of \cite{layered-wheels}:

\begin{theorem}[Sintiari and Trotignon \cite{layered-wheels}]\label{thm:layeredwheels}
    For every integer $h\geq 1$, there are (even-hole, $K_4$)-free graphs of arbitrarily large treewidth in which there is no induced subgraph on $h$ or fewer vertices containing a hole.
\end{theorem}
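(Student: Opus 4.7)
The plan is to give an inductive construction, parametrized by both a target treewidth $t$ and the girth bound $h$. For each $t\geq 1$ and $h\geq 1$ I aim to produce a graph $L=L(t,h)$ that is (even-hole, $K_4$)-free, has treewidth at least $t$, and in which every hole has length greater than $h$; the last condition is equivalent to the stated one, since any hole of length at most $h$ is itself an induced subgraph on at most $h$ vertices. The graph $L$ is built in layers $V_0, V_1, \ldots, V_t$, where each $V_i$ is a long induced path, and where consecutive layers are linked through carefully chosen long induced paths, which play the role of the ``spokes'' of a wheel.

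Concretely, the construction starts from a long induced path $V_0$. Having built $V_0, \ldots, V_{i-1}$, layer $V_i$ is obtained as follows: for each vertex $v \in V_{i-1}$, one attaches a ``sector'' to $v$ by linking $v$ to a new induced path of some odd length greater than $h$; the far endpoints of these sectors are then joined end-to-end by a long induced path, and this is declared to be $V_i$. Thus each step glues a subdivided wheel-like gadget, with long subdivided rims and long induced spokes, on top of the previous construction. The parities of all path lengths are fixed at the outset (for instance, all sectors of odd length, all rim arcs of a prescribed parity) so that every cycle of $L$ has total length of a prescribed parity.

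I would then verify the three properties separately. For \emph{large treewidth}, contracting each long subdividing path to a single edge produces an auxiliary graph containing a large wall subdivision (or a $K_{t,t}$-subdivision) as a subgraph, and Theorem~\ref{thm:gridthm} together with the fact that subdivision does not change treewidth forces $\tw(L)\geq t$. For \emph{$K_4$-freeness}, any triangle in $L$ would have to appear at an attachment point between a sector and a rim, and the structure of the construction allows only very few triangles and no four pairwise adjacent vertices. For the \emph{no-short-hole} property, any hole in $L$ must traverse at least two subdividing paths of length exceeding $h$. The delicate point is \emph{even-hole-freeness}: every hole must be decomposed into a union of segments of sectors and of rim arcs, and a parity calculation, using the prescribed parities, shows that the total length is necessarily odd.

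The main obstacle is exactly this parity bookkeeping. A hole need not be confined to a single layer: it may thread through several layers at once, jumping back and forth between sectors and rims. The hard work is a case analysis that classifies how a hole can enter and leave the gadget added at each layer, and checks that in each case the number of contributing odd-length segments remains odd. In particular, one must rule out ``shortcut'' holes that bypass a layer through parallel spokes, which is where the large-$h$ assumption on sector lengths is crucial. If the parity analysis can be pushed through uniformly over all layers, all three properties follow simultaneously and the theorem is proved.
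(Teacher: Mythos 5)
The paper does not prove Theorem~\ref{thm:layeredwheels}; it is cited from Sintiari and Trotignon \cite{layered-wheels}, and the paper explicitly says that the construction is ``quite technical'' and is omitted. So there is no in-paper proof to compare against. Nonetheless, your proposed construction has a fatal structural flaw that is worth making precise.

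As you have described it, $L$ is triangle-free: every edge lies on a long induced path (a sector or a rim arc), sectors attach to $V_{i-1}$ and to $V_i$ at single vertices, and the sectors and rim arcs have length greater than $h\geq 1$, so no two edges share both endpoints with a third. You remark in passing that triangles ``would have to appear at an attachment point,'' but the construction as given produces none. This is not a minor defect: by the result of Cameron, da Silva, Huang and Vu\v{s}kovi\'c cited in the paper as \cite{evenholetrianglefree}, every (even-hole, $K_3$)-free graph has treewidth at most five. So if your $L$ is even-hole-free it automatically has bounded treewidth, and the two goals you are trying to meet at once -- even-hole-freeness and unbounded treewidth -- cannot both hold for a triangle-free graph. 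The whole interest of the Sintiari--Trotignon construction, and what makes Question~\ref{q:ehf} non-trivial for $t=4$, is precisely that it packs in many triangles while avoiding $K_4$; the layered wheel is a $K_{r+1}$-minor model in which each branch set is a path, and a vertex in an earlier layer has many consecutive neighbours on later-layer paths, forming wheels. The large treewidth comes from the clique minor, not from a wall subdivision. Even-hole-freeness is then enforced not by parity bookkeeping on path lengths, but by the wheel structure itself, specifically by the property (highlighted in Section~\ref{sec:phantomintro} and linked to Theorem~\ref{thm:evenwheeltheta}) that adjacent vertices in earlier layers have private common neighbours in every subsequent layer. A parity-based scheme cannot do this job: if you genuinely forced every cycle to be odd you would have a graph whose blocks are edges and odd cycles, again of treewidth at most two; if you only force holes to be odd, you need chords and hence triangles to provide those chords, and then the whole parity calculus falls apart because the chorded cycles no longer have controlled length. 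You would need to abandon the ``long sectors joined at single vertices'' picture and instead build genuine wheels with consecutive rim neighbours, and replace the parity analysis with a case analysis of how a putative even hole could meet each wheel, which is where the real technical weight of Sintiari--Trotignon's argument sits.
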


Graphs with no hole are called \textit{chordal}. It is well-known that among graphs of any given clique number, chordal graphs of the same clique number are exactly the 
graphs with the smallest possible treewidth (up to taking induced subgraphs). Consequently, by Theorem~\ref{thm:layeredwheels}, layered wheels are (even-hole, $K_4$)-free graphs of arbitrarily large treewidth in which the treewidth of every small induced subgraph -- regardless of how large our notion of ``small'' is -- is always the smallest possible. 

\subsection{The conjecture and the results (old and new)}\label{sub:main} The above observation leads us to a strange contrast between the global and the local structure of the layered wheels. We believe this contrast addresses the very nature of (not only the layered wheels but) all even-hole-free graphs of large treewidth (and bounded clique number).

Let us elaborate. For an integer $k\geq 1$, by a \textit{$k$-forest} we mean a $K_{k+2}$-free chordal graph. For instance, $1$-forests are exactly the forests. Now, suppose $H$ is a graph such that every layered wheel of sufficiently large treewidth contains $H$. Then, by Theorem~\ref{thm:layeredwheels}, $H$ must be a $2$-forest. Furthermore, as one may check, using the construction of layered wheels \cite{layered-wheels}, the converse is true, as well:
\begin{observation}\label{obs:chordal}
   Given a graph $H$, every layered wheel of sufficiently large treewidth contains $H$ if and only if $H$ is a $2$-forest. 
\end{observation}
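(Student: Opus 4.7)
The ``only if'' direction is immediate from Theorem~\ref{thm:layeredwheels}. If every layered wheel of sufficiently large treewidth contains $H$, set $h = |V(H)|$ and apply Theorem~\ref{thm:layeredwheels} to obtain a layered wheel containing $H$ in which no induced subgraph on $h$ or fewer vertices has a hole; then $H$ itself must be chordal. Since the layered wheels are also $K_4$-free, so is $H$. Hence $H$ is a $2$-forest.

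For the ``if'' direction, suppose $H$ is a $2$-forest; the goal is to show that every sufficiently large layered wheel contains $H$. The plan is to use a simplicial elimination ordering $v_1, \dots, v_n$ of $V(H)$, where each $v_i$ is simplicial in $H[\{v_1, \dots, v_i\}]$. Because $H$ is $K_4$-free, the earlier neighborhood $N_i = N_H(v_i) \cap \{v_1, \dots, v_{i-1}\}$ is a clique of size at most two, so $H$ can be built by iteratively attaching, to a vertex or to an edge of what has been built so far, a new vertex that forms either a pendant or the apex of a triangle. I would then embed $H$ into a layered wheel $G$ of sufficiently large treewidth by induction on $i$, maintaining the embedding of $H[\{v_1, \dots, v_i\}]$ into an initial segment of the layers of $G$.

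The heart of the argument is an ``extension lemma'' tailored to the layered wheels of \cite{layered-wheels}: in a layered wheel with sufficiently many remaining layers, for every vertex $u$ (respectively, every edge $uv$) in some layer, there is an unused vertex $w$ in a deeper layer whose neighborhood in $G$ meets the already-embedded portion of $H$ exactly in $\{u\}$ (respectively, exactly in $\{u,v\}$). Granting this, the induction step is painless: to embed $v_{i+1}$, apply the extension lemma to the image of $N_{i+1}$ and send $v_{i+1}$ to the vertex $w$ it produces.

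The main obstacle is of course verifying the extension lemma, which requires opening up the explicit construction of the layered wheels in \cite{layered-wheels} and checking that in each layer the triangles and vertices of the previous layer have an abundance of fresh neighbors (both of triangle- and of pendant-type) attached to them, in numbers that beat the budget $|V(H)|$. Since $|V(H)|$ only bounds how many depths of the construction we need to consume, an inductive bookkeeping argument tracking which local structures have been used suffices. No even hole or $K_4$ is ever created along the way, simply because neither exists in $G$ to begin with.
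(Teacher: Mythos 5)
Your ``only if'' argument is correct and is exactly what the paper has in mind: apply Theorem~\ref{thm:layeredwheels} with $h = |V(H)|$ to force $H$ to be chordal, and use $K_4$-freeness of layered wheels to force $H$ to be $K_4$-free.

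For the ``if'' direction, two remarks. First, the paper itself does not prove it---the sentence preceding the Observation only says it ``may be checked using the construction of layered wheels,'' so the paper defers the verification to \cite{layered-wheels} just as you do; your sketch via a simplicial elimination ordering plus an ``extension lemma'' is a sensible way to structure that verification, and the reduction to the extension lemma (attach a pendant to a vertex, or a new apex to an edge, with no stray adjacencies) is correct given $K_4$-freeness. But you should be aware that you have replaced one unverified claim with another: the extension lemma is precisely the ``many private common neighbors in the next layer'' property alluded to in Section~\ref{sec:phantomintro}, and for the pendant case you would still have to argue that a private common neighbor of $u$ and an unused vertex $v$ picks up no other adjacencies to the embedded set (recall each layer is itself a path, so a careless choice could collide with an already-used vertex of the same layer). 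That bookkeeping is exactly the part you acknowledge you have not done. Second, and more to the point: there is a shorter route that avoids the construction entirely. Layered wheels are (even-hole, $K_4$)-free, so Theorem~\ref{thm:tw11k4}---which the paper cites a few lines below the Observation---already says that for every $2$-forest $H$, every (even-hole, $K_4$)-free graph of sufficiently large treewidth contains $H$; specializing to layered wheels gives the ``if'' direction immediately, with no need to open up the layered-wheel construction at all. If you prefer a proof that does not invoke the heavy machinery behind Theorem~\ref{thm:tw11k4}, then your direct approach is the right one, but the extension lemma must actually be proved against the definitions in \cite{layered-wheels}.
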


Indeed, the ``only if'' implication in Observation~\ref{obs:chordal} extends beyond layered wheels to all (even-hole, $K_4$)-free graphs, and even further to larger upper bounds on the clique number. Explicitly, let $t\geq 4$ be an integer and let $H$ be a graph such that every (even-hole, $K_t$)-free graph of sufficiently large treewidth contains $H$. Then, it follows from Theorem~\ref{thm:layeredwheels} that $H$ is a $2$-forest (again, there is no restriction on $H$ for $t\leq 3$ because (even-hole, $K_3$)-free graphs have treewidth at most five \cite{evenholetrianglefree}).

We conjecture that, like the situation with Observation~\ref{obs:chordal}, the converse is also true:
\begin{conjecture}\label{conj:mainiff}
    Given an integer $t\geq 4$ and a $2$-forest $H$, every (even-hole, $K_t$)-free graph of sufficiently large treewidth contains $H$ if and only if $H$ is a $2$-forest.
\end{conjecture}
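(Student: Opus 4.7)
\bigskip
\noindent\textbf{Proof proposal.} The ``only if'' direction is immediate from Theorem~\ref{thm:layeredwheels}. If $H$ is not a $2$-forest then either $H$ contains $K_4$ or $H$ contains a hole. In the first case the layered wheels are already $(K_4,\text{even-hole})$-free with arbitrarily large treewidth and, being $K_4$-free, avoid $H$. In the second case, taking the parameter $h\geq |V(H)|$ in Theorem~\ref{thm:layeredwheels} yields $(K_4,\text{even-hole})$-free graphs of arbitrarily large treewidth in which no induced subgraph on $\leq h$ vertices contains a hole, and these therefore avoid $H$. Either way, since $t\geq 4$, the examples produced are also $K_t$-free.

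For the ``if'' direction, I would proceed by induction on $|V(H)|$. Since $H$ is chordal and $K_4$-free, it has a simplicial vertex $v$ whose neighborhood $N_H(v)$ is a clique of size $k\in\{0,1,2,3\}$. Let $H'=H\setminus\{v\}$, which is a $K_4$-free chordal graph on fewer vertices. Given a putative counterexample $G$ that is (even-hole, $K_t$, $H$)-free of very large treewidth, the goal is to locate inside $G$ an induced copy of $H'$ together with a vertex $v^{*}$ whose neighborhood in the copy equals the image of $N_H(v)$, thereby supplying the forbidden $H$.

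The plain inductive hypothesis is too weak to drive such a step, because it only produces a single copy of $H'$. The right strengthening, in the spirit of \cite{twxi} and of the two main theorems of the present paper, should read roughly: for every integer $N$, every (even-hole, $K_t$)-free graph $G$ of treewidth at least some $\tau=\tau(H',N,t)$ contains $N$ induced copies of $H'$ together with a prescribed family of ``candidate attachment'' vertices joined in controlled ways to specified cliques inside these copies. Iterating this enriched statement and then applying a hypergraph Ramsey argument to the candidate attachments should extract a single vertex adjacent to exactly the image of $N_H(v)$ in some copy, supplying the missing simplicial vertex $v$.

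The crucial obstacle is the production of these candidate attachments at a prescribed clique of size up to three, without creating either $K_t$ or an even hole in the process. The case $k=0$ is essentially a disjoint-union/Ramsey argument on induced copies of $H'$. The case $k=1$ is already deep: it corresponds to the cone-of-forest result of \cite{twxi} and, in iterated form, to the first main theorem of the present paper, and requires even-hole-free structural input to convert a wall-like witness of high treewidth into a common-neighbor vertex. The case $k=2$ is the thrust of the paper's crystal theorem. The case $k=3$, which demands extracting a common $K_3$-neighbor of three prescribed vertices inside a bounded-clique-number even-hole-free graph while avoiding an even hole, appears to lie beyond current techniques, and this is where I expect the approach to stall and the conjecture to remain open.
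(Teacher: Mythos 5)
Your treatment of the ``only if'' direction is exactly what the paper intends: Theorem~\ref{thm:layeredwheels} supplies (even-hole, $K_4$)-free graphs of arbitrarily large treewidth that are locally chordal up to any prescribed size $h$; since these are also $K_t$-free for every $t\geq 4$, any graph $H$ that must appear in all such graphs has to be $K_4$-free and chordal, i.e.\ a $2$-forest. The paper states this as a consequence of Theorem~\ref{thm:layeredwheels} and does not belabor it; your two-case split (either $H\supseteq K_4$ or $H$ has a hole) fills in the short argument correctly. You are also right that the ``if'' direction is not proved in the paper: it is precisely the paper's main open Conjecture~\ref{conj:main}, with only the partial results Theorems~\ref{thm:tw11k4}, \ref{thm:tw11conedforest}, \ref{thm:maincone1} and \ref{thm:maincrystal1}. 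So your proposal correctly reflects what is and is not known.

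There is, however, a concrete error in your diagnosis of where the inductive scheme would stall. You allow the simplicial vertex $v$ of $H$ to have $|N_H(v)|=k\in\{0,1,2,3\}$ and flag the case $k=3$ as the blocker. But $H$ is $K_4$-free, and for a simplicial vertex $v$ the set $N_H[v]=\{v\}\cup N_H(v)$ is a clique; hence $|N_H(v)|\leq 2$. The case $k=3$ never arises. In fact the paper exploits exactly this: Theorem~\ref{thm:crystallizedvertex} shows every $2$-tree on at least four vertices has a \emph{crystallized} vertex, which is in particular a degree-two simplicial vertex, and the entire phantom machinery (Theorems~\ref{thm:twtokalei}, \ref{thm:doublecone}, \ref{thm:crystal}) is built around attaching vertices onto $2$-cliques. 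The genuine difficulty is therefore not an impossible $k=3$ step but the subtler problem you gesture at earlier: iterating the $k=2$ (``crystal'') step across an arbitrary $2$-tree while keeping control of even holes, clique number, and the accumulated attachment neighborhoods -- which is exactly what the paper's Theorems~\ref{thm:maincone1} and \ref{thm:maincrystal1} partially accomplish, and what Conjecture~\ref{conj:main} asks for in full.
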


In other words, Conjecture~\ref{conj:mainiff} asserts that, as opposed to their global complexity, layered wheels are quite canonical from a local point of view, in that they represent the local structure of \textit{all} even-hole-free graphs of large treewidth and small clique number.

Once again, the ``only if'' implication in Conjecture~\ref{conj:mainiff} follows from Theorem~\ref{thm:layeredwheels}. Our main question is whether the ``if'' implication is true (which is identical to Conjecture~\ref{conj:getgo} we started the paper with):

\begin{conjecture}\label{conj:main}
For every integer $t\geq 4$ and every $2$-forest $H$,  (even-hole, $H$, $K_t$)-free graphs have bounded treewidth.
\end{conjecture}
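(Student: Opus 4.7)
The plan is to attack Conjecture~\ref{conj:main} by induction on $|V(H)|$, using the simplicial-vertex structure of $K_4$-free chordal graphs. Every $2$-forest $H$ on at least two vertices has a simplicial vertex $v$, and the $K_4$-freeness of $H$ forces $N_H(v)$ to be either a single vertex $u$ or a single edge $uw$. Setting $H' = H \setminus v$, we again get a $2$-forest, with $|V(H')| = |V(H)| - 1$. The base case to seed the induction is provided by the partial result for $H = \cone(F)$ with $F$ a forest~\cite{twxi}: whenever $H$ has a universal vertex, we are done. A full solution also requires an independent treatment of the small $2$-forests without a universal vertex---most notably $P_4$---to get the induction off the ground.

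For the inductive step, let $f(H, t)$ denote the supremum of treewidths of (even-hole, $H$, $K_t$)-free graphs, and suppose for contradiction that $f(H, t) = \infty$. Pick an (even-hole, $H$, $K_t$)-free graph $G$ with treewidth much larger than $f(H', t)$, which is finite by the inductive hypothesis. A greedy removal argument---successively deleting the vertex set of a copy of $H'$ from the current graph, with each deletion dropping the treewidth by at most $|V(H')|$---produces many pairwise vertex-disjoint copies of $H'$ inside $G$; otherwise, the leftover would be (even-hole, $H'$, $K_t$)-free with treewidth still exceeding $f(H', t)$, a contradiction. The goal is then to promote some copy $H'_0$ of $H'$ to a copy of $H$ by locating a vertex $v' \in V(G) \setminus V(H'_0)$ whose neighborhood in $H'_0$ is exactly the simplicial attachment site $\{u\}$ or $\{u, w\}$, with no spurious neighbors elsewhere in $H'_0$.

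The main obstacle is this final gluing step. When the attachment site is a single vertex, the problem resembles the ``coned'' scenario and one can hope to recycle the three-path-configuration machinery underlying~\cite{twxi}. The genuinely new difficulty arises when the attachment site is an edge: one must locate a common neighbor of two specified adjacent vertices of $H'_0$, non-adjacent to every other vertex of $H'_0$, and careful enough to avoid creating an even hole or a $K_t$. To address this I would aim for a structural dichotomy for (even-hole, $K_t$)-free graphs of large treewidth, asserting that, given many vertex-disjoint copies of a fixed $2$-forest, either one copy admits the desired common neighbor at a designated edge (producing $H$), or the copies can be separated by bounded-size cutsets, forcing the treewidth to be bounded in terms of $f(H', t)$ via a balanced-separator argument. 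Establishing such a dichotomy, and in parallel verifying that the layered-wheel obstructions of~\cite{layered-wheels} themselves contain every prescribed $2$-forest (the ``if'' direction of Observation~\ref{obs:chordal}, which the paper treats as a sanity check), would constitute the crux of a complete proof. Any approach that avoids a new dichotomy of this flavor seems to bump against the same wall that keeps Theorem~\ref{thm:maincrystal1} confined to crystals.
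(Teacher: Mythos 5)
This statement is a \emph{conjecture} in the paper, not a theorem: the paper does not claim to prove Conjecture~\ref{conj:main}, and it remains open (the paper proves only two partial results, Theorems~\ref{thm:maincone1} and \ref{thm:maincrystal1}). Your proposal is honest in flagging that it does not close the argument, and the place where it stops is exactly the place where the paper also stops short: the ``gluing step.''

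That said, the proposal has a concrete structural gap worth naming. The strategy of greedily extracting many pairwise vertex-disjoint copies of $H'$ and then hoping to promote one of them by locating a simplicial attachment vertex does not, by itself, provide any control over the neighborhoods of the candidate vertices. There is nothing in the hypothesis (even-hole-, $H$-, $K_t$-freeness, large treewidth) that forces a random copy of $H'$ to sit at a location where a vertex with attachment set exactly $\{u\}$ or $\{u,w\}$ and \emph{no other} neighbors in $H'_0$ exists; the layered wheels themselves show that one must work much harder to locate copies with controlled local adjacency. The paper's own route for its partial results is to reverse this logic: instead of finding copies of $H'$ scattered in $G$ and trying to attach, it first extracts a rigid ``phantom'' structure (an approximate layered wheel, Theorem~\ref{thm:twtokalei}) in which, by construction, every edge of the current $2$-tree has $d$ pairwise-disjoint private common neighbors at the next level (\ref{P2}). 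The $2$-tree is then grown inside the phantom via an induction formally parallel to yours (remove a crystallized vertex, Theorem~\ref{thm:crystallizedvertex}, build the smaller $2$-tree, re-attach), but the re-attachment is certified by the phantom axioms rather than hoped for. Even with this machinery, the re-attachment step (Theorem~\ref{thm:doublecone}) only succeeds because the proof additionally assumes $\cone(\cone(T_{f,f}))$-freeness; without some extra hypothesis of this flavor, the candidate common neighbor of $z_1,z_2$ may have spurious neighbors and the argument breaks. So the ``dichotomy'' you propose to establish is not a technical lemma waiting to be checked; it is essentially equivalent to the conjecture and is exactly what the paper leaves open. If you want to pursue this line, the actionable reformulation from the paper is: prove that a phantom-type structure can always be extracted from any $G\in\mathcal{E}_t$ of large treewidth \emph{with an additional disjointness/anticompleteness guarantee strong enough to perform the simplicial attachment without excluding anything beyond $H$ and $K_t$}. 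That refinement of Theorem~\ref{thm:twtokalei}, not the greedy extraction of disjoint $H'$-copies, is where the difficulty lives.
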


There are only two partial results on Conjecture~\ref{conj:main}, recently established by Alecu, Chudnovsky, Spirkl and the author \cite{twxi}. The first one settles the case $t=4$:
\begin{theorem}[Alecu, Chudnovsky, Hajebi and Spirkl \cite{twxi}]\label{thm:tw11k4} For every $2$-forest $H$, every (even-hole, $K_4$)-free graph of sufficiently large treewidth contains $H$.
\end{theorem}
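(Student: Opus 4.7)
The plan is to proceed by induction on $|V(H)|$, the base cases where $H$ is a single vertex or an edge being trivial. Since $H$ is chordal, it admits a simplicial vertex $v$; since $H$ is $K_4$-free, $N_H(v)$ is a clique of size at most two. Writing $H' = H\setminus\{v\}$, the graph $H'$ is itself a $2$-forest of smaller order, so by the inductive hypothesis, every (even-hole, $K_4$, $H'$)-free graph has bounded treewidth. The task thus reduces to the following: in an (even-hole, $K_4$)-free graph $G$ of sufficiently large treewidth, one must find an induced copy of $H'$ together with a ``private'' vertex $w$ of $G$ whose neighborhood in that copy is exactly the image of $N_H(v)$. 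Such a $w$ promotes the copy of $H'$ to an induced copy of $H$, contradicting $H$-freeness.

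To carry this out, I would first amplify the inductive hypothesis to the following statement: for every $k$, an (even-hole, $K_4$)-free graph of sufficiently large treewidth contains $k$ pairwise vertex-disjoint and well-separated induced copies of $H'$. This follows from the inductive hypothesis combined with the standard observation that removing a bounded-size induced subgraph changes the treewidth by at most a bounded amount, followed by a Ramsey-type pruning to enforce large pairwise distance between the copies.

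Next, I would invoke the Grid Theorem (Theorem~\ref{thm:gridthm}), or preferably an induced-subgraph analogue tailored to the even-hole-free setting, to produce a richly connected substructure threading through the many copies of $H'$. Since $G$ is even-hole-free, neither a theta nor a prism can occur as an induced subgraph; this severely constrains how a single vertex $w$ of $G$ can attach to a given copy of $H'$. Intuitively, the only way for $w$ to have the right pattern of neighbours inside some copy of $H'$ while avoiding an even hole and a $K_4$ is for $w$ to see exactly the image of $N_H(v)$, producing the desired induced $H$. Turning this intuition into an argument requires a case analysis on $|N_H(v)|\in\{0,1,2\}$, together with control over the neighbours of $w$ in the connecting substructure.

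The principal obstacle is the final \emph{clean attachment} step: among the many candidate vertices arising from the rich connectivity, one must show that even-hole-freeness and $K_4$-freeness jointly force at least one of them to attach to some copy of $H'$ exactly along the image of $N_H(v)$ and nowhere else. This is where the structural toolbox for even-hole-free graphs (wheels, three-path configurations, star cutsets, and the interplay between long induced paths and additional vertices) must be deployed. I expect the technical heart of the argument to consist in managing these local interactions carefully enough that, as $k$ grows with the treewidth of $G$, a pigeonhole-plus-structure argument yields a genuine copy of $H$ in $G$.
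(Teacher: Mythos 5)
Your high-level shape --- induction on $|V(H)|$ by peeling off a simplicial vertex and re-attaching it --- matches the spirit of the paper's argument (Theorem~\ref{thm:strongmaincone2} inducts on the size of the $2$-tree $\nabla$ via a crystallized vertex). But you correctly flag the ``clean attachment'' step as the obstacle, and the machinery you propose for it does not deliver. Producing many \emph{well-separated} disjoint copies of $H'$ points in exactly the wrong direction: if the copies sit far apart, there is no reason a single candidate vertex $w$ should see any one of them in the precise pattern required, and even-hole-/K$_4$-freeness gives you nothing to bite on in such a sparse arrangement. What you actually need is for a copy of $H'$ to come packaged with a reservoir of \emph{private common neighbours for each of its edges} --- the opposite of separation. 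Likewise, invoking the Grid Theorem is of no help in the induced setting: the subdivided wall it yields is merely a subgraph and there is no mechanism making it ``thread through'' copies of $H'$ found independently. So both amplification steps produce structure that is not adapted to the extension you want, and the pigeonhole argument never starts.

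The paper resolves this by changing what is carried along the induction. Instead of finding a bare copy of $\nabla'$ and hoping to extend it, Theorem~\ref{thm:strongmaincone2} produces a copy of $\nabla'$ \emph{together with a residual phantom} rooted at it: a layered structure (Section~\ref{sec:phantomintro}, condition~\ref{P2}) in which every edge of the current layer has $d$ private common neighbours in the next layer, nested for $r$ more rounds. This is precisely the reservoir your plan lacks. A crystallized vertex $z$ of $\nabla$ (Theorem~\ref{thm:crystallizedvertex}) sits on a $2$-clique $\{z_1,z_2\}$, and the sets $\Gamma_1(z_1z),\Gamma_1(z_2z)$ already supply the candidates; Theorem~\ref{thm:doublecone} then uses $K_{2,2}$- and $K_t$-freeness to extract from them the new simplicial vertices $S_1\cup S_2$ anticomplete to the previously placed vertices (or else finds a $\cone(\cone(T_{f,f}))$ subgraph, which for Theorem~\ref{thm:tw11k4} with $t=4$ is ruled out since that graph contains $K_4$). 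Producing the phantom in the first place (Theorem~\ref{thm:twtokalei}, via the kaleidoscope machinery of Section~\ref{sec:conjuring}) is where even-hole-freeness does its structural work --- through theta, prism and even-wheel exclusion, notably Theorem~\ref{thm:evenwheeltheta} --- rather than in an a posteriori attachment analysis. Without a device of this kind to transport ``extendability'' through the induction, your scheme cannot close.
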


Incidentally, Theorem~\ref{thm:tw11k4} contains a conjecture of Sintiari and Trotignon \cite{layered-wheels} as a (very) special case: when $H$ is obtained from a two-edge path by adding a universal vertex. The latter graph $H$ is commonly known as the \textit{diamond}. In general, given a graph $F$, we denote by $\cone(F)$ the graph obtained from $F$ by adding a universal vertex (see Figure~\ref{fig:2-treeassorted}).

\begin{figure}
    \centering
    \includegraphics[scale=0.6]{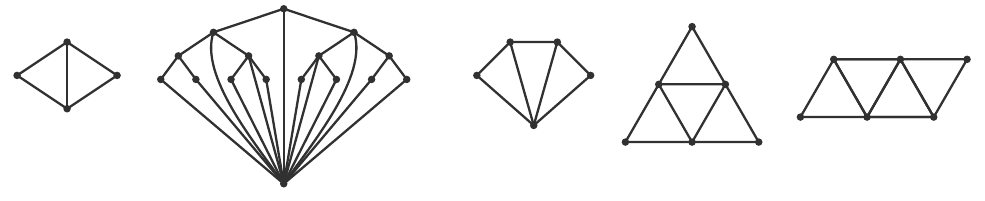}
\caption{From left to right: the diamond, a coned tree, the gem, and the two smallest  ($2$-connected) $2$-forests that are not coned forests (the last one but one is commonly known as the \textit{antinet}).}
    \label{fig:2-treeassorted}
\end{figure}

The second result in \cite{twxi}, incomparable to Theorem~\ref{thm:tw11k4}, is another strengthening of Sintiari and Trotignon's conjecture, that Conjecture~\ref{conj:main} holds if $H=\cone(F)$ for a forest $F$. Note that this is a characterization of forests, because $\cone(F)$ is a $2$-forest if and only if $F$ is a forest:
\begin{theorem}[Alecu, Chudnovsky, Hajebi and Spirkl \cite{twxi}]\label{thm:tw11conedforest} For every integer $t\geq 4$ and every forest $F$, (even-hole, $\cone(F), K_t$)-free graphs have bounded treewidth.
\end{theorem}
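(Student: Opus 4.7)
My plan is to induct on $t \geq 4$. The base case $t = 4$ is an immediate consequence of Theorem~\ref{thm:tw11k4}: whenever $F$ is a forest, $\cone(F)$ is chordal and $K_4$-free, i.e., a $2$-forest, so Theorem~\ref{thm:tw11k4} applies directly with $H = \cone(F)$.

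For the inductive step, assume the bound holds for $t - 1 \geq 4$ and let $G$ be an (even-hole, $\cone(F)$, $K_t$)-free graph. The pivotal observation is that the $\cone(F)$-free hypothesis forces $G[N(v)]$ to be $F$-free for every vertex $v$; combined with $K_t$-freeness this gives that $G[N(v)]$ is (even-hole, $F$, $K_{t-1}$)-free. The induction hypothesis applies to any $K_{t-1}$-free induced subgraph of $G$, so the plan is to leverage this either to locate a bounded balanced separator in $G$ (enabling recursion) or to force a substructure that contradicts the $\cone(F)$-free hypothesis.

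Concretely, if $\tw(G)$ is very large, I would appeal to the induced-grid-type results for even-hole-free graphs of bounded clique number (in the spirit of work by Abrishami, Chudnovsky, and collaborators) to extract inside $G$ a large configuration of a specific, highly restricted shape — for instance a long ``pyramid'' or a ``wheel'' whose apex $v$ has a neighborhood rich enough to contain an induced copy of $F$. Such a copy of $F$ inside $G[N(v)]$ would immediately yield $\cone(F) \subseteq G$, the desired contradiction. The inductive hypothesis for $t-1$ would be called on whenever one can peel off a bounded interface from $G$ that produces subgraphs with strictly smaller clique number.

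The hard part will be extracting the induced copy of $F$ inside a \emph{single} neighborhood $G[N(v)]$. Standard Ramsey-type arguments produce induced trees somewhere in $G$, but we must pin the copy of $F$ down to one vertex's neighborhood; this requires a careful exploitation of even-hole-freeness, presumably by iterating over ``layers'' around a central vertex and using the absence of even cycles to regulate how paths in $G$ interact with the ball around it. A secondary challenge is the book-keeping needed to convert the local contradictions and separators obtained along the way into a single tree decomposition of $G$ of uniformly bounded width.
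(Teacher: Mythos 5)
Your proposal rests on an induction on $t$ whose inductive step does not go through. The observation that $\cone(F)$-freeness forces $G[N(v)]$ to be $F$-free, hence (even-hole, $\cone(F)$, $K_{t-1}$)-free, is correct, and by the inductive hypothesis each such neighborhood has bounded treewidth. But bounded treewidth of every neighborhood does not imply bounded treewidth of $G$: the $n\times n$ grid already has all vertex neighborhoods of constant size, yet has unbounded treewidth. So you cannot ``recurse'' or ``peel off a bounded interface'' from local treewidth bounds alone; this is the entire difficulty of the problem, not a step that can be deferred. Reducing general $t$ to $t=4$ (or $t-1$) is exactly what the paper's Conjectures~\ref{conj:ehfk4} and \ref{conj:generalk4} formalize, and the paper explicitly treats such a reduction as open (and \ref{conj:generalk4} has since been refuted).

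The remaining part of your plan --- force an induced copy of $F$ to appear inside a single neighborhood $G[N(v)]$ by ``iterating over layers'' and ``appealing to induced-grid-type results'' --- is the right intuition, but as stated it is a research programme, not an argument. The paper's proof (both in \cite{twxi} and via Theorem~\ref{thm:maincone1} here, which strictly implies the statement) does not induct on $t$ at all: it develops the notions of a \emph{kaleidoscope} and a vertex set being \emph{mirrored} by one (Theorems~\ref{thm:kaleidoscopeexists} and \ref{thm:difftodiff}), the \emph{contraption} lemma (Theorem~\ref{thm:<3}), and then, in the present paper, the \emph{phantom} machinery (Theorem~\ref{thm:twtokalei} through Theorem~\ref{thm:phantomtodoublecone}); these concrete structural tools are what allow one to iteratively grow a copy of $\cone(F)$ (in fact $\cone(\cone(T_{f,f}))$) inside an even-hole-free graph of large treewidth. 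Your proposal does not identify, construct, or substitute for any of this machinery, so the crucial step you yourself flag as ``the hard part'' is genuinely missing.
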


Our first result in this paper is a common strengthening of Theorems~\ref{thm:tw11k4} and \ref{thm:tw11conedforest} (the proof of which does not rely on those theorems).

\begin{theorem}\label{thm:maincone1}
   Let $t\geq 1$ be an integer, let $F$ be a forest and let $H$ be a $2$-forest. Then (even-hole, $\cone(\cone (F))$, $H$, $K_t$)-free graphs have bounded treewidth. 
\end{theorem}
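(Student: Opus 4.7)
The argument reduces to the case $t\geq 5$. Indeed, (even-hole, $K_3$)-free graphs have treewidth at most $5$ \cite{evenholetrianglefree}, which handles $t\leq 3$. For $t=4$, Theorem~\ref{thm:tw11k4} already bounds the treewidth of (even-hole, $K_4$, $H$)-free graphs, a superclass of the (even-hole, $\cone(\cone(F))$, $H$, $K_4$)-free graphs in question, so the conclusion follows without invoking $\cone(\cone(F))$-freeness at all.

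For $t\geq 5$, my first step would be to pass from $\cone(\cone(F))$-freeness to $\cone(F)$-freeness at the level of neighborhoods. For every vertex $v$ of our graph $G$, the induced subgraph $G[N(v)]$ is (even-hole, $\cone(F)$, $K_{t-1}$)-free: a copy of $\cone(F)$ inside $N(v)$ would extend via $v$ to a copy of $\cone(\cone(F))$ in $G$, and a $K_{t-1}$ in $N(v)$ would extend to a $K_t$. Theorem~\ref{thm:tw11conedforest} therefore bounds $\tw(G[N(v)])$ by a constant $c=c(F,t)$, giving $G$ \emph{bounded local treewidth}: every closed $1$-ball has treewidth at most $c+1$.

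The heart of the matter is to upgrade this local bound to a global bound on $\tw(G)$, using even-hole-freeness, $K_t$-freeness, and $H$-freeness. My plan here is to induct on the structure of the $2$-forest $H$ via a simplicial vertex $w$ of $H$, splitting into the three cases where $N_H(w)$ is empty, a single vertex, or an edge (the only possibilities since $H$ is $K_4$-free chordal). The aim in each case is to show that if $\tw(G)$ is sufficiently large, then the bounded local treewidth — together with standard even-hole-free connectivity tools — forces many induced copies of $H-w$ in $G$ to share a common extension vertex playing the role of $w$, contradicting $H$-freeness.

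The main obstacle I anticipate is this extension step. In contrast to a coned forest, which sits entirely inside a single neighborhood and is therefore pinned down by a single application of Theorem~\ref{thm:tw11conedforest}, a general $2$-forest need not live in any one neighborhood, so the attachment of $w$ to $H-w$ must be recovered more globally. I expect this step to require an induced-Menger-type argument for even-hole-free graphs, adapted to the specific attachment patterns of a simplicial extension, leaning on the structural toolkit of \cite{twxi} for the coned-forest setting as a scaffold — while genuinely new work is needed to bridge between one universal vertex (as in $\cone(F)$) and a simplicial vertex of an arbitrary $2$-forest.
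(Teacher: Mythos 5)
Your proposal diverges from the paper's approach and, as you yourself acknowledge, leaves the crucial step unresolved. The paper explicitly states that its proof of Theorem~\ref{thm:maincone1} does \emph{not} rely on Theorems~\ref{thm:tw11k4} or~\ref{thm:tw11conedforest}; instead, it develops a self-contained machinery of ``phantoms'' (approximate layered wheels, Section~\ref{sec:phantomintro}) together with a notion of ``crystallized vertices'' in $2$-trees (Theorem~\ref{thm:crystallizedvertex}). Your plan, by contrast, invokes Theorem~\ref{thm:tw11conedforest} applied to neighborhoods to obtain bounded local treewidth, and then gestures at an ``induced-Menger-type argument'' that you do not carry out.

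Your preliminary observation is correct: if $G$ is $\cone(\cone(F))$-free and $K_t$-free, then each $G[N(v)]$ is $(\mathrm{even\text{-}hole},\cone(F),K_{t-1})$-free and hence has bounded treewidth. But this cannot be the main leverage. The layered wheels of Sintiari and Trotignon \cite{layered-wheels} are $(\mathrm{even\text{-}hole},K_4)$-free, so \emph{their} neighborhoods are $(\mathrm{even\text{-}hole},K_3)$-free and have treewidth at most $5$ by \cite{evenholetrianglefree}; yet the layered wheels have unbounded treewidth. Thus ``even-hole-free $+$ $K_t$-free $+$ bounded local treewidth'' does not imply bounded treewidth, and the entire burden falls on the $H$-freeness hypothesis. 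That is exactly where your argument stops: the step of pinning down, from bounded local treewidth and $H$-freeness, the global attachment of a simplicial vertex of $H$ is the content that has to be proved, and it is where the paper's genuine work lives (Theorems~\ref{thm:twtokalei}, \ref{thm:doublecone}, \ref{thm:phantomtodoublecone}, and the induction in the proof of Theorem~\ref{thm:maincone2}). Your outline is closer in spirit to the paper than it might appear --- the induction on a simplicial vertex is the right idea, and the paper does it with crystallized vertices --- but the ``scaffold'' you hope to lean on is not a scaffold for this theorem: the paper had to build a new structure (the phantom) in which the simplicial extension step can actually be executed, essentially because a $2$-forest need not sit inside a single closed neighborhood, and a double cone of a tree must simultaneously be either found as a subgraph or ruled out. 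Without something playing the role of the phantom, your extension step is a genuine gap, not a routine adaptation of \cite{twxi}.
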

We remark that for each $t\geq 4$, in order for Theorem~\ref{thm:maincone1} to hold true, $F$ must be a forest and $H$ must be a $2$-forest.

As a different approach, one may also improve on each of Theorems~\ref{thm:tw11k4} and \ref{thm:tw11conedforest} separately. The one we do have something to contribute to is extending Theorem~\ref{thm:tw11conedforest} beyond coned forests. Said differently, we wish to identify more graphs $H$ for which Conjecture~\ref{conj:main} holds for all $t\geq 4$. To begin with, it is straightforward to check that among $2$-forests which are not coned forests, there are only two (up to isomorphism and $2$-connectivity) with the fewest number of vertices.

Both these graphs are obtained from a coned $K_2$, that is, a triangle, and a coned three-edge path, commonly known as a \textit{gem} (which is annoyingly more similar to a ``diamond'' than the diamond), by gluing them together along the unique edge of $K_2$ and some edge of the three-edge path (See Figure~\ref{fig:2-treeassorted}).

We prove that for all $t\geq 4$, Conjecture~\ref{conj:main} holds for both of these graphs as a choice of $H$. To be more accurate, we prove Theorem~\ref{thm:maincrystal1} below which is a much stronger result. A \textit{double star} is a tree with exactly two vertices of degree more than one (which are necessarily adjacent). The \textit{middle edge} of a double star is the unique edge between the two vertices of degree more than one. By a \textit{crystal} we mean a graph obtained from $\cone(F_1),\ldots, \cone(F_k)$ for some integer $k\geq 1$ and some choice of $k$ double stars $F_1,\ldots, F_k$, by identifying the middle edges of $F_1,\ldots, F_k$ (see Figure~\ref{fig:crystal}).

Our second result in this paper is the following:
\begin{theorem}\label{thm:maincrystal1}
   Let $t\geq 1$ be an integer and let $H$ be a crystal. Then, (even-hole, $H$, $K_t$)-free graphs have bounded treewidth. 
\end{theorem}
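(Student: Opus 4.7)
My plan is based on reducing to Theorem~\ref{thm:tw11conedforest}. First, the case $t\leq 3$ is handled by \cite{evenholetrianglefree}. Next, every crystal is itself a $2$-forest: each coned double star $\cone(F_i)$ is $K_4$-free chordal, and clique-sums along a $K_2$ preserve both properties, so the crystal, built as the clique-sum of the $\cone(F_i)$'s along the common middle edge $c_1c_2$, is again $K_4$-free chordal. Hence the case $t=4$ follows immediately from Theorem~\ref{thm:tw11k4}. Moreover, when $k=1$ the crystal is the coned forest $\cone(F_1)$ and Theorem~\ref{thm:tw11conedforest} applies directly. So the remaining work is for $t\geq 5$ and $k\geq 2$.

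For the main range I would induct on $k$. It is convenient to replace every $F_i$ by a single double star $D$ that contains each $F_i$ as a subtree, and to work with the \emph{uniform} crystal $H^\ast$ built from $k$ copies of $\cone(D)$ identified along their middle edges; since $H$ is an induced subgraph of $H^\ast$, finding $H^\ast$ in $G$ suffices. The inductive step then reduces to the following core statement: in any (even-hole, $K_t$)-free graph $G$ of sufficiently large treewidth, there exist $k$ induced copies of $\cone(D)$ in $G$ whose pairwise intersection is exactly a common middle edge $c_1c_2$. I would try to prove this by iteration. Starting from $j<k$ copies already sharing $c_1c_2$, let $X$ be the (bounded-size) union of their vertex sets and set $G'=G\setminus (X\setminus\{c_1,c_2\})$; then $\tw(G')\geq \tw(G)-O(1)$, so $G'$ still has very large treewidth, and Theorem~\ref{thm:tw11conedforest} produces a fresh induced copy of $\cone(D)$ inside $G'$.

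The main obstacle is that the middle edge of this new copy need not coincide with $c_1c_2$, and forcing it to do so is the technical heart of the argument. I see two possible routes. The first is to strengthen Theorem~\ref{thm:tw11conedforest} to an ``edge-rooted'' variant: given adjacent vertices $c_1,c_2$ in a (even-hole, $K_t$)-free graph that has large treewidth locally around $c_1c_2$ (for instance, such that deleting a bounded star cutset at $\{c_1,c_2\}$ leaves a component of large treewidth), one can find a copy of $\cone(D)$ whose middle edge is exactly $c_1c_2$. The second is a pigeonhole/Ramsey-style route: iterate Theorem~\ref{thm:tw11conedforest} to produce a huge family of induced copies of $\cone(D)$ and then extract many of them with a common middle edge, which requires quantitative control on the number of distinct middle edges that can arise in a large-treewidth, (even-hole, $K_t$)-free graph. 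Either route hinges on a delicate analysis of the common neighborhood of a fixed edge in such graphs, which, in light of the layered-wheel examples described in Theorem~\ref{thm:layeredwheels}, is where the subtlety of the proof resides.
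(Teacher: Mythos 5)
Your preliminary reductions are sound: a crystal is indeed a $2$-forest (each $\cone(F_i)$ is a $K_4$-free chordal graph, and a clique-sum along a $K_2$ preserves both properties), so the case $t\le 3$ is handled by \cite{evenholetrianglefree}, the case $t=4$ by Theorem~\ref{thm:tw11k4}, and the case $k=1$ by Theorem~\ref{thm:tw11conedforest}. The problem is that the step you yourself flag as ``the technical heart'' is genuinely missing, and neither of your two proposed routes is substantiated. Theorem~\ref{thm:tw11conedforest} gives no control whatsoever over where the middle edge of the copy of $\cone(D)$ lands, so the delete-and-repeat iteration never concentrates around a fixed $c_1c_2$. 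The ``edge-rooted'' strengthening you posit --- that any edge $c_1c_2$ with large treewidth nearby carries a $\cone(D)$ with middle edge exactly $c_1c_2$ --- is not proved anywhere and is not obviously true; in particular the known coned-forest theorem does not admit such a prescribed-root version. The pigeonhole route is worse off: after $N$ iterations you have $N$ copies but potentially $N$ distinct middle edges, and there is no bound on the number of candidate middle edges in terms of anything you control, so no Ramsey-type concentration follows. Finally, even if you did obtain $k$ copies pairwise intersecting exactly in a common middle edge, the union need not be an induced crystal; you would still need a cleanup step to kill extra edges between the different copies.

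The paper's route is different in kind and does not iterate Theorem~\ref{thm:tw11conedforest}. From large treewidth it first produces (Theorem~\ref{thm:twtokalei}) a \emph{phantom}: a nested sequence $Z_0\subseteq Z_1\subseteq\cdots\subseteq Z_r$ rooted at a $2$-clique $Z_0=\{z_1,z_2\}$ in which every edge of every layer acquires many private common neighbors in the next layer. This is built using the kaleidoscope/mirroring machinery of \cite{twxi} (Theorems~\ref{thm:kaleidoscopeexists} and \ref{thm:difftodiff}) together with Theorem~\ref{thm:evenwheeltheta}, and crucially the edge $z_1z_2$ is produced existentially by that machinery rather than prescribed in advance. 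A separate induction on the depth of the phantom (Theorem~\ref{thm:phantomtocrystal}) then yields a dichotomy at each level: either a $(z_1,z_2,f,g)$-crystal tuple emerges, or one builds successively larger cliques complete to $Z_0$; since $G$ is $K_t$-free the latter cannot continue forever, so a crystal tuple must appear. A final Ramsey-type cleanup (Theorem~\ref{thm:clearingacrystal}) converts the crystal tuple into a genuine induced crystal. So the role you intend for an edge-rooted coned-forest theorem is played in the paper by the much richer phantom structure, and the prescribed-edge statement your plan rests on is not delivered by the existing results.
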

\begin{figure}
    \centering
    \includegraphics[scale=0.5]{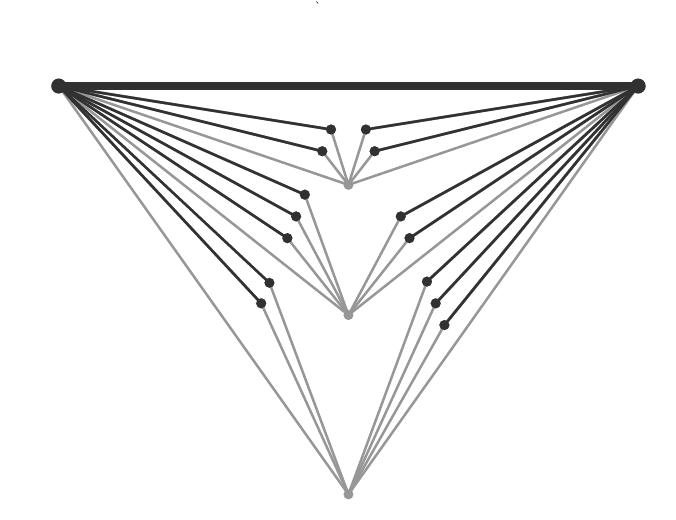}
    \caption{A crystal.}
    \label{fig:crystal}
\end{figure}
\subsection{The connection with sparse obstructions}\label{sub:2-deg}
Another approach would be to generalize Theorem~\ref{thm:tw11k4} from (even-hole, $K_4$)-free graphs to (even-hole, $K_t$)-free graphs for some large values of $t$. We do not have much to say about this, except it appears that we are somehow bound to prove Conjecture~\ref{conj:main} for all values of $t>4$ at once. This leads to the following conjecture, which, if true, will imply Conjecture~\ref{conj:main} by reducing it to Theorem~\ref{thm:tw11k4}:

\begin{conjecture}\label{conj:ehfk4}
    For every $t\geq 1$, every even-hole-free graph of large enough treewidth has an induced subgraph of treewidth $t$ which is either complete or $K_4$-free.
\end{conjecture}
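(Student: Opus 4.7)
The plan is to induct on $t$. The cases $t\leq 3$ are immediate: since treewidth drops by at most one upon deleting a single vertex, any graph of treewidth $\geq t$ has an induced subgraph of treewidth exactly $t$. For $t=1$ any edge suffices; for $t=2$ either $G$ contains a triangle (a complete induced subgraph of treewidth $2$) or $G$ is triangle-free and hence $K_4$-free; for $t=3$ either $G$ contains $K_4$ or $G$ is itself $K_4$-free. In every case the desired induced subgraph is readily extracted.

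Assume from now on that $t\geq 4$, and suppose $G$ is an even-hole-free graph of treewidth at least some large function $f(t)$. If $G$ contains $K_{t+1}$ this is a complete induced subgraph of treewidth $t$ and we are done. So we may assume $G$ is $K_{t+1}$-free, and the goal becomes producing a $K_4$-free induced subgraph of treewidth at least $t$. Let $X\subseteq V(G)$ be the set of vertices belonging to some $K_4$ of $G$. Since $G\setminus X$ is $K_4$-free, it suffices to prove $\tw(G\setminus X)\geq t$, or at least to iteratively prune $X$ while preserving large treewidth in the remainder.

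To execute this, I would exploit the well-known restrictions on how a triangle sits inside an even-hole-free graph: any vertex adjacent to two or more vertices of a triangle $T$ must attach to $T$ in tightly controlled patterns, since otherwise one immediately produces a theta, prism or pyramid, and hence an even hole. Combined with $K_{t+1}$-freeness, this should force $X$ to be organized as a bounded union of neighborhoods of large cliques, strung along a low-complexity ``skeleton'' of $G$. I would then aim to locate, inside a tree-decomposition witnessing the large treewidth of $G$, a sub-region of large treewidth that avoids this skeleton and in particular avoids $X$. Balanced-separator arguments specialized to the even-hole-free setting, in the spirit of those developed in \cite{twxi}, should then deliver a $K_4$-free induced subgraph of treewidth at least $t$.

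The main obstacle I anticipate is precisely this separator step: establishing that the removal of $X$, or of a suitable vertex cover of the $K_4$'s of $G$, destroys only a bounded amount of treewidth. No such statement holds for arbitrary graphs, so the argument must genuinely rely on the even-hole-free hypothesis, most likely through a decomposition theorem (star cutsets, $2$-joins or clique cutsets). A secondary difficulty arises when $G[X]$ itself has large treewidth: in that event one has to iterate the analysis inside $G[X]$ while tracking the ``$K_{t+1}$-freeness budget'' across iterations, which is where most of the technical bookkeeping is likely to live.
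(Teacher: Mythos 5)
Conjecture~\ref{conj:ehfk4} is a \emph{conjecture} in this paper, not a theorem; the Note at the end of Section~\ref{sec:intro} records explicitly that it remains open. So there is no proof of the paper's to compare against, and what you have written is not a proof either: it is a research plan, and the two difficulties you yourself flag in the final paragraphs are not ``technical bookkeeping'' but the entire substance of the problem.

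On the substance of the plan: the opening reduction is fine (for $t\le 3$ the statement is essentially trivial by pruning $G$ down to treewidth exactly $t$, and for $t\ge 4$ one may assume $G$ is $K_{t+1}$-free and then needs a $K_4$-free induced subgraph of treewidth at least $t$). The trouble is the choice to delete $X$, the set of vertices lying in some $K_4$. Nothing you cite, and nothing known, rules out even-hole-free graphs of bounded clique number and unbounded treewidth in which $X$ carries all of the treewidth, or indeed in which $X=V(G)$ outright; your ``secondary difficulty'' (that $G[X]$ may itself have large treewidth) should be treated as the generic case, not an exception. More to the point, the conjecture does not ask that $G\setminus X$ retain large treewidth: the desired $K_4$-free induced subgraph is free to use vertices that lie in $K_4$'s of $G$, so long as no four of the \emph{chosen} vertices form a $K_4$ among themselves. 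A successful argument must therefore thread a $K_4$-free configuration \emph{through} the dense part of $G$, not excise the dense part and hope something of large treewidth is left behind. The decomposition machinery you gesture at (star cutsets, $2$-joins, clique cutsets) has not been shown to deliver the balanced-separator statement that step requires, and that is precisely why the conjecture is still open. Present this as a direction of attack, not as a proof.
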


Furthermore, apart from the layered wheels and the basic obstructions, there are now many other constructions of graphs with arbitrarily large treewidth \cite{deathstar, Davies2}, and they all happen to be $K_4$-free. As far as we can tell, Conjecture~\ref{conj:ehfk4} may also be true outside the class of even-hole-free graphs: 

\begin{conjecture}\label{conj:generalk4}
    For every $t\geq 1$, every graph of large enough treewidth has an induced subgraph of treewidth $t$ which is either complete or $K_4$-free.
\end{conjecture}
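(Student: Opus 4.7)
The plan is to reduce Conjecture~\ref{conj:generalk4} to a clique-number reduction step and attack that step directly. Two extreme cases are immediate. If $G$ has sufficiently large treewidth and contains $K_{t+1}$ as an induced subgraph, then $K_{t+1}$ itself is the required complete induced subgraph of treewidth $t$. If, on the other hand, $G$ is already $K_4$-free, then since removing a single vertex decreases treewidth by at most one, we can extract an induced subgraph of treewidth exactly $t$ by iterated deletion. Thus the heart of the problem is the case where $G$ is $K_{t+1}$-free with clique number in $\{4,5,\ldots,t\}$, and the target is a $K_4$-free induced subgraph of treewidth $t$.

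I would handle this intermediate regime by induction on the clique number. Concretely, the goal would be to prove the following ``one-step'' reduction: for every $s\in\{4,\ldots,t\}$, every graph $G$ with $\omega(G)\leq s$ and $\tw(G)$ sufficiently large (depending on $s$ and $t$) contains an induced subgraph $G'$ with $\omega(G')\leq s-1$ and $\tw(G')\geq t$. Iterating this step $t-3$ times, starting from $s=t$ and descending to $s=4$, and then invoking the vertex-deletion remark once more, yields the desired $K_4$-free induced subgraph of treewidth exactly $t$.

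For the clique-number reduction step itself, the natural attack is via balanced separators of size at most $\tw(G)+1$. One would try to arrange, by descending into components of successive separations and building a union $Y$ of separator-sets encountered, that \emph{every} copy of $K_s$ in $G$ meets $Y$, while $G\setminus Y$ still has treewidth at least $t$. The first condition would force $\omega(G\setminus Y)\leq s-1$; the second would require a careful control of the ``charge'' each $K_s$ accumulates against successive separators, presumably via a Ramsey-type selection argument among the $K_{s-1}$-cliques of $G$, in the spirit of the ``grid versus clique'' dichotomies used in the proofs of Theorems~\ref{thm:tw11k4} and \ref{thm:tw11conedforest}.

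The main obstacle, and the reason the conjecture is still open, is precisely that layered wheels and the more recent constructions of \cite{layered-wheels, deathstar, Davies2} show how large treewidth can persist under extremely restrictive sparsity conditions; all of them happen to be $K_4$-free, which is perfectly consistent with Conjecture~\ref{conj:generalk4} but offers no witness for the extraction step above. Any successful proof would need to produce a quantitative dichotomy between the presence of $K_4$ and the existence of a $K_4$-free induced subgraph of large treewidth, a dichotomy for which, despite recent progress toward the induced-subgraph variant of the Grid Theorem (Theorem~\ref{thm:gridthm}), no unified combinatorial framework is currently available.
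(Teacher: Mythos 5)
The statement you are addressing is a conjecture in the paper (Conjecture~\ref{conj:generalk4}), not a theorem, so the paper contains no proof to compare your attempt against. More importantly, the paper itself records, both in the abstract and in the Note closing Subsection~\ref{sub:2-deg}, that this conjecture has since been \emph{disproved} by Chudnovsky and Trotignon via a new variant of the layered wheels. Your closing paragraph describes the conjecture as ``still open''; it is not, it is false, and consequently no proof strategy can succeed.

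On the content of your sketch: the two easy cases are handled correctly. A $K_{t+1}$ provides the required complete induced subgraph of treewidth $t$, and in a $K_4$-free graph of treewidth at least $t$, iterated vertex deletion produces an induced subgraph of treewidth exactly $t$, because deleting one vertex lowers treewidth by at most one. But the clique-number reduction step on which everything else rests, namely that every graph of clique number at most $s$ and sufficiently large treewidth contains an induced subgraph of clique number at most $s-1$ and treewidth at least $t$, cannot be true. Applied iteratively from $s=\omega(G)\leq t$ down to $s=4$, it would deliver exactly the conclusion of Conjecture~\ref{conj:generalk4}, and that conclusion is known to fail. The Chudnovsky--Trotignon construction is precisely a witness that no union of small balanced-separator sets can simultaneously hit every $K_s$ and leave a large-treewidth remainder, so the charging heuristic you gesture at has no mechanism to make it work. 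As a secondary technical point, even if the one-step reduction held, your iteration would need each intermediate graph to retain treewidth ``sufficiently large for the next step,'' not merely at least $t$, so the quantifiers in your outline would need tightening in any case.
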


 There might be even more to say: among the basic obstructions and the non-basic ones that have been discovered so far \cite{deathstar,Davies2,layered-wheels}, apart from complete graphs and complete bipartite graphs, the rest are all $2$-degenerate (recall that a graph $G$ is \textit{$d$-degenerate} for an integer $d\geq 0$ if every induced subgraph of $G$ has a vertex of degree at most $d$). We point out that this is as radical of a sparsity dichotomy as possible because $1$-degenerate graphs are forests, which have treewidth $1$.
 
 Accordingly, we propose the following strengthening of Conjecture~\ref{conj:generalk4}:

\begin{conjecture}\label{conj:2-deg}
    For every integer $t\geq 1$, every graph of large enough treewidth has an induced subgraph of treewidth $t$ which is either complete, complete bipartite, or $2$-degenerate.
\end{conjecture}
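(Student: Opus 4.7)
The plan is to attempt the conjecture via a Ramsey-type dichotomy that peels off the two dense outcomes, reducing the problem to a sparse regime, and then to manufacture a $2$-degenerate induced subgraph of treewidth $t$ in that regime using the Grid Theorem. Fix $t$, and choose $s=s(t)$ sufficiently large. Applying a Ramsey-style argument to the neighborhoods in a graph $G$ of enormous treewidth, one hopes to conclude that either $G$ contains $K_s$ as an induced subgraph (yielding a complete induced subgraph of treewidth $t$), or $G$ contains $K_{s,s}$ as an induced subgraph (yielding a complete bipartite induced subgraph of treewidth $t$), or $G$ is simultaneously $K_s$-free and $K_{s,s}$-free. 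So the heart of the matter lies in this third, sparse case.

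In the sparse case, I would invoke Theorem~\ref{thm:gridthm} to extract, as a subgraph (not induced), a subdivision of a wall $W_{N\times N}$ for $N$ much larger than $t$. The central step would then be a \emph{cleaning} argument: starting from this subdivided wall $W\subseteq G$, locate a sub-wall $W'\subseteq W$ of size still at least $t$ such that the induced subgraph $G[V(W')]$ contains no chord or cross-edge relative to $W'$. Under the $K_s$-free and $K_{s,s}$-free assumptions, one expects the unwanted edges to be sparse enough (via Kővári--Sós--Turán-type bounds on branch vertices and paths) that a suitable Ramsey or density-increment argument on the branch vertices of the wall yields such a $W'$. The resulting subgraph $G[V(W')]$ would then be an induced subdivision of a wall, hence $2$-degenerate and of treewidth at least $t$, completing the proof.

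An alternative path would be to first establish Conjecture~\ref{conj:generalk4}, passing to a $K_4$-free induced subgraph of large treewidth, and then exploit the empirical observation, recorded in Subsection~\ref{sub:2-deg}, that every currently known $K_4$-free construction of graphs of large treewidth (layered wheels, the constructions of Davies, \textit{etc.}) is already $2$-degenerate. One would try to convert this observation into a structural theorem: in the $K_4$-free world, large treewidth forces a $2$-degenerate induced subgraph of prescribed treewidth, perhaps by an induction on a tree-decomposition of small adhesion.

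The main obstacle is exactly the final cleaning step, and it is fatal: as noted in the abstract, Chudnovsky and Trotignon \href{https://arxiv.org/abs/2405.07471}{(arxiv:2405.07471)} have constructed graphs of arbitrarily large treewidth with bounded clique number and bounded biclique number in which no induced subgraph is simultaneously $2$-degenerate and of large treewidth. Hence both routes above must break down at the passage from sparse to $2$-degenerate: the proposed Ramsey/density argument on a subdivided wall cannot control chords strongly enough, and the ``$K_4$-free implies $2$-degenerate induced subgraph of large treewidth'' heuristic fails. The lesson is that $2$-degeneracy is too stringent a sparsity conclusion; a corrected version of the conjecture must either weaken $2$-degenerate to $d$-degenerate for some larger constant $d$, or enlarge the catalogue of basic obstructions in Figure~\ref{fig:basic} to absorb the Chudnovsky--Trotignon examples.
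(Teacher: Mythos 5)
You are being asked to ``prove'' something the paper never proves: Conjecture~\ref{conj:2-deg} is stated as a conjecture, and as the abstract and the ``Note'' at the end of Subsection~\ref{sub:2-deg} make explicit, it has since been \emph{refuted} by Chudnovsky and Trotignon. Your proposal correctly recognizes this at the end, which is the right conclusion; but then the bulk of what you wrote --- the Ramsey dichotomy into $K_s$, $K_{s,s}$, or sparse, followed by a wall-cleaning argument via Theorem~\ref{thm:gridthm} --- is an attempted proof of a false statement, and you even explain why each route must fail. To be precise about where the first route already collapses independently of the counterexample: extracting an \emph{induced} subdivided wall from a subdivided wall subgraph under only $(K_s,K_{s,s})$-freeness is exactly the ``induced grid theorem'' question, and it is known to be false in general (the $K_4$-free layered wheels of Theorem~\ref{thm:layeredwheels} already have large treewidth and no small induced subgraph with any hole, hence no induced subdivided wall of bounded size). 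So the cleaning step you flag as the obstacle is indeed the obstacle, but it was hopeless even before the Chudnovsky--Trotignon construction. Your second route (via Conjecture~\ref{conj:generalk4}) is also moot, since that conjecture is disproved by the same construction. The honest summary is: there is no proof to compare against; the statement is false; your closing paragraph, which identifies the counterexample and suggests weakening $2$-degenerate to $d$-degenerate or enlarging the catalogue of basic obstructions, is the correct takeaway and matches the paper's own disposition of the conjecture.
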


This leaves us with four successively stronger conjectures, namely \ref{conj:main}, \ref{conj:ehfk4}, \ref{conj:generalk4} and \ref{conj:2-deg}. We tend to believe Conjecture~\ref{conj:main}, however, we have virtually nothing to contribute to Conjecture~\ref{conj:2-deg}. A counterexample to the former or a proof of the latter would be of great interest.\\

\textbf{Note.} Since the earliest appearance of this paper, Conjecture~\ref{conj:generalk4} was disproved by Chudnovsky and Trotignon \cite{newlayeredwheel} using a new variant of the layered wheels. This refutes Conjecture~\ref{conj:2-deg} as well. Their counterexample is far from even-hole-free and contains all chordal graphs as induced subgraphs. This leaves open our main conjecture, namely Conjecture~\ref{conj:main}. Conjecture~\ref{conj:ehfk4} too remains open.

\section{$2$-trees and other preliminaries}\label{sec:2-tree}

In this section, we will reduce our main results, Theorems~\ref{thm:maincone1} and \ref{thm:maincrystal1}, to Theorems~\ref{thm:maincone2} and \ref{thm:maincrystal2}, respectively. The latter two better fit the technical circumstances of our proofs. 

We start with some definitions. Let $G$ be a graph and let $x\in V(G)$. We denote by $N_G(x)$ the set of all neighbors of $x$ in $G$, and write $N_G[x]=N_G(x)\cup \{x\}$. For an induced subgraph $H$ of $G$ (not necessarily containing $x$), we define $N_H(x)=N_G(x) \cap H$ and $N_H[x]=N_H(x)\cup \{x\}$. Also, for $X\subseteq G$, we denote by $N_G(X)$ the set of all vertices in $G\setminus X$ with at least one neighbor in $X$, and define $N_G[X]=N_G(X)\cup X$. Let $X, Y \subseteq G$ be disjoint. We say $X$ is \textit{complete} to $Y$ if every vertex in $X$ is adjacent to every vertex in $Y$ in $G$, and $X$ is \emph{anticomplete}
to $Y$ if there is no edge in $G$ with an end in $X$ and an end in $Y$. By a {\em path in $G$} we mean an induced subgraph $P$ of $G$ which is a path, and we write $P = p_1 \dd \cdots \dd p_k$ to mean $V(P) = \{p_1, \dots, p_k\}$ and $p_i$ is adjacent to $p_j$ if and only if $|i-j| = 1$. The vertices $p_1$ and $p_k$ are called the \emph{ends of $P$}, and we say that $P$ is \emph{between $p_1$ and $p_k$} or \emph{from $p_1$ to $p_k$}. The \emph{interior of $P$}, denoted by $P^*$, is the set $P \setminus \{p_1, p_k\}$. The \emph{length} of a path is its number of edges (so a path of length at most one has an empty interior).
\begin{figure}[t!]
\centering
\includegraphics[scale=0.7]{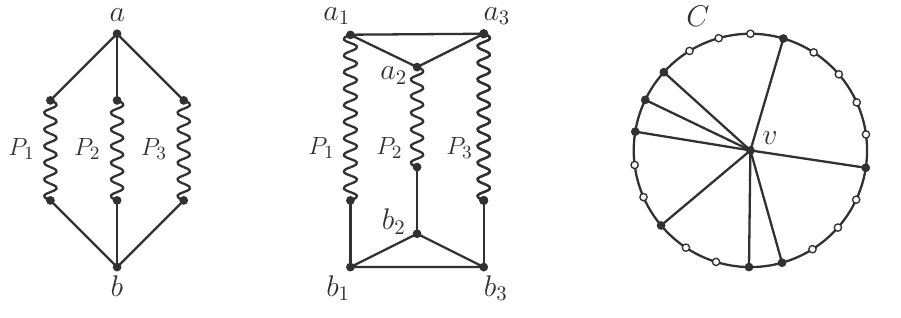}
\caption{From left to right, a theta, a prism and an even wheel. Squiggly lines represent paths of arbitrary (possibly zero) lengths.}
\label{fig:forbidden_isgs}
\end{figure}

A {\em theta} is a graph consisting of two non-adjacent vertices and three paths of length at least two between them whose interiors are pairwise disjoint and anticomplete. The line graph of a theta is called a \textit{prism}. An {\em even wheel} is a graph consisting of a cycle on four or more vertices and a vertex outside the cycle with at least four, and even number of, neighbors in the cycle (see Figure~\ref{fig:forbidden_isgs}). It is straightforward to check that the class of even-hole-free graph is contained in the class $\mathcal{E}$ of ($K_{2,2}$, theta, prism, even-wheel)-free graphs. That said, all our results are proved in the extended class $\mathcal{E}_t$ of ($K_{2,2}$, theta, prism, even-wheel, $K_t$)-free of graphs.

For an integer $n$, we write $[n]$ for the set of all positive integers no larger than $n$ (so we have $[n]=\emptyset$ if $n\leq 0$). For $c\geq 0$, by a \textit{$c$-clique} in a graph $G$ we mean a clique in $G$ of cardinality $c$. Let $k\geq 1$ be an integer. A \textit{$k$-tree} is a graph $\nabla$ which is either a $k$-vertex complete graph, or we have $|V(\nabla)|=h>k$ and there exists a bijection $\varpi_{\nabla}:[h]\rightarrow V(\nabla)$ such that for every $i\in [h-k]$, the set of neighbors of $\varpi_{\nabla}(i)$ in $V(\nabla)\setminus \varpi_{\nabla}([i])$ is a $k$-clique in $\nabla$. It is straightforward to observe that every $k$-tree is a $k$-forest (recall that a $k$-forest means a $K_{k+2}$-free chordal graph). Conversely, every $k$-forest is an induced subgraph of a $k$-tree:
\begin{theorem}[Alecu, Chudnovsky, Hajebi and Spirkl; see Theorem 1.9 in \cite{twxi}]\label{thm:k-tree}
    For every $k\geq 1$ and every $k$-forest $H$, there exists a $k$-tree $\nabla$ such that $H$ is an induced subgraph of $\nabla$.
\end{theorem}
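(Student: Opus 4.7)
My plan is to construct the $k$-tree $\nabla$ explicitly from a clique tree of $H$. Since $H$ is a $K_{k+2}$-free chordal graph, it admits a clique tree $T$ whose nodes are the maximal cliques of $H$, each of size at most $k+1$. I will first ``pad'' each maximal clique of $H$ to a $(k+1)$-clique of $\nabla$ using fresh dummy vertices, and then ``subdivide'' each tree edge of $T$ using further dummies, so that consecutive $(k+1)$-cliques in $\nabla$'s clique tree share exactly $k$ vertices, as required for a $k$-tree.

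Concretely, for each maximal clique $C$ of $H$ I introduce $k+1-|C|$ fresh \emph{node dummies} $D_C$ and set $\tilde C = C \cup D_C$, a $(k+1)$-clique of $\nabla$. For each edge $CC'$ of $T$ with separator $S = C \cap C'$ I introduce $k+1-|S|$ fresh \emph{edge dummies} $E_{CC'}$ and consider the \emph{middle clique} $M_{CC'} = S \cup E_{CC'}$, another $(k+1)$-clique of $\nabla$. I then realize the transition from $\tilde C$ to $\tilde{C'}$ along the edge $CC'$ as a sequence of $(k+1)$-cliques, each differing from the next by a single vertex swap, in two phases: first exchange the vertices of $(C \setminus C') \cup D_C$ one by one for the edge dummies in $E_{CC'}$ to reach $M_{CC'}$; then exchange the edge dummies for the vertices of $(C' \setminus C) \cup D_{C'}$ to reach $\tilde{C'}$. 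The key feature is that at no intermediate stage does any $(k+1)$-clique simultaneously contain a real $H$-vertex from $C \setminus C'$ and a real $H$-vertex from $C' \setminus C$.

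To realize $\nabla$ as a $k$-tree I root $T$ at an arbitrary node $C^*$ and add vertices in BFS order: start by adding the $k+1$ vertices of $\tilde C^*$ (the first $k$ forming the $K_k$ base, the $(k+1)$-th attached to them), and for each tree edge processed, add the edge dummies and previously-unseen real vertices in the order dictated by the subdivision, each attached to the $k$-clique obtained by removing the ``outgoing'' vertex from the current $(k+1)$-clique. Since, for every $v \in V(H)$, the set of maximal cliques of $H$ containing $v$ induces a subtree of $T$, each such $v$ is added exactly once and remains available in all subsequent subdivisions where it is needed.

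The verification splits into two checks. That $\nabla$ is a $k$-tree is immediate from the construction, since every vertex after the initial $K_k$ base is attached to a $k$-clique of already-present vertices. To see that $H$ is an induced subgraph of $\nabla$, note that every edge of $H$ lies in some maximal clique $C$ and hence in $\tilde C \subseteq V(\nabla)$, and that no edge is added between two non-adjacent $H$-vertices because in any intermediate $(k+1)$-clique along the subdivision of $CC'$ the real $H$-vertices lie entirely in $C$ or entirely in $C'$, both cliques of $H$. The main obstacle is precisely this: designing the subdivision so that the $k$-tree structure is preserved while no spurious edge appears between two non-adjacent $H$-vertices sitting on opposite sides of a minimal separator in $H$. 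The two-phase swap through the middle clique $M_{CC'}$ is exactly what achieves this.
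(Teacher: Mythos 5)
The paper does not prove Theorem~\ref{thm:k-tree}; it quotes it from \cite{twxi} (Theorem~1.9 there), so there is no in-text proof to compare against. Your construction is nevertheless a correct, self-contained argument. Padding each maximal clique $C$ of $H$ to a $(k+1)$-clique $\tilde C$ and routing each clique-tree edge $CC'$ through a middle $(k+1)$-clique $M_{CC'}=S\cup E_{CC'}$ of dummies, with two phases of single-vertex swaps, does produce a $k$-tree under the BFS ordering rooted at $C^*$: each new vertex is attached to a $k$-clique of already-present vertices. The running-intersection property is used exactly as you say, and it is worth making its two roles explicit: (i) it guarantees that when you process a child edge $CC'$, no vertex of $C'\setminus C$ has been added yet (any previously processed node lies outside the subtree rooted at $C'$, so the tree path to it passes through $C$ and would force the vertex into $C$); and (ii) it guarantees that whenever a real $H$-vertex $v$ is attached to a $k$-clique $Q$ (while processing the edge into $C'$, or at the root $C^*$), the real $H$-vertices of $Q$ all lie in the single maximal clique $C'$, so for any $u$ non-adjacent to $v$ in $H$ we have $u\notin C'$ and hence $u\notin Q$, which rules out spurious edges. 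The degenerate cases are handled automatically: an empty separator just makes $M_{CC'}$ all dummies, and padding ensures $|V(\nabla)|\geq k+1$ even for very small $H$. This is the natural clique-tree proof, and I expect it is essentially what \cite{twxi} does.
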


From Theorem~\ref{thm:k-tree} for $k=1,2$, it follows that every forest is contained in a tree, and every $2$-forest is contained in a $2$-tree. This enables us to prove Theorem~\ref{thm:maincone1} only when $F$ is a tree and $H$ is $2$-tree.

Moreover, we can allow $F$ to be ``regular'' and not necessarily induced. This calls for another definition: for integers $d\geq 1$ and $r\geq 0$, let $T_{d,r}$ be the rooted tree of radius $r$ such that if $r\geq 1$, then the root has degree $d$, and every vertex that is neither the root nor a leaf has degree $d+1$ (see Figure~\ref{fig:T{d,r}}). It is easy to observe that $T_{d,r}$ contains every tree of maximum degree $d$ and radius $r$. There is also a well-known result of Kierstead and Penrice \cite{KP}, that in sparse graphs, we may pass from a non-induced tree to an induced one:

\begin{theorem}[Kierstead and Penrice \cite{KP}]\label{thm:kp} For all integers $d,s,t\geq 1$ and $r\geq 0$, there is an integer $f=f(d,r,s,t)\geq 1$ with the following property. Let $G$ be a $(K_{s,s},K_t)$-free graph and $U$ be a subgraph of $G$ which is isomorphic to $T_{f,f}$. Then there exists an induced subgraph $U'$ of $G$ with $V(U')\subseteq V(U)$ such that $U'$ is isomorphic to $T_{d,r}$.    
\end{theorem}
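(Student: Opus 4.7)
The plan is to proceed by induction on $r$, with a strengthened inductive hypothesis that permits a bounded ``forbidden'' external vertex set. Formally, I would prove: for all $d, s, t \geq 1$, $r \geq 0$ and $c \geq 0$, there exists $f^\star = f^\star(d, r, s, t, c)$ such that for every $(K_{s,s}, K_t)$-free graph $G$, every subgraph $U \cong T_{f^\star, f^\star}$ of $G$ with root $\rho$, and every $W \subseteq V(G) \setminus V(U)$ with $|W| \leq c$, there is an induced $U' \subseteq V(U)$ with $U' \cong T_{d, r}$ rooted at $\rho$ and $V(U') \setminus \{\rho\}$ anticomplete to $W$. The theorem itself is the case $c = 0$.

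The base case $r = 0$ is immediate, since $T_{d, 0}$ is a single vertex. For the inductive step, set $c' := c + 1 + d \cdot |V(T_{d, r-1})|$, and choose $f^\star$ large enough that (i) any $K_t$-free graph on $f^\star$ vertices contains an independent set of some size $N \gg d$, and (ii) $f^\star - 1 \geq f^\star(d, r-1, s, t, c')$. Given $U$ rooted at $\rho$, first extract via Ramsey a set of $N$ pairwise non-adjacent children of $\rho$. Each such child roots a subtree isomorphic to $T_{f^\star, f^\star - 1}$, which by (ii) contains a copy of $T_{f^\star(d, r-1, s, t, c'), f^\star(d, r-1, s, t, c')}$ as a subgraph. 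Then, iteratively for $i = 1, \ldots, d$, select one still-available child $c_i$ and apply the strengthened hypothesis to its subtree with forbidden set $W_i := W \cup \{\rho\} \cup \bigcup_{j < i} U'_j$ (of size at most $c'$), yielding an induced $U'_i \cong T_{d, r-1}$ rooted at $c_i$. The union $\{\rho\} \cup \bigcup_{i} U'_i$ then forms the desired induced $T_{d, r}$.

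The principal obstacle is that the strengthened hypothesis only secures anticompleteness to $W_i$ for the \emph{non-root} vertices of $U'_i$, whereas the construction additionally requires the root $c_i$ of $U'_i$ to be anticomplete to the non-root vertices of the earlier $U'_j$'s. I would address this by interleaving the iterative selection with a further Ramsey filtration on the surviving independent children: before committing to $c_i$, partition them by their binary adjacency pattern with the bounded current forbidden set $W_i \setminus \{\rho\}$, and pass to a large monochromatic class. Here $K_{s,s}$-freeness becomes decisive: if any $s$ vertices of the forbidden set were commonly adjacent to $s$ children in the class, a $K_{s,s}$ would be embedded in $G$, so the ``positive'' coordinates of the common pattern have size at most $s - 1$; iterating this filtration reduces the residual adjacencies to zero.

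The technical heart of the argument is the joint bookkeeping, ensuring that the Ramsey parameters and the forbidden-set bound both remain functions of $(d, r, s, t, c)$ alone, so that the recursion defining $f^\star$ closes. Since at each level of the induction the forbidden set grows only by an additive constant depending on $d$ and $r$, and the Ramsey step only requires a function of $N$ and $t$ that is then absorbed into $f^\star(d, r-1, s, t, c')$, the recursion should indeed terminate; making this quantitative (and verifying that all intermediate selections are simultaneously compatible) is where the bulk of the technical work lies.
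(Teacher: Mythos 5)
The paper cites Theorem~\ref{thm:kp} from Kierstead and Penrice without reproving it, so there is no in-paper argument to compare against; the following assesses your proposal on its own terms.

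Your strengthened inductive hypothesis is false, and already in the case $c=0$. The culprit is the demand that the induced copy of $T_{d,r}$ be \emph{rooted at $\rho$}, the root of the ambient $U$. Let $F = T_{f,f}\setminus\{\rho\}$, a forest, and let $G=\cone(F)$ with apex $\rho$; take $U$ to be the spanning copy of $T_{f,f}$ given by the tree edges. The graph $G$ is $(K_{2,2},K_4)$-free: $G\setminus\{\rho\}$ is a forest, so any induced $C_4$ or $K_4$ would have to pass through $\rho$, which is impossible since $\rho$ is universal (a universal vertex lies in no induced $C_4$, and a $K_4$ through $\rho$ would force a triangle in $F$). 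Yet for $r\geq 2$ there is no induced $T_{d,r}$ inside $V(U)$ rooted at $\rho$: every induced subgraph of $G$ containing $\rho$ and at least one more vertex has $\rho$ as a universal vertex, whereas the root of $T_{d,r}$ with $r\geq 2$ has degree $d<|V(T_{d,r})|-1$. The theorem itself survives for this $G$ precisely because it lets the root float; $F$ is induced in $G$ and contains the desired copy. Note that $\cone(F)$ for $F$ a forest is the object at the heart of Theorem~\ref{thm:tw11conedforest} of the present paper, so these graphs are central to the setting, not a contrived pathology.

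The Ramsey filtration you invoke cannot repair this. After refining the independent children of $\rho$ by their adjacency pattern to the bounded forbidden set, you do obtain a large monochromatic class, and the usual sparsity argument (after a Ramsey cleanup of the $W$-side) bounds the number of positive coordinates; but it cannot force them to zero, because a single forbidden vertex may be adjacent to \emph{all} surviving children---$K_{1,n}$ is $(K_{s,s},K_t)$-free for every $s\geq 2$, $t\geq 3$. Iterating the filtration is of no help: the class is already monochromatic, so every further pass returns the same positive coordinates. Consequently the step where you require $c_i$ to be anticomplete to the bodies of the earlier $U'_j$'s (and to $W$) cannot in general be executed, and the recursion stalls.

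A correct argument must allow the root to move (so that when the current root is too densely joined into a subtree, one descends and abandons it) and must control cross-level edges from higher levels by a mechanism other than independent-set filtration. Both require ideas not present in your sketch.
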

\begin{figure}
    \centering
    \includegraphics[scale=0.6]{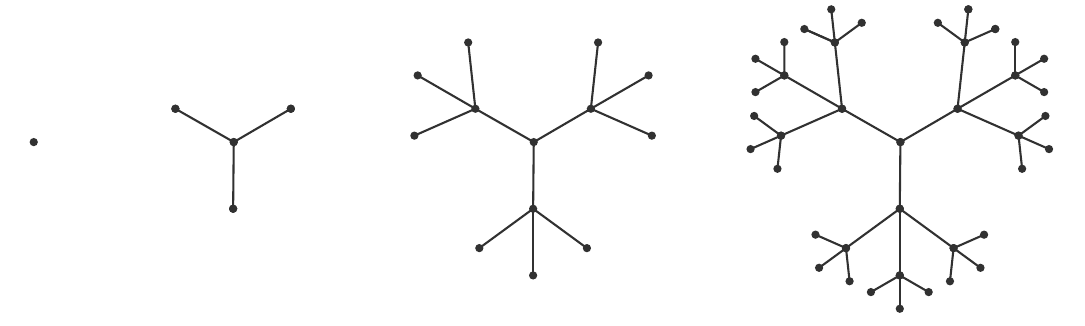}
    \caption{From left to right: the trees $T_{3,r}$ for $r=0,1,2,3$.}
    \label{fig:T{d,r}}
\end{figure}
In view of Theorems~\ref{thm:k-tree} and \ref{thm:kp}, in order to prove Theorem~\ref{thm:maincone1}, it suffices to show that:

\begin{theorem}\label{thm:maincone2}
    For all integers $f,t,h\geq 1$, there exists an integer $\tau_1=\tau_1(f,h,t)\geq 1$ such that for every graph $G\in \mathcal{E}_t$ of treewidth more than $\tau_1$, either $G$ contains every $2$-tree on $h+1$ vertices, or $G$ has a subgraph isomorphic to $\cone(\cone(T_{f,f}))$.
\end{theorem}

Let us deduce Theorem~\ref{thm:maincone1} from Theorem~\ref{thm:maincone2} right away:

\begin{proof}[Proof of Theorem~\ref{thm:maincone1} assuming 
Theorem~\ref{thm:maincone2}] Let $t\geq 1$ be an integer, let $F$ be a forest and let $H$ be a $2$-forest. By Theorem~\ref{thm:k-tree}, there is a tree $T$ which contains $F$ and there is a $2$-tree $\nabla$ which contains $H$. Let $d\geq 1$ and $r\geq 0$ be integers such that $T$ has maximum degree at most $d$ and radius at most $r$. Then $T_{d,r}$ contains $T$, which in turn contains $F$. Let $f=f(d,r,2,t)$ be as in Theorem~\ref{thm:kp}, let $h\geq 1$ such that $|V(\nabla)|\leq h+1$ and let $\tau_1=\tau_1(f,h,t)$ be as in Theorem~\ref{thm:maincone2}. 

Let $G$ be an (even-hole, $\cone(\cone(F))$, $H$, $K_t$)-free graph. Since $G$ is (even-hole, $K_t$)-free, it follows that $G\in \mathcal{E}_t$. Since $G$ is $H$-free, it follows that $G$ is $\nabla$-free, as well. Also, since $G$ is $(K_{2,2},\cone(\cone(F)),K_t)$-free, it follows from Theorem~\ref{thm:kp} that $G$ has no subgraph isomorphic to $\cone(\cone(T_{f,f}))$. Now, by Theorem~\ref{thm:maincone2}, $G$ has treewidth at most $\tau_1$, as required.
\end{proof}

Similarly, Theorem~\ref{thm:maincrystal1} can be reduced to a statement involving crystals that are non-induced but in a ``controlled'' manner. Like trees, it is also convenient to work with a ``regular'' version of crystals. Let $f,g\geq 1$ be integers, let $G$ be a graph and let $z_1,z_2\in V(G)$ be adjacent. A \textit{$(z_1,z_2,f,g)$-crystal in $G$} is a tuple $\mf{c}=(S_{1,z},z,S_{2,z}:z\in S)$ with the following specifications.

\begin{enumerate}[(CR1), leftmargin=15mm, rightmargin=7mm]
\item\label{CR1} $S$ is an $f$-subset of $V
(G)\setminus \{z_1,z_2\}$ (and thus $\mf{c}$ is a $3f$-tuple).
\item\label{CR2} \{$S_{1,z},S_{2,z}:z\in S\}$ are $2f$ pairwise disjoint $g$-subsets of $V(G)\setminus (S\cup \{z_1,z_2\})$.
\item\label{CR3}For each $i\in \{1,2\}$, every $z\in S$ and every $x\in S_{i,z}$, we have $N_{\{z_1,z_2,z\}}(x)=\{z_i,z\}$.
\end{enumerate}
See Figure~\ref{fig:regular_crystal}.
\begin{figure}
    \centering
    \includegraphics[scale=0.7]{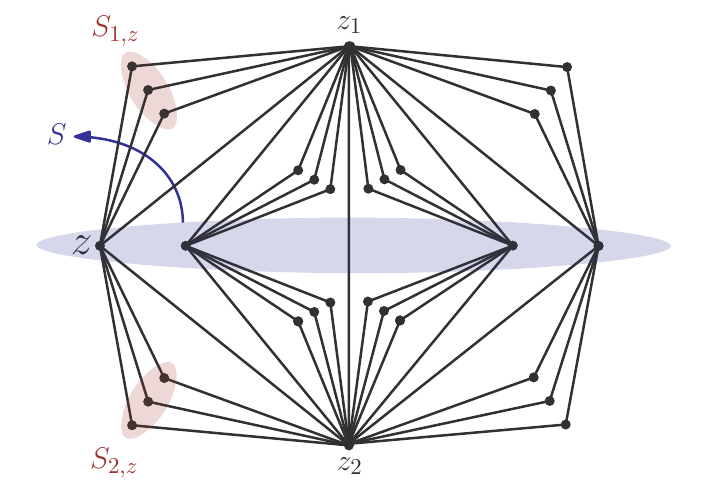}
    \caption{A $(z_1,z_2,4,3)$-crystal}
    \label{fig:regular_crystal}
\end{figure}
We write $V(\mf{c})=S\cup (\bigcup_{z\in S}(S_{1,z}\cup S_{2,z}))$. Also, by an \textit{$(f,g)$-crystal in $G$}, we mean a $(z_1,z_2,f,g)$-crystal in $G$ for some pair $z_1,z_2$ of adjacent vertices in $G$.

Note that a crystal $\mf{c}$ in $G$ may or may not spot an induced subgraph of $G$ which is a crystal. But if it does, then we say $\mf{c}$ is ``clear.'' More precisely, we say a $(z_1,z_2,f,g)$-crystal $\mf{c}=(S_{1,z},z,S_{2,z}:z\in S)$ in $G$ is \textit{clear} if $S$ is a stable set and $\{S_{1,z},S_{2,z}:z\in S\}$ are pairwise anticomplete stable sets in $G$. Just like Theorem~\ref{thm:kp}, we need to pass, in sparse graphs, from crystals to clear crystals. This is attained in Theorem~\ref{thm:clearingacrystal} below, using the next two classical results from Ramsey Theory:

\begin{theorem}[Ramsey \cite{multiramsey}]\label{classicalramsey}
For all integers $c,s\geq 1$, every graph $G$ on at least $c^s$ vertices contains either a $c$-clique or a stable set of cardinality $s$. 
\end{theorem}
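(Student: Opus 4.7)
The plan is to prove the equivalent formulation that the Ramsey number $R(c,s)$---the least integer $n$ for which every $n$-vertex graph contains a $c$-clique or a stable set of cardinality $s$---satisfies $R(c,s)\leq c^s$. I would argue by induction on $c+s$, following the standard Erd\H{o}s--Szekeres approach. The base cases $c=1$ and $s=1$ are trivial, since a $1$-clique (respectively, a stable set of cardinality $1$) is just a single vertex, and the bound $c^s\geq 1$ ensures $V(G)$ is nonempty.

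For the inductive step, suppose $c,s\geq 2$ and $|V(G)|\geq c^s$. Fix any vertex $v\in V(G)$ and partition $V(G)\setminus\{v\}$ into the neighborhood $N$ and the non-neighborhood $M$ of $v$ in $G$. I claim that either $|N|\geq (c-1)^s$ or $|M|\geq c^{s-1}$: indeed, if both fail, then
\[
|V(G)|=|N|+|M|+1\leq (c-1)^s+c^{s-1}-1<c^s,
\]
contradicting the hypothesis; here the final strict inequality reduces to $(c-1)^{s-1}\leq c^{s-1}$, which is obvious. In the first case, the inductive hypothesis applied to $G[N]$ with parameters $c-1$ and $s$ yields either a $(c-1)$-clique in $N$---which together with $v$ completes to a $c$-clique of $G$---or a stable set of cardinality $s$ in $N$. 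In the second case, the inductive hypothesis applied to $G[M]$ with parameters $c$ and $s-1$ yields either a $c$-clique in $M$ or a stable set of cardinality $s-1$ in $M$ which, together with $v$, gives a stable set of cardinality $s$ in $G$.

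This is a classical and well-known fact, so there is no genuine obstacle; the only subtle point is choosing the split exponents $(c-1)^s$ and $c^{s-1}$ so that the recursion $R(c,s)\leq R(c-1,s)+R(c,s-1)$ respects the target bound $c^s$ at each step of the induction.
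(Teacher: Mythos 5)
The paper states this as a cited result (attributed to Ramsey) and gives no proof of its own, so there is nothing to compare against. Your proof is correct: it is the standard Erd\H{o}s--Szekeres neighborhood-splitting induction, with the split thresholds $(c-1)^s$ and $c^{s-1}$ chosen so that the recursion $R(c,s)\le R(c-1,s)+R(c,s-1)$ is compatible with the target bound $c^s$; the key inequality $(c-1)^s + c^{s-1} \le c^s$ does indeed reduce to $(c-1)^{s-1}\le c^{s-1}$, and the base cases and parameter ranges ($c-1\ge 1$, $s-1\ge 1$ in the two branches) are handled correctly.
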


\begin{theorem}[Folklore; see Lemma 2.5 in \cite{pinned}]\label{ramsey2}
For all integers $q,r,s,t\geq 1$, there exists an integer $n=n(q,r,s,t)\geq 1$ with the following property. Let $G$ be a $(K_{s,s},K_t)$-free graph. Let $\mathcal{X}$ be a collection of pairwise disjoint subsets of $V(G)$, each of cardinality at most $r$, with $|\mathcal{X}|\geq n$. Then there are $q$ distinct sets $X_1,\ldots, X_q\in \mathcal{X}$ which are pairwise anticomplete in $G$.
\end{theorem}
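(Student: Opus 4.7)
The plan is a two-stage Ramsey reduction. First, I build an auxiliary graph $H$ on the vertex set $\mathcal{X}$ by making two sets of $\mathcal{X}$ adjacent in $H$ precisely when $G$ has an edge with an end in each. A stable set of size $q$ in $H$ is exactly the family of $q$ pairwise anticomplete sets required by the statement, so by Theorem~\ref{classicalramsey} it suffices to bound the clique number $\omega(H)$ by some $\omega=\omega(r,s,t)$: then $n(q,r,s,t):=(\omega+1)^q$ works, because any $H$ on $n$ vertices contains either $K_{\omega+1}$ (impossible) or a stable set of size $q$.

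To bound $\omega(H)$, suppose the $c$ sets $Y_1,\dots,Y_c\in\mathcal{X}$ pairwise touch in $G$. Enumerate each blob as $Y_i=\{y_i^1,\dots,y_i^{|Y_i|}\}$. For each pair $i<j$, fix one edge of $G$ between $Y_i$ and $Y_j$ and record the pair of positions $(a_{ij},b_{ij})\in[r]\times[r]$ of its endpoints. This colors the edges of the complete graph on $[c]$ by at most $r^2$ colors. Iterating Theorem~\ref{classicalramsey} yields, for $c$ sufficiently large in terms of $r$, $s$, $t$, a monochromatic clique on indices $i_1<\cdots<i_N$ whose pairs all carry the same color $(a,b)$. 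I then case-split on whether $a=b$. If $a=b$, then $y_{i_p}^a$ lies in $Y_{i_p}$ for every $p\in[N]$ and these $N$ vertices are pairwise adjacent in $G$, so $K_t$-freeness forces $N\le t-1$. If $a\ne b$, then $\{y_{i_p}^a:p\le\lfloor N/2\rfloor\}$ and $\{y_{i_q}^b:q>\lfloor N/2\rfloor\}$ are disjoint sets that are complete to each other in $G$, so $K_{s,s}$-freeness forces $\lfloor N/2\rfloor\le s-1$. Either way $N\le\max(t-1,2s-1)$, and tracing this back through the multicolor Ramsey bound yields the desired $\omega(r,s,t)$.

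The main obstacle I anticipate is the asymmetry of the labelling $(a,b)$: the labels are attached to an ordered pair $i<j$, so in the $a\ne b$ case one cannot directly conclude that the "position-$a$ vertices" form a clique, and one must instead split the monochromatic clique into a prefix and a suffix and read the edges across the split to extract a genuine $K_{s,s}$. Beyond this case split, everything else — the enumeration of blobs by $r^2$ colors and the final plug-in into Theorem~\ref{classicalramsey} — is routine. Note that $K_t$-freeness and $K_{s,s}$-freeness are used in mutually exclusive subcases, which explains why both hypotheses appear in the statement.
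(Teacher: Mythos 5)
Your plan --- pass to the auxiliary ``touching'' graph $H$ on $\mathcal{X}$, bound $\omega(H)$ by a positional-colouring argument, and then apply Theorem~\ref{classicalramsey} once more to extract a stable $q$-set --- is the standard folklore route and is structurally sound, but the $a\neq b$ case as written has a genuine gap. After splitting the monochromatic clique into $A=\{y_{i_p}^a : p\le\lfloor N/2\rfloor\}$ and $B=\{y_{i_q}^b : q>\lfloor N/2\rfloor\}$, you correctly observe that $A$ and $B$ are disjoint and that $A$ is complete to $B$. But this only exhibits $K_{s,s}$ as a \emph{subgraph} of $G$; the paper's convention is that ``$K_{s,s}$-free'' means no \emph{induced} $K_{s,s}$, and the vertices of $A$ come from $\lfloor N/2\rfloor$ distinct blobs $Y_{i_p}$, so nothing prevents $G[A]$ (or $G[B]$) from containing edges. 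Hence ``$K_{s,s}$-freeness forces $\lfloor N/2\rfloor\le s-1$'' does not follow from what you have established.

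The repair is one further application of Theorem~\ref{classicalramsey} inside each side of the split. Since $G$ is $K_t$-free, so are $G[A]$ and $G[B]$; hence if $\lfloor N/2\rfloor\geq t^s$ then $A$ contains a stable set $A'$ of size $s$ and $B$ contains a stable set $B'$ of size $s$. Now $A'$ is stable, $B'$ is stable, $A'$ and $B'$ are disjoint, and $A'$ is complete to $B'$, so $A'\cup B'$ induces a genuine $K_{s,s}$ in $G$, the contradiction you wanted. This replaces your bound $N\le 2s-1$ in the $a\neq b$ branch by (roughly) $N\le 2t^s-1$, which in turn enlarges the multicolour Ramsey threshold you must clear in the $r^2$-colouring, but the argument then closes. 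The $a=b$ case is fine as written (a pairwise-adjacent vertex set is automatically an induced clique), and the outer application of Theorem~\ref{classicalramsey} to $H$ with parameters $c=\omega+1$ and $s=q$ is correct.
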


\begin{theorem}\label{thm:clearingacrystal}Let $f,g,s,t\geq 1$ be integers and let $n(\cdot,\cdot,\cdot,\cdot)$ be the function as in Theorem~\ref{ramsey2}. Let $n_1=n_1(f,g,s,t)=n(f,2g+1,s,t)\geq 1$ and let $n_2=n_2(f,g,s,t)=n(2g,2,s,t)\geq 1$. Let $G$ be a $(K_{s,s},K_{t})$-free graph let $z_1,z_2\in V(G)$ be adjacent, and let $\mf{c}=(\Lambda_{1,z},z,\Lambda_{2,z}:z\in \Lambda)$ be a $(z_1,z_2,n_1,n_2)$-crystal in $G$. Then there is a clear $(z_1,z_2,f,g)$-crystal in $G$.
\end{theorem}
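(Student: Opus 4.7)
The plan is to apply Theorem~\ref{ramsey2} twice. The first application cleans up each ``column'' of $\mf{c}$ locally: for each $z\in \Lambda$, it carves out of $\Lambda_{1,z}$ and $\Lambda_{2,z}$ a pair of anticomplete stable sets of size $g$. The second application then separates different columns from one another, ensuring that $S$ is stable and that the various $S_{i,z}$'s are pairwise anticomplete across distinct values of $z$.

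For the first step, fix $z\in \Lambda$ and any bijection $\pi_z:\Lambda_{1,z}\to \Lambda_{2,z}$ (both sides have size $n_2$). The collection $\mca{X}_z=\{\{x,\pi_z(x)\}:x\in \Lambda_{1,z}\}$ consists of $n_2=n(2g,2,s,t)$ pairwise disjoint $2$-subsets of $V(G)$, so by Theorem~\ref{ramsey2} one finds $2g$ members of $\mca{X}_z$ that are pairwise anticomplete in $G$. Let $A_z\subseteq \Lambda_{1,z}$ and $B_z=\pi_z(A_z)\subseteq \Lambda_{2,z}$ be their endpoints; each has size $2g$, and both are stable (any two distinct endpoints on the same side come from two anticomplete pairs). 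Moreover, every edge between $A_z$ and $B_z$ must belong to the matching $M_z=\{\{x,\pi_z(x)\}:x\in A_z\}$ (again, since the pairs are pairwise anticomplete). Since $|M_z|\leq 2g$, one may choose one endpoint per edge of $M_z$ so that at most $g$ of the chosen endpoints lie on each side, and delete those endpoints; after trimming the resulting stable sets to size exactly $g$, we obtain $S'_{1,z}\subseteq \Lambda_{1,z}$ and $S'_{2,z}\subseteq \Lambda_{2,z}$ that are stable, anticomplete in $G$, and inherit condition~\ref{CR3} from $\mf{c}$.

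For the second step, set $X_z=\{z\}\cup S'_{1,z}\cup S'_{2,z}$ for each $z\in \Lambda$; this is a $(2g+1)$-subset of $V(G)$. By~\ref{CR1} and~\ref{CR2} applied to $\mf{c}$, the sets $\{X_z:z\in\Lambda\}$ are pairwise disjoint and number $n_1=n(f,2g+1,s,t)$. A second invocation of Theorem~\ref{ramsey2} therefore produces an $f$-subset $S\subseteq \Lambda$ with $\{X_z:z\in S\}$ pairwise anticomplete in $G$. Setting $S_{i,z}=S'_{i,z}$ for $z\in S$ and $i\in \{1,2\}$, the tuple $\mf{c}'=(S_{1,z},z,S_{2,z}:z\in S)$ is the desired clear crystal: conditions~\ref{CR1} and~\ref{CR2} are immediate, \ref{CR3} is inherited from $\mf{c}$, and clearness follows because $S$ is stable (distinct $X_z$'s are anticomplete, and $z\in X_z$), each $S_{i,z}$ is stable by the first step, and two sets $S_{i,z},S_{i',z'}$ with $(z,i)\neq (z',i')$ are anticomplete either by the first step when $z=z'$, or by the second when $z\neq z'$.

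The main technical point is the pairing in the first step: to simultaneously extract stable sets from $\Lambda_{1,z}$ and $\Lambda_{2,z}$ whose cross-interaction is as restricted as possible, one must feed Theorem~\ref{ramsey2} the \emph{paired} sets $\{x,\pi_z(x)\}$ rather than singletons -- which is precisely why the definition of $n_2$ takes $r=2$. The ensuing matching-shrinkage is a routine combinatorial cleanup, and the second Ramsey application is entirely standard.
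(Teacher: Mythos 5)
Your proof is correct and follows essentially the same two-stage Ramsey argument as the paper: first pairing $\Lambda_{1,z}$ with $\Lambda_{2,z}$ via a bijection and applying Theorem~\ref{ramsey2} with $r=2$ to the $n_2$ resulting pairs, then applying it again with $r=2g+1$ to the $n_1$ triples $\{z\}\cup S_{1,z}\cup S_{2,z}$. The only (cosmetic) difference is in extracting the anticomplete stable sets from the $2g$ anticomplete pairs: the paper simply takes the $\Lambda_{1,z}$-side of the first $g$ pairs and the $\Lambda_{2,z}$-side of the last $g$ pairs, which avoids your matching-deletion step, but your argument is equally valid.
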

\begin{proof}
   We first show that:

   \sta{\label{st:antistable} For each $i\in \{1,2\}$ and every $z\in \Lambda$, there exists a stable set $S_{i,z}\subseteq \Lambda_{i,z}$ of cardinality $g$ such that $S_{1,z}$ and $S_{2,z}$ are anticomplete in $G$.}

   Let $z\in \Lambda$ be fixed. Since $|\Lambda_{1,z}|=|\Lambda_{2,z}|=n_1$, we may choose a bijection $u:\Lambda_{1,z}\rightarrow \Lambda_{2,z}$. Consider the collection $\mathcal{X}=\{\{x,u(x)\}:x\in \Lambda_{1,z}\}$; then $|\mathcal{X}|=n_1$ and the elements of $\mathcal{X}$ are pairwise disjoint $2$-subsets of $V(G)$. Since $G$ is $(K_{s,s},K_t)$-free, it follows from the choice of $n_1$ and Theorem~\ref{ramsey2} that there are $2g$ vertices $x^1_1\ldots,x^1_g,x^2_1\ldots, x^2_g\in \Lambda_{1,z}$ for which the sets $\{\{x^i_j,u(x^i_j)\}:i\in \{1,2\}, j\in [g]\}$ are pairwise anticomplete in $G$. Let $S_{1,z}=\{x^1_j:j\in [g]\}\subseteq \Lambda_{1,z}$ and let $S_{2,z}=\{u(x^2_j):j\in [g]\}\subseteq \Lambda_{2,z}$. Then $S_{1,z}$ and $S_{2,z}$ are anticomplete stable sets in $G$, each of cardinality $g$. This proves \eqref{st:antistable}.
\medskip

Now, consider the collection $\mathcal{X}=\{S_{1,z}\cup S_{2,z}\cup \{z\}:z\in \Lambda\}$. Then $|\mathcal{X}|=n_2$ and the elements of $\mathcal{X}$ are pairwise disjoint $(2g+1)$-subset of $V(G)$. Since $G$ is $(K_{s,s},K_t)$-free, it follows from the choice of $n_2$ and Theorem~\ref{ramsey2} that there exists an $f$-subset $S$ of $\Lambda$ for which the set $\{S_{1,z}\cup S_{2,z}\cup \{z\}:z\in S\}$ are pairwise anticomplete in $G$. In particular, $S$ is a stable set, and from \eqref{st:antistable}, it follows that  $\{S_{1,z},S_{2,z}:z\in S\}$ are pairwise anticomplete stable sets in $G$. Hence, $\mf{c}'=(S_{1,z},z,S_{2,z}:z\in S)$ is a clear $(z_1,z_2,f,g)$-crystal in $G$. This completes the proof of Theorem~\ref{thm:clearingacrystal}.
\end{proof}

With Theorem~\ref{thm:clearingacrystal} in our hands, we may now reduce Theorem~\ref{thm:maincrystal1} to:

\begin{theorem}\label{thm:maincrystal2}
    For all integers $f,g,t\geq 1$, there exists an integer $\tau_2=\tau_2(f,g,t)\geq 1$ such that for every graph $G\in \mathcal{E}_t$ of treewidth ore  $\tau_2$, there is an $(f,g)$-crystal in $G$.
\end{theorem}

This only takes a brief argument:

\begin{proof}[Proof of Theorem~\ref{thm:maincrystal1} assuming 
Theorem~\ref{thm:maincrystal2}] Let $H$ be a crystal. Then there are integers $f,g\geq 1$ as well as $f$ double stars $H_1,\ldots, H_f$, each of maximum degree at most $g$, such that $H$ is obtained from $\cone(H_1),\ldots, \cone(H_f)$ by an identification of their middle edges. Let $n_1=n_1(f,g,2,t)\geq 1$ and $n_2=n_2(f,g,2,t)\geq 1$ be as in Theorem~\ref{thm:clearingacrystal}, and let $\tau_2=\tau_2(n_1,n_2,t)\geq 1$ be as in Theorem~\ref{thm:maincrystal2}. We claim that every $H$-free graph $G\in \mathcal{E}_t$ has treewidth at most $\tau_2$. To see this, observe that since $G$ is $H$-free, it follows that there is no clear $(f,g)$-crystal in $G$. This, combined with Theorem~\ref{thm:clearingacrystal} and the assumption that $G$ is $(K_{2,2},K_t)$-free, implies that there is no $(n_1,n_2)$-crystal in $G$. But now the claim follows from Theorem~\ref{thm:maincrystal2}. 
\end{proof}
Hence, it remains to prove Theorems~\ref{thm:maincone2} and \ref{thm:maincrystal2}.

\section{Phantom layered wheels}\label{sec:phantomintro}

Roughly, our proofs of Theorems~\ref{thm:maincone2} and \ref{thm:maincrystal2} are in two steps. First, we show that for every $t\geq 1$, every graph $G\in \mathcal{E}_t$ with sufficiently large treewidth has an induced subgraph $P$ which is an ``approximate version'' of a layered wheel of large treewidth. We call $P$ a ``phantom'' in $G$. The second step, then, is to take the phantom on a roller coaster ride of Ramsey-type arguments, extracting more and more similarities with the structure of the layered wheels until the desired result follows.

We complete these two steps in the reverse order in Sections~\ref{sec:phantom} and \ref{sec:conjuring}. Our goal in this section is to show how the above strategy leads to the proofs of Theorems~\ref{thm:maincone2} and \ref{thm:maincrystal2}.

First, we must clarify what we mean by the ``approximate'' version of the layered wheels. This needs a slightly clearer picture of the structure of these graphs \cite{layered-wheels}. Specifically, for an integer $r\geq 0$, a layered wheel $L$ of treewidth (at least) $r$ is a $K_{r+1}$-minor model in which each branching set induces a path. This means that $V(L)$ can be partitioned into $r+1$ pairwise disjoint paths $L_0,\ldots, L_r$ in $L$ such that for all distinct $i,j\in \{0,\ldots, r\}$, there is an edge of $L$ with an end in $L_i$ and an end in $L_j$. In fact, a lot more happens: \textit{for each $i\in [r]$, every two vertices $z_1,z_2\in V(L_0)\cup \cdots \cup V(L_{i-1})$ which are adjacent in $L$ have (many) ``private'' common neighbors in $L_{i}$.} This is crucial for the layered wheels to be even-hole-free (the underlying reason can be traced back to Theorem~\ref{thm:evenwheeltheta}), and our ``approximate'' layered wheels are exactly meant to capture this property.

We now give the formal definition. Let $G$ be a graph, let $Z_0\subseteq V(G)$ and let $d\geq 1$ and $r\geq 0$ be integers. A \textit{$(Z_0,d,r)$-phantom in $G$} is a $(2r+1)$-tuple $\mf{p}=(Z_0,\ldots, Z_r;\Gamma_i:i\in [r])$ with the following specifications.
\begin{enumerate}[(P1), leftmargin=15mm, rightmargin=7mm]
\item\label{P1} We have $Z_0\subseteq \cdots\subseteq Z_{r}\subseteq V(G)$.
    \item\label{P2} For every $i\in [r]$, $\Gamma_i$ is a map with domain $E(G[Z_{i-1}])$ such that:
    \begin{itemize}
        \item for each edge $e\in E(G[Z_{i-1}])$, we have $\Gamma_i(e)\subseteq Z_i\setminus Z_{i-1}$ with $|\Gamma_i(e)|= d$ and the ends of $e$ are complete to $\Gamma_i(e)$; and
        \item for all distinct $e,e'\in E(G[Z_{i-1}])$, we have $\Gamma_i(e)\cap \Gamma_i(e')=\emptyset$.
    \end{itemize}
\end{enumerate}

\begin{figure}[t!]
    \centering
\includegraphics[scale=0.6]{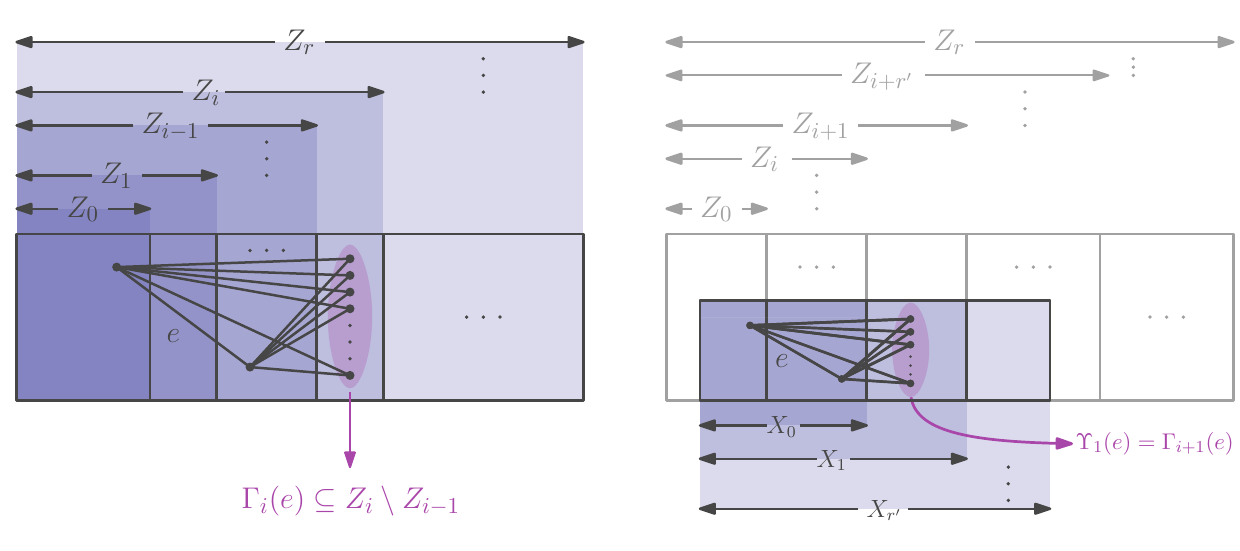}
\caption{Left: A $(Z_0,d,r)$-phantom $\mf{p}$. Right: The $(X_0,d,r')$-phantom $\mf{p}[X_0;i,r']$ for $X_0\subseteq Z_i$.}
    \label{fig:phantom}
\end{figure}
See Figure~\ref{fig:phantom}. 

Moreover, we will often use a natural notion of a ``sub-phantom'' of $\mf{p}$, which we define next. Let $i,r'\in \{0,\ldots, r\}$ such that $i+r'\leq r$ and let $X_0\subseteq Z_i$.  We denote by $\mf{p}[X_0;i,r']$ the $(X_0,d,r')$-phantom 
$(X_0,\ldots, X_{r'};\Upsilon_i:i\in [r'])$ which is defined recursively, as follows. For each $j\in [r']$ with $X_{j-1}$ already defined, let 
$$X_{j}=X_{j-1}\cup \left(\bigcup_{e\in E(G[X_{j-1}])}\Gamma_{i+j}(e)\right);$$
and let
$$\Upsilon_{j}=\Gamma_{i+j}{\bigg|}_{E(G[X_{j-1}])}.$$

See Figure~\ref{fig:phantom}.

We leave it to the reader to check that the above definition does yield a $(X_0,d,r')$-phantom. Also, for each $j\in \{0,\ldots, r'\}$, we have $X_{j}\subseteq Z_{i+j}$, and for each $j\in [r']$, we have $X_{i}\setminus X_{i-1}\subseteq Z_{i+j}\setminus Z_{i+j-1}$.

As mentioned at the beginning of this section, our first step is to show that every graph $G\in \mathcal{E}_t$ with sufficiently large treewidth contains a large phantom. We need this to be true both when $Z_0$ is a $2$-clique and when $Z_0$ is a $3$-clique. Therefore, we prove:

\begin{theorem}\label{thm:twtokalei} For all integers $d,t
\geq 1$ and $r_1\geq 0$, there exists an integer $\Omega=\Omega(d,r_1,t)\geq 1$ with the following property. Let $G\in \mathcal{E}_t$ be a graph of treewidth more than $\Omega$. Then there is a $2$-clique $Z_0^1$ as well as a $(Z^1_0,d,r_1)$-phantom in $G$. Consequently, for every $r_2\in [r_1-1]$, there is a $3$-clique $Z^2_0$ as well as a $(Z^2_0,d,r_2)$-phantom in $G$.
\end{theorem}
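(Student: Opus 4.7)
I would prove the $(Z_0^1,d,r_1)$-phantom statement by induction on $r_1$, then read off the $3$-clique consequence from a single vertex in the first layer $\Gamma_1$. For the base case $r_1=0$, a $(Z_0,d,0)$-phantom is just a $2$-clique, which exists in any $G\in\mca{E}_t$ with $\tw(G)\geq 1$, so $\Omega(d,0,t):=1$ works. For the $3$-clique consequence (assuming the $2$-clique case), given a $(Z_0^1,d,r_1)$-phantom $\mf{p}=(Z_0,\ldots,Z_{r_1};\Gamma_i:i\in[r_1])$ with $Z_0^1=\{z_1,z_2\}$, pick any $v\in\Gamma_1(\{z_1,z_2\})$; by~\ref{P2}, $v$ is adjacent to both $z_1$ and $z_2$, so $Z_0^2:=\{z_1,z_2,v\}$ is a $3$-clique. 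For every $r_2\in[r_1-1]$ we have $1+r_2\leq r_1$, so the sub-phantom $\mf{p}[Z_0^2;1,r_2]$ (as defined in the excerpt) is a $(Z_0^2,d,r_2)$-phantom.

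For the inductive step, assume $\Omega(d',r_1-1,t)$ exists for every $d'\geq 1$. The plan is to combine two ingredients. First, a treewidth-to-common-neighborhood statement for $\mca{E}_t$: in any $G\in\mca{E}_t$ of sufficiently large treewidth there is an edge $\{z_1,z_2\}$ together with a vertex set $Y\subseteq N_G(z_1)\cap N_G(z_2)$ such that $\tw(G[Y])>\Omega(d',r_1-1,t)$, for an appropriately large $d'=d'(d,r_1,t)$. I expect this step to be developed in a later section of the paper, leveraging the structure forced on $G$ by its exclusion of $K_{2,2}$, thetas, prisms, and even wheels. Second, applying the inductive hypothesis inside $G[Y]$ yields a $(Z_0',d',r_1-1)$-phantom $\mf{p}'$ all of whose vertices lie in $N_G(z_1)\cap N_G(z_2)$. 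I would then set $Z_0^1:=\{z_1,z_2\}$, take $\Gamma_1(\{z_1,z_2\})$ to be a suitably chosen $d$-subset of $V(\mf{p}')$, and inherit $Z_2,\ldots,Z_{r_1}$ and $\Gamma_2,\ldots,\Gamma_{r_1}$ from $\mf{p}'$. Because every vertex of $\mf{p}'$ is already a common neighbor of $z_1$ and $z_2$, the new edges incident to $z_1$ or $z_2$ appearing at each level can inherit their required common-neighbor sets from the corresponding layers of $\mf{p}'$.

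The main obstacle will be quantitative: the disjointness condition in~\ref{P2} must be met simultaneously for \emph{every} edge at every level, including the newly created edges running between $\{z_1,z_2\}$ and the layers of $\mf{p}'$. This will force us to inflate $d'$ significantly relative to $d$ and the combinatorial size of the growing phantom, and then invoke Ramsey-type cleanings (Theorems~\ref{classicalramsey} and~\ref{ramsey2}) to partition each layer of $\mf{p}'$ among its parent edges, extracting a pairwise-disjoint family of $d$-element sets $\Gamma_i(e)$, one per edge $e$ of $G[Z_{i-1}^1]$. Obtaining the treewidth-to-common-neighborhood ingredient from the $\mca{E}_t$-hypothesis is the deepest technical component of the proof, and is where the even-hole-free flavour of the class really enters.
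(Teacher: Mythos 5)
Your base case ($r_1=0$) and the deduction of the $3$-clique consequence from the $2$-clique phantom (take $z\in\Gamma_1(z_1z_2)$, set $Z_0^2=\{z_1,z_2,z\}$ and restrict to the sub-phantom $\mf{p}[Z_0^2;1,r_2]$) are both correct and match the paper exactly.

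The inductive step, however, hinges on a claim that is false for the class $\mathcal{E}_t$. You propose finding an edge $\{z_1,z_2\}$ such that $G[Y]$ has large treewidth for some $Y\subseteq N_G(z_1)\cap N_G(z_2)$, and then recursing inside $G[Y]$ so that the entire sub-phantom $\mf{p}'$ lives in the common neighborhood. This cannot be a route to the result: in any $K_4$-free graph, $N_G(z_1)\cap N_G(z_2)$ for adjacent $z_1,z_2$ is a stable set (any edge there together with $z_1z_2$ gives a $K_4$), hence has treewidth $0$. The layered wheels are (even-hole, $K_4$)-free graphs of arbitrarily large treewidth, so they lie in $\mathcal{E}_4$ and are precisely the graphs this theorem must handle, yet every common neighborhood of an edge in them is edgeless. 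In fact, the definition of a $(Z_0,d,r)$-phantom deliberately does \emph{not} confine $Z_1,\ldots,Z_r$ to $N_G(z_1)\cap N_G(z_2)$: $\Gamma_2$ supplies common neighbors of edges of $G[Z_1]$, and these may live anywhere in $G$. Even if your claim held, the patching would not go through: you take $\Gamma_1(z_1z_2)$ to be an arbitrary $d$-subset of $V(\mf{p}')$, but the inner phantom's layers provide disjoint common-neighbor sets only for edges inside $Z_0',Z_1',\ldots$, while the edges you now need to feed to $\Gamma_2$ are $z_1z_2$, the $2d$ edges from $\{z_1,z_2\}$ into $\Gamma_1(z_1z_2)$, and the edges inside $\Gamma_1(z_1z_2)$, none of which correspond to a layer of $\mf{p}'$ unless $\Gamma_1(z_1z_2)$ happens to equal the $2$-clique $Z_0'$ (which has only two vertices).

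The paper takes an entirely different route. Large treewidth yields, via Theorem~\ref{thm:noblocksmalltw_Ct}, a pair of non-adjacent vertices joined by many internally disjoint paths, and Theorem~\ref{thm:kaleidoscopeexists} extracts from those paths a \emph{kaleidoscope} $(a,x,y,\mathcal{W})$ together with a $2$-clique $Z_0^1$ that is $3$-mirrored by it. The phantom is then built one layer at a time by Lemma~\ref{lem:gettingphantom}: each edge $e$ of the current layer and each path $W\in\mathcal{W}$ forms, with $a,x,y$, a hole $C_W$, and Theorem~\ref{thm:evenwheeltheta} guarantees a common neighbor of the ends of $e$ on $C_W$; Ramsey-type cleanups and Theorem~\ref{thm:difftodiff} then let one harvest $d$ disjoint such common neighbors per edge while shrinking the kaleidoscope so that the new layer is again $3$-mirrored, allowing the process to iterate (Theorem~\ref{thm:gettingphantom}). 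This mirroring machinery is what lets the phantom grow outward along the paths rather than inward into common neighborhoods, and it is the part your proposal is missing.
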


For the second step, we prove the following two results. The first will be used in the proof of Theorem~\ref{thm:maincone2} and the second will be used in the proof of Theorem~\ref{thm:maincrystal2}.

\begin{theorem}\label{thm:doublecone}
     Let $f,g,h,t\geq 1$ be integers. Let $G$ be a $(K_{2,2},K_t)$-free graph and let $Z\subseteq V(G)$ with $|Z|\leq h$. Let $Z_0=\{z_1,z_2,z\}\subseteq Z$ be a $3$-clique such that $N_Z(z)=\{z_1,z_2\}$. Let $\mf{p}=(Z_0,\cdots,Z_f;\Gamma_i:i\in [f])$ be a $(Z_0,f+g+2^ht,f)$-phantom in $G$ such that $Z_{f}\cap Z=Z_0$. Then one of the following holds.
\begin{enumerate}[{\rm(a)}]
        \item\label{thm:doublecone_a} There exists a $(z_1,z_2,1,g)$-crystal $\mf{c}$ in $G[Z_f]$ such that $V(\mf{c})$ is anticomplete to $Z\setminus Z_0$.
        \item \label{thm:doublecone_b} $G$ has a subgraph isomorphic to $\cone(\cone(T_{f,f}))$.
        \end{enumerate}
\end{theorem}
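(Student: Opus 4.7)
The plan is to attempt to build a subgraph $T$ of $G[Z_f]$ isomorphic to $T_{f, f}$ rooted at $z$, level by level, with every tree vertex adjacent to both $z_1, z_2$ and anticomplete to $Z \setminus Z_0$; a local failure at some leaf will directly produce the desired $(z_1, z_2, 1, g)$-crystal, so we obtain either alternative (a) or alternative (b).

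The key tool is a \emph{sparse adjacency property} coming from $(K_{2,2}, K_t)$-freeness: for every $i \in [f]$, every edge $e = ab \in E(G[Z_{i-1}])$ and every vertex $w$ with $w \not\sim a$, the set $\Gamma_i(e) \cap N_G(w)$ is a clique, since any two non-adjacent members of it together with $a, w$ would induce a $K_{2,2}$; combined with the edge $ab$, this clique has size at most $t - 3$ by $K_t$-freeness. As a corollary, if $a$ is anticomplete to $Z \setminus Z_0$, then $\Gamma_i(e)$ contains at most $(h-3)(t-3) \leq ht$ vertices with a neighbor in $Z \setminus Z_0$. Since $z$ and every tree vertex I construct will be anticomplete to $Z \setminus Z_0$, this bound applies throughout.

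I would then extend $T$ as follows. Starting with $T = \{z\}$ (which is anticomplete to $Z \setminus Z_0$ by $N_Z(z) = \{z_1, z_2\}$), suppose $T$ has been built through depth $i - 1$ with every tree vertex lying in $Z_{i-1}$, adjacent to both $z_1, z_2$, and anticomplete to $Z \setminus Z_0$. For each current leaf $u$, consider the sets $\Gamma_i(u z_1) \cap N_G(z_2)$ and $\Gamma_i(u z_2) \cap N_G(z_1)$, both consisting of common neighbors of the triangle $\{u, z_1, z_2\}$. \textbf{Extension case:} if some candidate set has size at least $f + ht$, then after removing the at most $ht$ vertices with a neighbor in $Z \setminus Z_0$, at least $f$ remain, and we adopt them as the children of $u$ in $T$; the phantom's disjointness property ensures these children are distinct from all previously placed tree vertices and from children of other leaves. \textbf{Failure case:} if both candidate sets have size less than $f + ht$, then for each $j \in \{1, 2\}$,
\begin{equation*}
|\Gamma_i(u z_j) \setminus (N_G(z_{3-j}) \cup N_G(Z \setminus Z_0))| \geq d - (f + ht) - ht = g + 2^h t - 2ht \geq g,
\end{equation*}
where the last inequality uses $2^h \geq 2h$, valid since $h \geq |Z_0| = 3$; choosing $g$ vertices from each of these two sets as $S_{1, u}$ and $S_{2, u}$, the tuple $(S_{1, u}, u, S_{2, u})$ is a $(z_1, z_2, 1, g)$-crystal in $G[Z_f]$ whose vertex set is anticomplete to $Z \setminus Z_0$, delivering alternative (a).

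If the extension never fails, after $f$ levels we obtain a copy of $T_{f, f}$ in $G[Z_f]$ rooted at $z$ with every vertex adjacent to both $z_1$ and $z_2$, which together with the edge $z_1 z_2$ yields a subgraph of $G$ isomorphic to $\cone(\cone(T_{f, f}))$, i.e.\ alternative (b). The main obstacle is the bookkeeping: each new child must inherit the invariants (notably anticompleteness to $Z \setminus Z_0$) so that the sparse adjacency property can be reapplied at the next level, and the width $f + g + 2^h t$ must be chosen generously enough to simultaneously absorb the $Z$-related slack at every level and to keep the extension and failure thresholds well separated.
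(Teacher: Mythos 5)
Your proposal is correct and follows essentially the same strategy as the paper, which proves the stronger statement Theorem~\ref{thm:phantomtodoublecone} by induction on the phantom depth: the core dichotomy at each leaf $u$ is identical (large set of common neighbors of $\{u,z_1,z_2\}$ in the relevant $\Gamma$-set means extend the tree; small set of common neighbors on both sides yields a $(z_1,z_2,1,g)$-crystal centered at $u$), and the phantom's disjointness property handles the consistency of the level-by-level construction. The one genuine variation is how you bound the number of contaminated $\Gamma$-vertices: the paper shows $|K_i|<2^ht$ via a pigeonhole over neighborhoods in $Z\setminus Z_0$ followed by a $K_t/K_{2,2}$-freeness argument, whereas you observe directly that $\Gamma_i(e)\cap N_G(w)$ is a clique (any two non-adjacent members give a $K_{2,2}$ with $u,w$), hence of size at most $t-3$, giving the sharper bound $(h-3)(t-3)$; this is cleaner and fits comfortably inside the $2^ht$ slack budget since $2ht\leq 2^ht$. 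Also, your direct level-by-level construction implicitly tracks the $\Gamma$-membership of tree vertices, which is exactly the third bullet the paper carries through its induction in order to establish disjointness of the recursively obtained subtrees in step \eqref{st:vdisjoint}, so you have absorbed that bookkeeping rather than sidestepped it.
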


\begin{theorem}\label{thm:crystal}
Let $f,g,t\geq 1$ be integers. Let $G$ be a $K_t$-free graph. Assume that there is a $(Z_0,f+g,t)$-phantom in $G$ for some $2$-clique $Z_0\subseteq V(G)$. Then there is an $(f,g)$-crystal in $G$.
\end{theorem}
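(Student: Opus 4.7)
I proceed by induction on $t$. For the base cases $t\leq 3$, a $K_t$-free graph contains no triangle, so $|\Gamma_1(z_1 z_2)|=f+g\geq 2$ is impossible; hence the hypothesis is vacuous.

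For the inductive step, fix the phantom $\mf{p}=(Z_0,\ldots,Z_t;\Gamma_i:i\in[t])$ with $Z_0=\{z_1,z_2\}$. For each $z\in\Gamma_1(z_1z_2)$ and $j\in\{1,2\}$, set
\[
B_j(z)\;=\;\Gamma_2(z\dd z_j)\setminus N_G(z_{3-j}),\qquad A_j(z)\;=\;\Gamma_2(z\dd z_j)\cap N_G(z_{3-j}).
\]
By the phantom property \ref{P2}, every element of $B_j(z)$ is adjacent to exactly $\{z_j,z\}$ among $\{z_1,z_2,z\}$, so $B_j(z)$ consists of precisely the vertices eligible for $S_{j,z}$ in a $(z_1,z_2,\cdot,\cdot)$-crystal. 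Call $z$ \emph{good} if $|B_1(z)|\geq g$ and $|B_2(z)|\geq g$.

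\textbf{Case 1 (direct construction).} If at least $f$ vertices $z\in\Gamma_1(z_1z_2)$ are good, let $S$ be any $f$-subset of good $z$'s and, for each $z\in S$ and $j\in\{1,2\}$, let $S_{j,z}$ be any $g$-subset of $B_j(z)$. Pairwise disjointness demanded by \ref{CR2} follows from property \ref{P2} of $\mf{p}$ (since $\Gamma_2(z\dd z_j)$ for distinct $(z,j)$ are disjoint), while \ref{CR3} holds by the definition of $B_j(z)$. Hence $(S_{1,z},z,S_{2,z}:z\in S)$ is the desired $(z_1,z_2,f,g)$-crystal.

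\textbf{Case 2 (clique extension and recursion).} Otherwise, since $|\Gamma_1(z_1z_2)|=f+g$, there exists a bad $z^*$; WLOG $|A_1(z^*)|>f$. Pick $y\in A_1(z^*)$: then $\{z_1,z_2,z^*,y\}$ is a $K_4$ in $G$. I plan to invoke the sub-phantom $\mf{p}[\{z^*,y\};2,t-2]$, which is a $(\{z^*,y\},f+g,t-2)$-phantom rooted at the $2$-clique $\{z^*,y\}$, and apply the inductive hypothesis inside the subgraph $H:=G[N_G(z_1)\cap N_G(z_2)]$. Since any clique in $H$ together with $\{z_1,z_2\}$ is a clique in $G$, $H$ is $K_{t-2}$-free, which is where the induction on $t$ bites.

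\textbf{Expected main obstacle.} The technical crux is confining the sub-phantom to $H$: a priori, the vertex sets appearing in $\mf{p}[\{z^*,y\};2,t-2]$ need not lie in $N_G(z_1)\cap N_G(z_2)$, and the width $f+g$ provides no direct slack against the intersection. I expect to overcome this by strengthening the inductive statement to allow a larger initial width $D(f,g,t)$ in place of $f+g$, and by using the margin at each phantom level to pigeonhole-filter vertices down to those still in $H$ (iterating the $A_j/B_j$ partition at each new level plays the same role as in Case 1). Specializing $D(f,g,t)$ back to $f+g$ at the outermost call then recovers the theorem as stated; alternatively, the $>f$ vertices in $A_1(z^*)$ already sit in $H$ and supply the seed material for a direct inductive appeal without needing any extra width.
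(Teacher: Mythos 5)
Your Case 1 is sound and closely parallels the direct-construction branch of the paper's argument: the sets $B_j(z)$ are exactly the vertices eligible for $S_{j,z}$, and (P2) gives the disjointness needed for (CR2). The problem lies in Case 2, and you have diagnosed it yourself: the sub-phantom $\mf{p}[\{z^*,y\};2,t-2]$ is a phantom in $G$, not in $H=G[N_G(z_1)\cap N_G(z_2)]$. This is not a technicality that extra initial width can patch. Nothing in the phantom axioms (P1)--(P2) constrains the adjacency of vertices in $\Gamma_3(z^*y)$, let alone deeper levels, to $z_1$ or $z_2$ -- they are only promised to be complete to the endpoints of the edge indexing them. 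So all of $\Gamma_3(z^*y)$ may lie outside $H$, and no pigeonhole over a width $D(f,g,t)$ can conjure vertices of $H$ that the phantom never supplied. Even worse, the vertices that do happen to lie in $H$ need not inherit a phantom structure there: the fibers $\Gamma_i(e)\cap H$ could be empty or too small, and there is no analogue of the $A_j/B_j$ split available once $e$ has no endpoint in $\{z_1,z_2\}$. Your fallback remark -- that the $>f$ vertices of $A_1(z^*)$ ``supply the seed material for a direct inductive appeal'' -- does not close this gap either, since you still need an entire phantom rooted there and living inside $H$, which you do not have.

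The paper avoids this confinement problem entirely by choosing a different induction variable and a different strengthening. It inducts on the phantom depth $r$ (not on $t$), proving: either $G[Z_r]$ contains an $(f,g)$-crystal, or there are $g$ pairwise disjoint $r$-cliques in $Z_r\setminus Z_0$ complete to $Z_0$. For each $z\in\Gamma_1(z_1z_2)$ and each $j\in\{1,2\}$, it applies this hypothesis to the sub-phantom rooted at the $2$-clique $(Z_0\setminus\{z_j\})\cup\{z\}$ of depth $r-1$ -- still inside $G$, no restriction to a common neighborhood. If any sub-phantom returns a crystal, done. Otherwise each returns $g$ disjoint $(r-1)$-cliques complete to its root; a counting dichotomy over the $f+g$ choices of $z$ (each vertex $z$ is either a ``both-$j$ have non-neighbors in every clique'' vertex, giving crystal material, or a ``some clique extends by $z_j$'' vertex, giving an $r$-clique) then produces one of the two desired outcomes. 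The hypothesis that $G$ is $K_t$-free is used exactly once, at the top: with $r=t$ the clique alternative would yield a $K_t$, so the crystal alternative must hold. The decisive difference from your plan is that the paper's strengthened statement carries the partial information (accumulated cliques) forward through the induction instead of trying to localize the whole problem to a common-neighborhood subgraph -- the latter is precisely what the phantom structure does not support.
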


Assuming Theorems~\ref{thm:twtokalei}, \ref{thm:doublecone} and \ref{thm:crystal} to hold true,  we now embark on proving Theorems~\ref{thm:maincone2} and \ref{thm:maincrystal2}. First, we restate and prove the second as it is almost immediate:

\setcounter{section}{2}
\setcounter{theorem}{6}

\begin{theorem}
    For all integers $f,g,t\geq 1$, there exists an integer $\tau_2=\tau_2(f,g,t)\geq 1$ such that for every graph $G\in \mathcal{E}_t$ of treewidth more than $\tau$, there is an $(f,g)$-crystal in $G$.
\end{theorem}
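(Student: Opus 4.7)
The plan is to prove this by a direct combination of Theorem~\ref{thm:twtokalei} (which produces a phantom from large treewidth) and Theorem~\ref{thm:crystal} (which extracts a crystal from a phantom). Both of these will be proved later in the paper, but the final assembly at this point is almost mechanical, so I expect no real obstacle here.

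Concretely, I would proceed as follows. Given $f,g,t\geq 1$, let $\Omega(\cdot,\cdot,\cdot)$ be the function provided by Theorem~\ref{thm:twtokalei} and define
\[
\tau_2=\tau_2(f,g,t):=\Omega(f+g,\,t,\,t).
\]
Now let $G\in\mathcal{E}_t$ be a graph with $\tw(G)>\tau_2$. Applying Theorem~\ref{thm:twtokalei} with parameters $d:=f+g$, $r_1:=t$, we obtain a $2$-clique $Z_0\subseteq V(G)$ and a $(Z_0,f+g,t)$-phantom $\mf{p}$ in $G$. Since $G\in\mathcal{E}_t$, in particular $G$ is $K_t$-free, so Theorem~\ref{thm:crystal} applies to $\mf{p}$ and yields an $(f,g)$-crystal in $G$. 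This is exactly what we wanted.

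The only thing to double-check is the bookkeeping of the hypotheses. Theorem~\ref{thm:crystal} requires the phantom's parameters to be $(f+g,t)$ (i.e., $d=f+g$ vertices attached per edge, and depth $r=t$), and needs only that $G$ be $K_t$-free; both conditions are satisfied. Theorem~\ref{thm:twtokalei} provides a phantom rooted on a $2$-clique (the first half of its conclusion is exactly this), so no extra work is needed to ensure $Z_0$ is a $2$-clique rather than a $3$-clique. Thus the proof is immediate once the two forthcoming theorems are established, and all the real difficulty is deferred to the proofs of Theorems~\ref{thm:twtokalei} and \ref{thm:crystal} in Sections~\ref{sec:phantom} and \ref{sec:conjuring}.
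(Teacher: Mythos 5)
Your proof is correct and matches the paper's argument essentially verbatim: the same choice $\tau_2=\Omega(f+g,t,t)$, the same invocation of Theorem~\ref{thm:twtokalei} to get a $(Z_0,f+g,t)$-phantom rooted at a $2$-clique, and the same appeal to Theorem~\ref{thm:crystal}. Nothing to add.
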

\begin{proof}
Let $\tau_2(f,g,t)=\Omega(f+g,t,t)$
where $\Omega(\cdot,\cdot,\cdot,\cdot)$ comes from Theorem~\ref{thm:twtokalei}. Let $G\in \mathcal{E}_t$ be a graph of treewidth more than $\tau_2$. Then, by Theorem~\ref{thm:twtokalei}, 
there is a $2$-clique $Z_0$ as well as a $(Z_0,f+g,t)$-phantom in $G$. Since $G$ is $K_t$-free, it follows from Theorem~\ref{thm:crystal} that there is an $(f,g)$-crystal in $G$. This completes the proof of Theorem~\ref{thm:maincrystal2}.
\end{proof}

\setcounter{section}{3}
\setcounter{theorem}{3}

The proof of Theorem~\ref{thm:maincone2} needs a little more work. Note that, by definition, every $2$-tree $\nabla$ on three or more vertices has a degree-two ``simplicial'' vertex, that is, a vertex $z$ whose neighborhood is a $2$-clique. There is a useful strengthening of this fact: if $\nabla$ has four or more vertices, then we can arrange for $z$ to be a ``side vertex'' from a ``side crystal'' in $\nabla$.
 
Let us make this precise. Given a graph $H$, we say a vertex $z\in V(H)$ is \textit{crystallized} if there is a $2$-clique $\{z_1,z_2\}\subseteq N_H(z)$ with the following specifications. 
\begin{enumerate}[(C1), leftmargin=15mm, rightmargin=7mm]
\item\label{C1} $N_{H}(z)\setminus \{z_1,z_2\}$ is a non-empty stable in $G$.
    \item\label{C2} There exists a partition $(S_1,S_2)$ of $N_{H}(z)\setminus \{z_1,z_2\}$ such that for each $i\in \{1,2\}$ and every vertex $x\in S_i$, we have $N_H(x)=\{z_i,z\}$.
\end{enumerate}
In particular, every vertex in $N_{H}(z)\setminus \{z_1,z_2\}$ is a degree-two simplicial vertex in $H$. We deduce: 
\begin{theorem}\label{thm:crystallizedvertex} Every $2$-tree $\nabla$ with $|V(\nabla)|\geq 4$ has a crystallized vertex.
\end{theorem}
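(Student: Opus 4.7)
The plan is to proceed by induction on $h=|V(\nabla)|$. The base case $h=4$ is the diamond ($K_4$ minus an edge): either degree-$3$ vertex is crystallized, with the edge between its two degree-$3$ neighbors serving as $\{z_1,z_2\}$ and the unique degree-$2$ vertex being the singleton in $S_1$. For the inductive step with $h\geq 5$, pick a degree-$2$ simplicial vertex $v$ of $\nabla$ (guaranteed by any elimination ordering), let $\{a,b\}:=N_\nabla(v)$, and consider $\nabla':=\nabla\setminus\{v\}$, which is a $2$-tree on $h-1\geq 4$ vertices. By induction, $\nabla'$ has a crystallized vertex $z'$ with witnesses $(z_1',z_2',S_1',S_2')$, and the goal is to build a crystallized vertex of $\nabla$ from this data.

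\textbf{Case analysis.} Reinstating $v$ only alters the neighborhoods of $a$ and $b$ (each gaining $v$), and introduces $v$ itself with $N_\nabla(v)=\{a,b\}$. If $\{a,b\}\cap(\{z'\}\cup S_1'\cup S_2')=\emptyset$, then $z'$ remains crystallized in $\nabla$ with the same tuple---note that the definition imposes no condition on the neighborhoods of the witnesses $z_1',z_2'$, so having $\{a,b\}\cap\{z_1',z_2'\}\neq\emptyset$ is harmless. Otherwise, first suppose $z'=a$. Then $b\in N_{\nabla'}(a)$, so either $b$ equals some $z_i'$ (WLOG $i=1$), in which case we simply append $v$ to $S_1'$ since $N_\nabla(v)=\{a,b\}=\{z',z_1'\}$; or $b\in S_1'\cup S_2'$ (WLOG $b\in S_1'$), so $N_{\nabla'}(b)=\{z_1',a\}$ and hence $N_\nabla(b)=\{z_1',a,v\}$, in which case we \emph{re-center} the crystallized vertex to $z:=b$ with $z_1:=a$, $z_2:=z_1'$, $S_1:=\{v\}$, $S_2:=\emptyset$ (the triangle $\{b,a,z_1'\}$ is present because $az_1'\in E(\nabla')$). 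Finally, if $z'\notin\{a,b\}$ but, say, $a\in S_1'$, then $N_{\nabla'}(a)=\{z_1',z'\}$, and the edge $ab\in E(\nabla')$ forces $b=z_1'$ (since $b\neq z'$); one then checks that $z:=a$ is crystallized in $\nabla$ via $z_1:=b$, $z_2:=z'$, $S_1:=\{v\}$, $S_2:=\emptyset$. The only remaining combinatorial configuration, namely both $a,b\in S_1'\cup S_2'$, is impossible: if $a\in S_i'$ then $N_{\nabla'}(a)=\{z_i',z'\}$, forcing $b\in\{z_i',z'\}$, which contradicts $b\in S_1'\cup S_2'$ (by disjointness) together with $b\neq z'$.

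\textbf{Main obstacle.} The subtle step is the re-centering in the case $z'=a$ and $b\in S_1'$: there the original crystallized vertex $z'$ fails in $\nabla$ because the degree-$2$ simplicial neighbor $b$ acquires a third neighbor $v$, violating (C2). The remedy---swapping roles so that $b$ becomes the crystallized vertex while $v$ takes over as the new degree-$2$ ``attendant''---is the heart of the induction. The remaining verifications (stability of the new $S_1\cup S_2$, the prescribed two-element neighborhood for each of its vertices, and the fact that the chosen $\{z_1,z_2\}$ spans an edge) all reduce to the observations that $v$ is adjacent only to $a$ and $b$, that witnesses of $z'$ in $\nabla'$ are pairwise adjacent, and that the sets $\{z_1',z_2'\},S_1',S_2'$ are pairwise disjoint.
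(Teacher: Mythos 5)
Your proof is correct and follows essentially the same strategy as the paper's: induct on $|V(\nabla)|$, remove a degree-two simplicial vertex $v$ to get the $2$-tree $\nabla'$, obtain a crystallized vertex in $\nabla'$, and case on how $N_\nabla(v)$ meets the witness tuple, re-centering when $N_\nabla(v)$ hits a degree-two simplicial attendant or the center itself. The paper packages the non-trivial cases as two bullets keyed to the form of $N_\nabla(v)$, but the substance is the same, down to the partition $(\{v\},\emptyset)$ in the re-centering step and appending $v$ to the appropriate $S_i$ otherwise.

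One small slip in the base case: in the diamond, each degree-$3$ vertex has exactly one degree-$3$ neighbor (not two), and there are two degree-$2$ vertices (not a unique one). The correct $2$-clique $\{z_1,z_2\}$ in $N(z)$ for a degree-$3$ vertex $z$ is the other degree-$3$ vertex together with one of the degree-$2$ vertices, with the remaining degree-$2$ vertex as the singleton in the appropriate $S_i$. The conclusion that both degree-$3$ vertices are crystallized is nevertheless correct, and nothing downstream is affected.
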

\begin{proof}
    We proceed by induction on $|V(\nabla)|=h\geq 4$. If $h=4$, then $\nabla$ is a diamond and both degree-three vertices of $H$ are crystallized. Assume that $h>4$. Let $\varpi_{\nabla}$ be as in the definition of a $2$-tree,  let $v=\varpi_{\nabla}(1)$ and let $\nabla'=\nabla\setminus \{v\}$. Then $N_{\nabla}(v)\subseteq V(\nabla')$ is $2$-clique and $\nabla'$ is a $2$-tree on $h-1\geq 4$ vertices. By the induction hypothesis, $\nabla'$ has a crystallized $z$ in $\nabla'$. In particular, there exist a $2$-clique $\{z_1,z_2\}\subseteq N_{\nabla'}(z)$ and a partition $(S_1,S_2)$ of $N_{\nabla'}(z)\setminus \{z_1,z_2\}$ satisfying \ref{C1} and \ref{C2}. Now, if $N_{\nabla}(v)\cap (N_{\nabla'}[z]\setminus \{z_1,z_2\})=\emptyset$, then $z$ is a crystallized vertex in $\nabla$ (still with $\{z_1,z_2\}\subseteq N_{\nabla}(z)$ and $(S_1,S_2)$ satisfying \ref{C1} and \ref{C2}). Otherwise, since $N_{\nabla}(v)$ is a $2$-clique in $\nabla'$, there are two possibilities:
    \begin{itemize}
        \item There exists $i\in \{1,2\}$ and $x\in S_i$ such that either $N_{\nabla}(v)=\{x,z_i\}$ or $N_{\nabla}(v)=\{x,z\}$.
        \item There exists $i\in \{1,2\}$ such that $N_{\nabla}(v)=\{z_i,z\}$.
    \end{itemize}
    In the former case, $x$ is a crystallized vertex in $\nabla$ with the $2$-clique $\{z_i,z\}$ contained in $N_{\nabla}(x)$ and the partition $(\{v\},\emptyset)$ of $N_{\nabla}(x)\setminus \{z_i,z\}$ satisfying \ref{C1} and \ref{C2}. In the latter case, let $\{1,2\}\setminus \{i\}=\{i'\}$. Then $z$ is a crystallized vertex in $\nabla$ with the $2$-clique $\{z_1,z_2\}$ contained in $N_{\nabla}(z)$ and the partition $(S_i\cup \{v\},S_{i'})$ of $N_{\nabla}(x)\setminus \{z_1,z_2\}$ satisfying \ref{C1} and \ref{C2}. This completes the proof of Theorem~\ref{thm:crystallizedvertex}.
    \end{proof}

We are now ready to prove Theorem~\ref{thm:maincone2}, which we restate:

\setcounter{section}{2}
\setcounter{theorem}{2}

\begin{theorem}\label{thm:strongmaincone2}
    For all integers $f,t,h\geq 1$, there exists an integer $\tau_1=\tau_1(f,h,t)\geq $ such that for every graph $G\in \mathcal{E}_t$ of treewidth more than $\tau_1$, either $G$ contains every $2$-tree on $h+1$ vertices, or $G$ has a subgraph isomorphic to $\cone(\cone(T_{f,f}))$. 
\end{theorem}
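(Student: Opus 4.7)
The plan is to induct on $h$, decomposing the target $2$-tree $\nabla$ via Theorem~\ref{thm:crystallizedvertex} and executing each extension step via Theorem~\ref{thm:doublecone}, atop a phantom obtained from Theorem~\ref{thm:twtokalei}. Specifically, I would prove the following strengthened inductive statement: for all $f, t, h \geq 1$, there exist $d = d(f,h,t)$ and $r = r(f,h,t)$ such that for every $(K_{2,2}, K_t)$-free graph $G$ equipped with a $(Z_0, d, r)$-phantom $\mf{p}$ with $3$-clique base $Z_0 = \{z_1, z_2, z\}$, either $G$ contains $\cone(\cone(T_{f,f}))$ as a subgraph, or every $2$-tree $\nabla$ on at most $h+1$ vertices embeds as an induced subgraph of $G[Z_r]$ with $z$ avoided. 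Combined with Theorem~\ref{thm:twtokalei}, this yields Theorem~\ref{thm:strongmaincone2}.

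The base case $h \leq 2$ is direct: the only non-trivial case is $\nabla = K_3$, which we embed by taking $\{z_1, z_2\}$ together with a vertex in $\Gamma_1(\{z_1, z_2\})$, pruned by a Ramsey-style argument (via Theorem~\ref{classicalramsey}) to avoid adjacency with $z$; this pruning succeeds for $d$ large because the $K_{s,s}$- and $K_t$-freeness of $G$ prevent too many candidates from being common neighbors of $z$ and $\{z_1, z_2\}$.

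For the inductive step $h \geq 3$, Theorem~\ref{thm:crystallizedvertex} supplies a crystallized vertex $z^* \in V(\nabla)$ with associated $2$-clique $\{z_1^*, z_2^*\}$ and partition $(S_1^*, S_2^*)$; setting $\nabla' = \nabla \setminus (\{z^*\} \cup S_1^* \cup S_2^*)$ yields a $2$-tree on at most $h - 1$ vertices still containing $\{z_1^*, z_2^*\}$ as an edge. Applying the inductive hypothesis to $\nabla'$ with the truncated phantom $\mf{p}|_{Z_0, \ldots, Z_{r'}}$ (where $r' = r(f, h-1, t)$) produces an embedding $\phi \colon \nabla' \hookrightarrow G[Z_{r'}]$ with $z \notin \phi(\nabla')$. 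Writing $\{w_1, w_2\} = \phi(\{z_1^*, z_2^*\})$, I would select a helper $w_3 \in \Gamma_{r'+1}(\{w_1, w_2\}) \subseteq Z_{r'+1} \setminus Z_{r'}$ that is non-adjacent to $\phi(\nabla') \setminus \{w_1, w_2\}$; existence of such $w_3$ for $d$ sufficiently large follows from a Ramsey extraction leveraging $K_{s,s}$-freeness to bound candidates adjacent to any given vertex of $\phi(\nabla') \setminus \{w_1, w_2\}$. Now apply Theorem~\ref{thm:doublecone} with $Z = \phi(\nabla') \cup \{w_3\}$ (so $|Z| \leq h$), $Z_0' = \{w_1, w_2, w_3\}$, and the sub-phantom $\mf{p}[Z_0'; r'+1, f]$ (valid provided $r \geq r' + 1 + f$ and $d \geq f + \max(|S_1^*|, |S_2^*|) + 2^h t$); the required condition $Z_f' \cap Z = Z_0'$ holds because the new vertices of the sub-phantom live in $Z_{r'+2} \cup \cdots \cup Z_{r'+1+f}$, disjoint from $\phi(\nabla') \subseteq Z_{r'}$. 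Output (b) yields the desired $\cone(\cone(T_{f,f}))$ and terminates the proof; output (a) produces a $(w_1, w_2, 1, g)$-crystal whose crystallized vertex $z'$ and appropriately sized subsets of $S_{1, z'}, S_{2, z'}$ play the roles of $z^*, S_1^*, S_2^*$ with the correct adjacencies, by CR3 and the anticompleteness clause.

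The principal obstacle is the Ramsey extraction of the helper $w_3$, together with the accompanying coordination of phantom parameters across the recursion — essentially $r(f,h,t) \geq r(f,h-1,t) + f + 1$ and $d(f,h,t)$ large enough to absorb repeated pruning. The $K_{s,s}$- and $K_t$-freeness available in $\mathcal{E}_t$ are what make this feasible: a vertex $v \in \phi(\nabla') \setminus \{w_1, w_2\}$ adjacent to too many candidates would combine with $w_1$ (or $w_2$) and two non-adjacent such candidates to produce an induced $K_{2,2}$, whereas a clique among candidates is obstructed by $K_t$-freeness. A secondary subtlety is that the crystal vertex $z'$ from Theorem~\ref{thm:doublecone}(a) may coincide with the helper $w_3$; this merely fuses two roles and is handled by a brief case check verifying the final embedding of $\nabla$ remains induced.
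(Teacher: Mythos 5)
Your overall plan is the right one — induct on the $2$-tree using Theorem~\ref{thm:crystallizedvertex} to peel off a crystallized vertex, apply Theorem~\ref{thm:doublecone} along a sub-phantom of a phantom from Theorem~\ref{thm:twtokalei}, and re-attach via the resulting crystal — and this matches the paper's strategy in broad strokes. But there is a genuine gap in the way you set up the induction, and it is not a minor technicality.

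The problem is your choice $\nabla' = \nabla \setminus (\{z^*\} \cup S_1^* \cup S_2^*)$, which removes the crystallized vertex $z^*$ itself, together with $S_1^*, S_2^*$. After embedding $\nabla'$, you must manufacture a helper vertex $w_3 \in \Gamma_{r'+1}(w_1 w_2)$ with $N_{\phi(\nabla')}(w_3) = \{w_1, w_2\}$ in order to satisfy the hypothesis $N_Z(z) = \{z_1, z_2\}$ of Theorem~\ref{thm:doublecone}. Your Ramsey justification for $w_3$'s existence — ``a vertex $v$ adjacent to too many candidates produces a $K_{2,2}$ with $w_1$ or $w_2$'' — breaks down exactly when $v \in \phi(\nabla') \setminus \{w_1, w_2\}$ is adjacent to \emph{both} $w_1$ and $w_2$. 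In that case $\{v, w_1, w_2\}$ is a triangle, the candidates are complete to this triangle, and two non-adjacent candidates together with $v, w_1$ (or $v, w_2$) form a diamond, not an induced $K_{2,2}$; in fact the join of a triangle with a large stable set is a split graph in $\mathcal{E}_t$ for $t \geq 5$, so no member of $\mathcal{E}_t$-forbidden-structures appears. Since $\nabla'$ is a $2$-tree, a vertex $v$ adjacent to both endpoints of the edge $\{z_1^*, z_2^*\}$ is entirely routine (e.g.\ $\nabla' = K_3$), so this is not an edge case you can legislate away. There is no way to find $w_3$ with the tools you have invoked.

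The paper sidesteps this by choosing $\nabla' = \nabla \setminus (S_1 \cup S_2)$, \emph{keeping} $z^*$ in $\nabla'$, so that after the inductive embedding $\varphi$, the vertex $\varphi(z^*)$ itself plays the helper role: by construction $N_{\nabla'}(z^*) = \{z_1^*, z_2^*\}$, hence $N_{\phi(\nabla')}(\varphi(z^*)) = \{\varphi(z_1^*), \varphi(z_2^*)\}$ automatically, with no Ramsey extraction needed. Crucially, this requires the inductive statement to carry a phantom rooted at the \emph{entire} embedded copy $Z_0^j$ of $\nabla'$, not at a fixed $3$-clique $\{z_1, z_2, z\}$ in $Z_0$: the $3$-clique $\{\varphi(z_1^*), \varphi(z_2^*), \varphi(z^*)\}$ then sits inside $Z_0^j$, and the sub-phantom $\mf{p}^j[\{\varphi(z_1^*), \varphi(z_2^*), \varphi(z^*)\}; 0, f]$ feeds directly into Theorem~\ref{thm:doublecone}. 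Afterwards, $\varphi(z^*)$ is \emph{replaced} by the crystal center, and the output phantom is re-rooted at the new embedding $Z_0^i$ via $\mf{p}^j[Z_0^i; f, r_i]$. Your weaker inductive statement (phantom fixed at the original $Z_0$, embedding landing somewhere in $Z_{r'}$) is what forces the doomed helper extraction; strengthening it as the paper does is the correct repair.
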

\begin{proof}
  For every $i\in [h]$, let $r_i=(h-i)f$. Let $$\tau_1=\tau_1(f,h,t)=\Omega(f+h+2^ht,r_1,t)\geq 1$$
where $\Omega(\cdot,\cdot,\cdot)$ comes from Theorem~\ref{thm:twtokalei}. Let $G\in \mathcal{E}_t$ be a graph of treewidth more than $\tau_1$. From Theorem~\ref{thm:twtokalei} and the choice of $\tau_1, r_1$ and $r_2$, we deduce that:

\sta{\label{st:2-treeinphantomtobegin}There is a $2$-clique $Z_0^1$ as well as a $(Z^1_0,d,r_1)$-phantom in $G$. Also, if $h\geq 2$, then there is a $3$-clique $Z^2_0$ as well as a $(Z^2_0,d,r_2)$-phantom in $G$.}

Now, assume that $G$ has no subgraph isomorphic to $\cone(\cone(T_{f,f}))$. We wish to show that $G$ contains every $2$-tree on $h+1$ vertices. Instead, we prove the following stronger statement by induction on $i$:

\sta{\label{st:2-treeinphantom} For each $i\in [h]$ and every $2$-tree $\nabla$ on $i+1$ vertices, there is an induced subgraph $Z^i_0$ of $G$ isomorphic to $\nabla$ and a $(Z_0^i,f+h+2^ht,r_i)$-phantom $\mf{p}^i$ in $G$.}

To launch the induction, note that if $i\in \{1,2\}$, then $\nabla$ is an $(i+1)$-vertex complete graph, and so the result follows directly from \eqref{st:2-treeinphantomtobegin}. Thus, we may assume that $3\leq i\leq h$; in particular, $\nabla$ has at least four vertices. By Theorem~\ref{thm:crystallizedvertex}, there is a crystallized vertex $z\in V(\nabla)$, with $\{z_1,z_2\}\subseteq N_{\nabla}(z)$ as well as the partition $(S_1,S_2)$ of $N_{\nabla}(z)\setminus \{z_1,z_2,z\}$ satisfying \ref{C1} and \ref{C2}. Let $\nabla'=\nabla\setminus (S_1\cup S_2)$. It is straightforward to observe that $\nabla'$ is a $2$-tree. Let $|V(\nabla')|=j+1$. Since $S_1\cup S_2\neq \emptyset$, it follows that $|V(\nabla')|<|V(\nabla)|$, and so $2\leq j\leq i-1\leq h-1$. Consequently, we may apply the induction hypothesis, obtaining an induced subgraph $Z^j_0$ of $G$ isomorphic to $\nabla'$ as well as a $(Z_0^j,f+h+2^ht,r_j)$-phantom $\mf{p}^j=(Z^j_0,\ldots, Z^j_{(h-j)f}; \Gamma^j_k:k\in [(h-j)f])$ in $G$. 

Let $Z=Z^j_0$; then we have $|Z|=j+1\leq i\leq h$. Pick an isomorphism $\varphi:V(\nabla')\rightarrow V(Z)$. Let $Z_0=\{\varphi(z_1),\varphi(z_2),\varphi(z)\}\subseteq Z$; then we have $N_z(\varphi(z))=\{\varphi(z_1),\varphi(z_2)\}$. Since $Z_0\subseteq Z=Z^j_0$ and $(h-j)\geq 1$, it follows that $\mf{p}=\mf{p}^j[Z_0;0,f]$ is a $(Z_0,f+h+2^ht,f)$-phantom $(Z_0,\ldots, Z_f; \Gamma_k:k\in [f])$ in $G$. Moreover, since $Z_f\setminus Z_0\subseteq Z^j_f\setminus Z^j_0=Z^j_f\setminus Z$, it follows that $Z_f\cap Z=Z_0$. This allows for an application of Theorem~\ref{thm:doublecone} to $G, Z, Z_0$ and $\mf{p}$. From the assumption that $G$ has no subgraph isomorphic to $\cone(\cone(T_{f,f}))$, we conclude that Theorem~\ref{thm:doublecone}\ref{thm:doublecone_a} holds. Explicitly, there exists a $(\varphi(z_1),\varphi(z_2),1,h)$-crystal $\mf{c}$ in $G[Z_f]$ such that $V(\mf{c})$ is anticomplete to $Z\setminus Z_0$.

Now, since $V(\mf{c})$ is anticomplete to $Z\setminus Z_0$, and since $|S_1|,|S_2|\leq i\leq h$, it follows that there exists an induced subgraph $Z_0^i$ of $G$ which is isomorphic to $\nabla$ such that $(Z\setminus \{\varphi(z)\})\subseteq Z^i_0\subseteq (Z\setminus \{\varphi(z)\})\cup V(\mf{c})$. In particular, we have $Z^i_0\subseteq Z\cup Z_f\subseteq Z^j_f$. Recall also that $j\leq i-1\leq h-1$, and so  we have $f,r_i\in \{0,\ldots, r_j\}$ with $f+r_i\leq r_j$. As a result, $\mf{p}^j=\mf{p}^j[Z^i_0;f,r_i]$ is a $(Z^i_0,f+h+2^ht,r_i)$-phantom in $G$. In conclusion, we have shown that there is an induced subgraph $Z^i_0$ of $G$ isomorphic to $\nabla$ and a $(Z_0^i,f+h+2^ht,r_i)$-phantom $\mf{p}^i$ in $G$. This proves \eqref{st:2-treeinphantom}.
\medskip

Note that Theorem~\ref{thm:maincone2} is identical to \eqref{st:2-treeinphantom} for $i=h$, and so the proof is completed.
\end{proof}
\setcounter{section}{3}
\setcounter{theorem}{4}

It remains to prove Theorems~\ref{thm:twtokalei}, \ref{thm:doublecone} and \ref{thm:crystal}. We prove the latter two in the next section. Then, we prove Theorem~\ref{thm:twtokalei} in Sections~\ref{sec:conjuring}.

\section{Communing with a phantom}\label{sec:phantom}
This section is devoted to the proofs of Theorems~\ref{thm:doublecone} and \ref{thm:crystal}. Instead of each one, we prove a corresponding stronger statement tailored to an inductive argument. Specifically, instead of Theorem~\ref{thm:doublecone}, we prove the following. It is readily observed that Theorem~\ref{thm:doublecone} is a corollary of Theorem~\ref{thm:phantomtodoublecone} below -- we leave this to the reader to check.

\begin{theorem}\label{thm:phantomtodoublecone}
    Let $d,g,h,t\geq 1$ and $r\geq 0$ be integers. Let $G$ be a $(K_{2,2},K_t)$-free graph and let $Z\subseteq V(G)$ with $|Z|\leq h$. Let $Z_0=\{z_1,z_2,z\}\subseteq Z$ be a $3$-clique such that $N_Z(z)=\{z_1,z_2\}$. Let $\mf{p}=(Z_0,\cdots,Z_r;\Gamma_i:i\in [r])$ be a $(Z_0,d+g+2^ht,r)$-phantom in $G$ such that $Z_{r}\cap Z=Z_0$. Then one of the following holds.
\begin{enumerate}[{\rm(a)}]
        \item\label{thm:phantomtodoublecone_a} There exists a $(z_1,z_2,1,g)$-crystal $\mf{c}$ in $G[Z_r]$ such that $V(\mf{c})$ is anticomplete to $Z\setminus Z_0$.
        \item \label{thm:phantomtodoublecone_b} $G$ has a subgraph $U$ isomorphic to $T_{d
        ,r}$ which satisfies the following.
        \begin{itemize}
        \item $U$ contains $z$ as its root.
        \item $\{z_1,z_2\}$ is disjoint from and complete to $V(U)$ in $G$.
            \item Let $i\in [r]$, let $u\in V(U)$ be at distance $i$ in $U$ from $z$ and let $u^-$ be the parent of $u$ in $U$. Then $u\in \Gamma_{i}(u^-z_1)\cup \Gamma_{i}(u^-z_2)\subseteq Z_{i}\setminus Z_{i-1}$. In particular, we have $V(U)\cap Z=\{z\}$.
        \end{itemize}
    \end{enumerate}
\end{theorem}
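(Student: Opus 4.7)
The plan is to prove the theorem by induction on $r$. The base case $r=0$ is immediate: $T_{d,0}$ is a single vertex, so taking $U$ to be the single-vertex subgraph $\{z\}$ verifies (b) (the ``for each $i\in [r]$'' clause being vacuous), because $\{z_1,z_2\}$ is disjoint from and complete to $\{z\}$ by virtue of $Z_0$ being a $3$-clique.

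For the inductive step, I would first ``prune'' the sets $\Gamma_1(zz_1)$ and $\Gamma_1(zz_2)$ by discarding vertices with any neighbor in $Z\setminus Z_0$. Since $N_Z(z)=\{z_1,z_2\}$, every $w\in Z\setminus Z_0$ is nonadjacent to $z$, and so any two vertices of $\Gamma_1(zz_i)\cap N(w)$ are common neighbors of the nonadjacent pair $\{z,w\}$ and must be adjacent by $K_{2,2}$-freeness. Thus $\Gamma_1(zz_i)\cap N(w)$ is a clique, which together with $\{z,z_i\}$ yields a clique of size at most $t-1$ by $K_t$-freeness; so $|\Gamma_1(zz_i)\cap N(w)|\leq t-3$. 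Summing over $w\in Z\setminus Z_0$ and using $(h-3)(t-3)\leq 2^h t$, the subsets $\Gamma_1^*(zz_i)\subseteq \Gamma_1(zz_i)$ of vertices anticomplete to $Z\setminus Z_0$ satisfy $|\Gamma_1^*(zz_i)|\geq d+g$ for $i\in\{1,2\}$.

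Next, partition $\Gamma_1^*(zz_i)=A_i\cup B_i$, where $A_i=\Gamma_1^*(zz_i)\cap N(z_{3-i})$ (vertices adjacent to all of $z,z_1,z_2$) and $B_i$ is the rest (vertices adjacent to $z,z_i$ but not $z_{3-i}$). If $|A_1\cup A_2|<d$, then $|B_1|,|B_2|\geq g$, and taking $S=\{z\}$ with any $g$ vertices from each $B_i$ produces the $(z_1,z_2,1,g)$-crystal in $G[Z_1]\subseteq G[Z_r]$ required by (a), automatically anticomplete to $Z\setminus Z_0$ by the choice of $\Gamma_1^*$. Otherwise, choose $d$ distinct children $u_1,\ldots,u_d\in A_1\cup A_2$ of $z$ in the tree; for each $u_k$, apply the inductive hypothesis (at depth $r-1$) to the sub-phantom $\mf{p}[\{z_1,z_2,u_k\};1,r-1]$ with the swapped set $Z^{(k)}=(Z\setminus\{z\})\cup\{u_k\}$. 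The swap preserves $|Z^{(k)}|\leq h$, and since $u_k\in\Gamma_1^*$ is anticomplete to $Z\setminus Z_0$, it satisfies $N_{Z^{(k)}}(u_k)=\{z_1,z_2\}$; the condition $Z^{(k)}_{r-1}\cap Z^{(k)}=\{z_1,z_2,u_k\}$ also follows from $Z_r\cap Z=Z_0$. Crucially, $Z^{(k)}\setminus\{z_1,z_2,u_k\}=Z\setminus Z_0$, so any crystal delivered by the recursion is anticomplete to the correct set and immediately yields (a) for the original problem. If instead each call returns a subtree $U^{(k)}\cong T_{d,r-1}$ rooted at $u_k$, joining $z$ to each $u_k$ and gluing in the $U^{(k)}$ produces the desired $U\cong T_{d,r}$, with pairwise disjointness of the $U^{(k)}$'s ensured by the phantom's disjointness of the $\Gamma_i(e)$-images over distinct edges $e$.

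The principal obstacle is engineering the recursion so that it returns output compatible with the original triangle $\{z_1,z_2,z\}$ rather than with the new ``root'' $u_k$. The key device is the swap $Z\mapsto (Z\setminus\{z\})\cup\{u_k\}$: it keeps $|Z|$ within the threshold $h$ (so that the phantom edge-size $d+g+2^h t$ is preserved by the induction), preserves the hypothesis $N_Z(\text{new root})=\{z_1,z_2\}$, and identifies $Z^{(k)}\setminus Z_0^{(k)}$ with $Z\setminus Z_0$ so that crystals produced deeper in the recursion are automatically anticomplete to the original $Z\setminus Z_0$. The preliminary cleanup of $\Gamma_1$ using $(K_{2,2},K_t)$-freeness is precisely what makes this swap legitimate, and it is the only place where these two freeness hypotheses on $G$ are used directly.
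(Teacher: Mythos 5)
Your proposal is correct and follows essentially the same inductive strategy as the paper: prune $\Gamma_1(zz_1)$ and $\Gamma_1(zz_2)$ to vertices anticomplete to $Z\setminus Z_0$, then either build the crystal from the pruned sets or find $d$ vertices complete to $\{z_1,z_2,z\}$ and recurse via the same swap $Z\mapsto(Z\setminus\{z\})\cup\{u_k\}$ with the sub-phantom $\mf{p}[\{z_1,z_2,u_k\};1,r-1]$. The only minor variation is the counting in the pruning step (you bound $|\Gamma_1(zz_i)\cap N(w)|\leq t-3$ per $w\in Z\setminus Z_0$ by the clique observation, whereas the paper pigeonholes on the $2^{h-3}$ possible neighborhood patterns and then extracts a non-adjacent pair to produce a $K_{2,2}$) and your choice of children from $A_1\cup A_2$ rather than from one side only; both work.
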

\begin{proof}
   The proof is by induction on $r$. If $r=0$, then the subgraph $U$ of $G$ with $V(U)=\{z\}$ satisfies \ref{thm:phantomtodoublecone}\ref{thm:phantomtodoublecone_b}. Assume that $r\geq 1$. For each $i\in \{1,2\}$, let $K_i$ be the set of all vertices in $\Gamma_1(z_iz)$ with at least one neighbor in $Z\setminus Z_0$.

    \sta{\label{st:Lismall} We have $|K_1|,|K_2|< 2^ht$.}

    For suppose $|K_i|\geq 2^ht$ for some $i\in \{1,2\}$. Since  $|Z\setminus Z_0|<h$, it follows that there exists a $t$-subset $S$ of $K_i$ and a non-empty subset $Y$ of $Z\setminus Z_0$ such that for every $x\in S$, we have $N_{Z\setminus Z_0}(x)=Y$. Pick a vertex $y\in Y$; then $y,z$ are not adjacent in $G$. Since $G$ is $K_t$-free, it follows that there are two vertices $x,x'\in S$ which are not adjacent in $G$. Also, since $x,x'\in S\subseteq K_i\subseteq \Gamma_1(z_iz)$,  it follows that $z$ is adjacent to both $x$ and $x'$. But now $G[\{x,x',y,z\}]$ is isomorphic to $K_{2,2}$, a contradiction. This proves \eqref{st:Lismall}.
\medskip

By \eqref{st:Lismall}, there exist $L_1\subseteq \Gamma_1(z_2z)$ and $L_2\subseteq \Gamma_1(z_1z)$ with $|L_1|=|L_2|=d+g$ such that $L_1\cup L_2 $ is anticomplete to $Z\setminus Z_0$ in $G$. It follows that either 
\begin{itemize}
    \item for every $i\in \{1,2\}$, there exists $M_i\subseteq L_i$ with $|M_i|=g$ such that $z_i$ is anticomplete to $M_i$ in $G$; or
    \item there exist $j\in \{1,2\}$ and $N_j\subseteq L_j$ with $|N_j|=d$ such that $z_j$ is complete to $N_j$ in $G$.
\end{itemize}
In the former case, $\mf{c}=(M_1,z,M_2)$ is a $(z_1,z_2,1,g)$-crystal in $G[Z_1]$ such that $V(\mf{c})\subseteq L_1\cup L_2\cup \{z\}$ is anticomplete to $Z\setminus Z_0$, and so \ref{thm:phantomtodoublecone}\ref{thm:phantomtodoublecone_a} holds. Therefore, we may assume that the second bullet above holds, and by symmetry, we may also assume that $j=1$. Since $N_1\subseteq \Gamma_1(z_2z)$, it follows that $N_1$ is a $d$-subset of $V(G)\setminus (Z\cup \Gamma_1(z_1z_2))$ and $\{z_1,z_2\}$ is complete to $N_1$.

For every $z'\in N_1$, let 
$Z_{z'}=(Z\setminus \{z\})\cup \{z'\}$ and let $Z_{0,z'}=\{z_1,z_2,z'\}$; then we have $Z_{0,z'}\subseteq Z_1$. Since $r\geq 1$, it follows that $$\mf{p}_{z'}=\mf{p}[Z_{0,z'};1,r-1]=(Z_{0,z'},\cdots,Z_{r-1,z'};\Gamma_{i,z'}:i\in [r-1])$$
is
a $(Z_{0,z'},d+g+2^ht,r-1)$-phantom
in $G$ such that $Z_{i,z'}\subseteq Z_{i+1}$ for all $i\in [r-1]$. Moreover, since $Z_{r-1,z'}\subseteq Z_{r}$ and $Z_{r}\cap Z=Z_0$, it follows that:
$$Z_{r-1,z'}\cap Z_{z'}\subseteq Z_r\cap ((Z\setminus \{z\})\cup \{z'\})=\{z_1,z_2,z'\}=Z_{0,z'};$$
and thus $Z_{r-1,z'}\cap Z_{z'}=Z_{0,z'}$ (see Figure~\ref{fig:doublecone}). Consequently, from the induction hypothesis, we deduce that:

\sta{\label{st:ihfordoublecone} For every vertex $z'\in N_1$, one of the following holds.
 \begin{itemize}
\item There exists a $(z_1,z_2,1,g)$-crystal $\mf{c}$ in $G[Z_{r-1,z'}]$ such that $V(\mf{c})$ is anticomplete to $Z_{z'}\setminus Z_{0,z'}$.
        \item $G$ has a subgraph $U_{z'}$ isomorphic to $T_{d
        ,r-1}$ which satisfies the following.
        \begin{itemize}
        \item $U_{z'}$ contains $z'$ as its root.
             \item $\{z_1,z_2\}$ is disjoint from and complete to $V(U_{z'})$ in $G$.
            \item Let $i\in [r-1]$, let $u\in V(U_{z'})$ be at distance $i$ in $U_{z'}$ from $z'$ and let $u^-$ be the parent of $u$ in $U_{z'}$. Then $u\in \Gamma_{i,z'}(u^-z_1)\cup \Gamma_{i,z'}(u^-z_2)\subseteq Z_{i,z'}\setminus Z_{i-1,z'}$. In particular, we have $V(U_{z'})\cap Z_{z'}=\{z'\}$.
        \end{itemize}
 \end{itemize}
}
\begin{figure}[t!]
\centering
\includegraphics[scale=0.7]{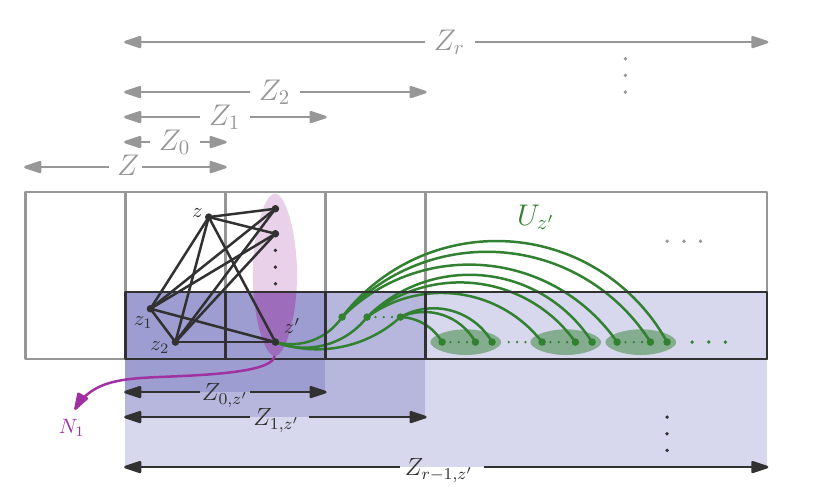}
    \caption{Proof of Theorem~\ref{thm:doublecone}.}
    \label{fig:doublecone}
\end{figure}

Note that if some vertex $z'\in N_1$ satisfies the first bullet of \eqref{st:ihfordoublecone}, then \ref{thm:phantomtodoublecone}\ref{thm:phantomtodoublecone_a} holds because $Z_{r-1,z'}\subseteq Z_r$ and $Z_{z'}\setminus Z_{0,z'}=Z\setminus Z_0$. Thus, we may assume that for every vertex $z'\in N_1$, the second bullet of \eqref{st:ihfordoublecone} holds (again, see Figure~\ref{fig:doublecone}). We also claim that:

\sta{\label{st:vdisjoint} The sets $\{V(U_{z'}):z'\in N_1\}$ are pairwise disjoint.}

Suppose for a contradiction that there are distinct vertices $z',z''\in N_1$ for which $U_{z'}$ and $U_{z''}$ share some vertex $u$. Then $u,z,z''$ are all distinct, and so there are $i',i''\in [r-1]$ such that $u$ is at distance $i'$ in $U_{z'}$ from $z'$, and $u$ is at distance $i''$ in $U_{z''}$ from $z''$. Let $u$ be chosen such that $\min\{i',i''\}$ is as small as possible. Let $u'$ and $u''$ be the parents of $u$ in $U_{z'}$ and $U_{z''}$, respectively. Then, by the choice of $u$, the vertices $u'$ and $u''$ are distinct. From the definition of $\mf{p}_{z'}$ and the second bullet of \eqref{st:ihfordoublecone}, it follows that:
$$u\in \Gamma_{i'+1}(u'z_1)\cup \Gamma_{i'+1}(u'z_2)\subseteq Z_{i'+1}\setminus Z_{i'};$$ 
$$u\in \Gamma_{i''+1}(u''z_1)\cup \Gamma_{i''+1}(u''z_2)\subseteq Z_{i''+1}\setminus Z_{i''}.$$
Particularly, we have $(Z_{i'+1}\setminus Z_{i'})\cap (Z_{i''+1}\setminus Z_{i''})\neq \emptyset$, which yields $i'=i''$. But then $u'z_1,u'z_2,u''z_1,u''z_2$ are pairwise distinct edges of $G[Z_{i'}]$ (because $u',u'',z_1,z_2$ are pairwise distinct vertices), for which we have
$$(\Gamma_{i'+1}(u'z_1)\cup \Gamma_{i'+1}(u'z_2))\cap (\Gamma_{i'+1}(u''z_1)\cup \Gamma_{i'+1}(u''z_2))\neq\emptyset.$$
This violates the second bullet of \ref{P2} from the definition of a phantom, hence completing the proof of \eqref{st:vdisjoint}.
\medskip

Now, let $U$ be the subgraph of $G$ with $V(U)=(\bigcup_{z'\in N_1}V(U_{z'}))\cup \{z\}$ and $E(U)=(\bigcup_{z'\in N_1}E(U_{z'}))\cup \{zz':z'\in N_1\}$. Since $z$ is complete to $N_1$ and $|N_1|=d$, it follows from the second bullet of \eqref{st:ihfordoublecone} (specifically, the first dash) that $U$ is isomorphic to $T_{d,r}$ and $U$ contains $z$ as the root. Also, from the second dash of the second bullet of \eqref{st:ihfordoublecone} and the fact that $\{z_1,z_2\}$ is complete to $z$, it follows that $\{z_1,z_2\}$ is complete to $V(U)$ in $G$. In conclusion, we have shown that $U$ satisfies the first and the second bullets of \ref{thm:phantomtodoublecone}\ref{thm:phantomtodoublecone_b}.

For the third bullet of \ref{thm:phantomtodoublecone}\ref{thm:phantomtodoublecone_b}, let $i\in [r]$, let $u\in V(U)$ be at distance $i$ in $U$ from $z$ and let $u^-$ be the parent of $u$ in $U$. We wish to show that $u\in \Gamma_{i}(u^-z_1)\cup \Gamma_{i}(u^-z_2)$. To see this, note that if $i=1$, then we have $u^-=z$ and $u\in N_1\subseteq L_1\subseteq \Gamma_1(u^-z_2)$, as required. Otherwise, there exists $z'\in N_1$ such that $u,u^-\in V(U_{z'})$, where $u$ is at distance $i-1\in [r-1]$ from $z'$ is $U_{z'}$ and $u^-$ is the parent of $u$ in $U_{z'}$. From the second bullet of \eqref{st:ihfordoublecone} (specifically, the second dash) along with the definition of  $\mf{p}_{z'}$, we deduce that:
$$u\in \Gamma_{i-1,z'}(u^-z_1)\cup \Gamma_{i-1,z'}(u^-z_2)=\Gamma_{i}(u^-z_1)\cup \Gamma_{i}(u^-z_2).$$
Hence, the second bullet of \ref{thm:phantomtodoublecone}\ref{thm:phantomtodoublecone_b} holds for $U$. This completes the proof of Theorem~\ref{thm:phantomtodoublecone}.\end{proof}

Next, we prove the following strengthening of Theorem~\ref{thm:crystal}. The reader can easily check that Theorem~\ref{thm:phantomtocrystal} below does imply Theorem~\ref{thm:crystal}.

\begin{theorem}\label{thm:phantomtocrystal}
Let $f,g\geq 1$ and $r\geq 0$ be integers, let
  $G$ be a graph and let $Z_0$ be a $2$-clique in $G$. Let $\mf{p}=(Z_0,\cdots,Z_r;\Gamma_i:i\in [r])$ be a $(Z_0,f+g,r)$-phantom in $G$. Then one of the following holds.
    \begin{enumerate}[{\rm (a)}]
        \item\label{thm:phantomtocrystal_a} There is an $(f,g)$-crystal in $G[Z_{r}]$.
        \item\label{thm:phantomtocrystal_b} There are $g$ pairwise disjoint $r$-cliques $K_1,\ldots, K_g\subseteq Z_{r}\setminus Z_0$ in $G$ such that
        $Z_0$ is complete to $K_1\cup\cdots \cup K_g$.
    \end{enumerate}
\end{theorem}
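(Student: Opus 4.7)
My plan is to prove the statement by induction on $r$. The base cases $r\in\{0,1\}$ both fall under outcome~(b): for $r=0$ take $K_1=\cdots=K_g=\emptyset$, and for $r=1$ pick any $g$ of the $f+g$ vertices of $\Gamma_1(z_1z_2)$ as singleton cliques (each is complete to $Z_0$ by the definition of a phantom).

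For the inductive step $r\geq 2$, set $W=\Gamma_1(z_1z_2)$, which has size $f+g$. For each $w\in W$ and each $i\in\{1,2\}$ I apply the inductive hypothesis to the sub-phantom $\mf{p}^i_w:=\mf{p}[\{z_i,w\};1,r-1]$, which is a $(\{z_i,w\},f+g,r-1)$-phantom (note that $\{z_i,w\}$ is indeed a $2$-clique since $w\sim z_i$). If any of these $2|W|$ applications returns an $(f,g)$-crystal, we are done. Otherwise every $\mf{p}^i_w$ supplies $g$ pairwise disjoint $(r-1)$-cliques $K^{w,i}_1,\ldots,K^{w,i}_g\subseteq (Z^i_w)_{r-1}\setminus\{z_i,w\}$, each complete to $\{z_i,w\}$.

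I then classify $w\in W$ as \emph{good} if some $K^{w,i}_j$ is complete (in $G$) to $z_{3-i}$, and \emph{bad} otherwise. If at least $g$ vertices of $W$ are good, each such $w$ yields an $r$-clique $L^w:=K^{w,i_w}_{j_w}\cup\{w\}\subseteq Z_r\setminus Z_0$ complete to $Z_0$ (since $w\sim z_1,z_2$ and $K^{w,i_w}_{j_w}$ is complete to both $z_{i_w}$ and $z_{3-i_w}$); any $g$ such $L^w$'s prove~(b). Otherwise at least $(f+g)-(g-1)=f+1$ vertices of $W$ are bad. Picking any $f$ of them as the spoke set $S$, for each $w\in S$, $i\in\{1,2\}$ and $j\in[g]$ I select some $v^{w,i}_j\in K^{w,i}_j$ with $v^{w,i}_j\not\sim z_{3-i}$ (existence guaranteed by badness) and put $S_{i,w}=\{v^{w,i}_j:j\in[g]\}$; the tuple $(S_{1,w},w,S_{2,w}:w\in S)$ is then a $(z_1,z_2,f,g)$-crystal witnessing~(a).

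The bookkeeping burden—and the main place where care is needed—is verifying pairwise disjointness of the cliques $L^w$ in the good case and of the $2f$ sets $\{S_{1,w},S_{2,w}:w\in S\}$ in the bad case. The key underlying fact, proved by a short induction on depth using that $\Gamma$ sends distinct edges to disjoint sets, is that for distinct $w,w'\in W$ and any $i,i'\in\{1,2\}$ the sub-phantom chains $(Z^i_w)_{r-1}$ and $(Z^{i'}_{w'})_{r-1}$ meet only inside $\{z_1,z_2\}$, while for the same $w$ we have $(Z^1_w)_{r-1}\cap(Z^2_w)_{r-1}=\{w\}$; since all cliques and leaf-sets in question avoid $\{z_1,z_2,w\}$ by construction, the required disjointness follows immediately.
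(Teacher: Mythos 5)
Your proof is correct and follows essentially the same route as the paper: apply the inductive hypothesis to the $2(f+g)$ sub-phantoms rooted at the edges $z_iw$ for $w\in\Gamma_1(z_1z_2)$, then pigeonhole (your ``good/bad'' classification is exactly the paper's dichotomy between the $f$-subset $X$ and the $g$-subset $\{y_1,\ldots,y_g\}$) yields either outcome (b) via the $r$-cliques $K^{w,i_w}_{j_w}\cup\{w\}$ or outcome (a) via a crystal whose spokes are $f$ bad vertices. The extra base case $r=1$ (subsumed by the inductive step applied to $0$-phantoms) and the slightly more explicit phrasing of the disjointness of the sub-phantom chains are purely cosmetic differences.
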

\begin{proof}
    The proof is by induction on $r$. Note that \ref{thm:phantomtocrystal}\ref{thm:phantomtocrystal_b} holds trivially for $r=0$. Assume that $r\geq 1$. Let $Z_0=\{z_1,z_2\}$. Let $j\in \{1,2\}$ and $z\in \Gamma_1(z_1z_2)$ be fixed. Define $Z^j_{0,z}=(Z_0\setminus \{z_j\})\cup \{z\}$; then $Z^j_{0,z}\subseteq Z_1$ is a $2$-clique in $G$. Since $r\geq 1$, it follows that $$\mf{p}^j_{z}=\mf{p}[Z^j_{0,z};1,r-1]=(Z^j_{0,z},\cdots,Z^j_{r-1,z};\Gamma_{i,z}:i\in [r-1])$$
is
a $(Z^j_{0,z},f+g,r-1)$-phantom
in $G$ such that $Z^j_{i,z}\subseteq Z_{i+1}$ for all $i\in [r-1]$. We deduce from the induction hypothesis applied to $\mf{p}^j_{z}$ that:

\sta{\label{st:ihforcrystal} For every $j\in \{1,2\}$ and every $z\in \Gamma_1(z_1z_2)$, one of the following holds.
 \begin{itemize}
 \item There is an $(f,g)$-crystal in $G[Z^j_{r-1,z}]$.
        \item 
        There are $g$ pairwise disjoint $(r-1)$-cliques $K^j_{1,z},\ldots, K^j_{g,z}\subseteq Z^j_{r-1,z}\setminus Z^j_{0,z}\subseteq Z_{r}\setminus Z_0$ in $G$ such that
        $Z^j_{0,z}$ is complete to $K^j_{1,z}\cup\cdots \cup K^j_{g,z}$.
 \end{itemize}
}
Now, if there exist $j\in \{1,2\}$ and $z\in \Gamma_1(z_1z_2)$ which satisfy the first bullet of \eqref{st:ihforcrystal}, then \ref{thm:phantomtocrystal}\ref{thm:phantomtocrystal_a} holds because $Z^j_{r-1,z}\subseteq Z_r$. Consequently, we may assume that for all $j\in \{1,2\}$ and $z\in \Gamma_1(z_1z_2)$, the second bullet of \eqref{st:ihforcrystal} holds. Henceforth, for every $j\in \{1,2\}$ and every $z\in \Gamma_1(z_1z_2)$, let  $K^j_{1,z},\ldots, K^j_{g,z}$
be the $g$ pairwise disjoint $(r-1)$-cliques given by the second bullet of \eqref{st:ihforcrystal} (see Figure~\ref{fig:crystalphantom}).
\begin{figure}
    \centering
    \includegraphics[scale=0.7]{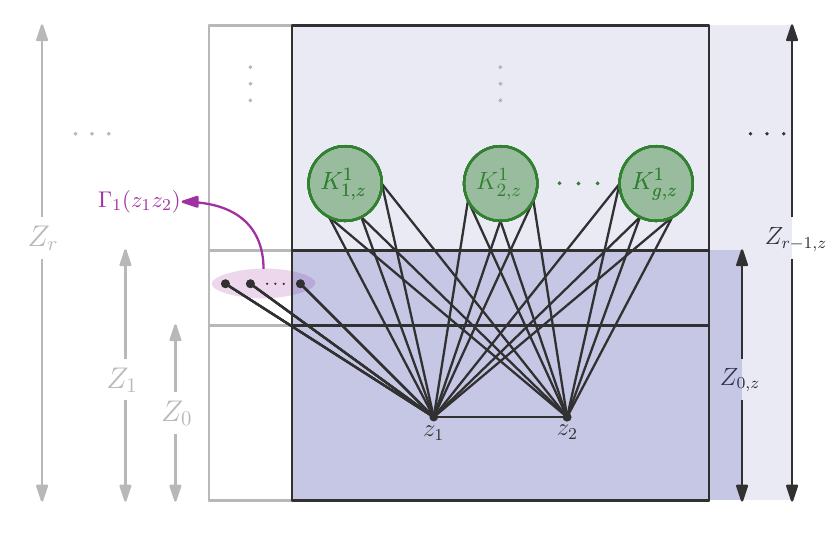}
    \caption{Proof of Theorem~\ref{thm:phantomtocrystal}.}
    \label{fig:crystalphantom}
\end{figure}
\medskip

Moreover, the following is immediate from the definition $\mf{p}^j_{z}$ and the fact that $\mf{p}$ satisfies (the second bullet of) \ref{P2} in the definition of a phantom:

\sta{\label{st:vdisjoint2} The $2(f+g)$ sets $\{Z^j_{r-1,z}\setminus Z^j_{0,z}:j\in \{1,2\}, z\in \Gamma_1(z_1z_2)\}$ are pairwise disjoint.}

Now, since $|\Gamma_1(z_1z_2)|=f+g$, it follows that either
\begin{itemize}
    \item there is an $f$-subset $X$ of $\Gamma_1(z_1z_2)$ such that for every $x\in X$, every $j\in \{1,2\}$ and every $k\in [g]$, the vertex $z_j$ has a non-neighbor $u^j_{k,x}\in K^j_{k,x}$; or
    \item there is a $g$-subset $\{y_1,\ldots, y_g\}$ of $\Gamma_1(z_1z_2)$ such that for every $i\in [g]$, there exist $j_i\in \{1,2\}$ and $k_i\in [g]$ for which $z_{j_i}$ is complete to $K^{j_i}_{k_i,y_i}$.
\end{itemize}
In the former case, for each $x\in X$, let $$U_{1,x}=\{u^1_{2,x}\ldots, u^2_{g,x}\};$$
$$U_{2,x}=\{u^1_{1,x}\ldots, u^1_{g,x}\}.$$
Then, the second bullet of \eqref{st:ihforcrystal} combined with \eqref{st:vdisjoint2} implies that $(U_{1,x},x,U_{2,x}:x\in X)$ is a $(z_1,z_2,f,g)$-crystal in $G[Z_r]$, and so \ref{thm:phantomtocrystal}\ref{thm:phantomtocrystal_a} holds. 

In the latter case, for each $i\in [g]$, let
$$K_i=K^{j_i}_{k_i,y_i}\cup \{y_i\}.$$
Then, by the second bullet of \eqref{st:ihforcrystal} combined with \eqref{st:vdisjoint2}, $K_1,\ldots, K_g\subseteq Z_r\setminus Z_0$ are $g$ pairwise disjoint $r$-cliques in $G$ such that $Z_0$ is complete to $K_1\cup \cdots \cup K_g$. Hence, \ref{thm:phantomtocrystal}\ref{thm:phantomtocrystal_b} holds. This completes the proof of Theorem~\ref{thm:phantomtocrystal}.
\end{proof}

\section{Conjuring a phantom}\label{sec:conjuring}
In this section, we prove:

\setcounter{section}{3}
\setcounter{theorem}{0}
\begin{theorem}For all integers $d,t
\geq 1$ and $r_1\geq 0$, there exists an integer $\Omega=\Omega(d,r_1,t)\geq 1$ with the following property. Let $G\in \mathcal{E}_t$ be a graph of treewidth more than $\Omega$. Then there is a $2$-clique $Z_0^1$ as well as a $(Z^1_0,d,r_1)$-phantom in $G$. Consequently, for every $r_2\in [r_1-1]$, there is a $3$-clique $Z^2_0$ as well as a $(Z^2_0,d,r_2)$-phantom in $G$.
\end{theorem}
\setcounter{section}{5}
\setcounter{theorem}{0}

This relies on a number of definitions and results, listed below, from our earlier papers with several coauthors \cite{twviii, twvii, twxi}.

\begin{theorem}[Abrishami, Alecu, Chudnovsky, Hajebi and Spirkl, see Corollary 2.4 in \cite{twviii} and also Theorem 6.5 in \cite{twvii}]\label{thm:noblocksmalltw_Ct}
For all integers $k,t\geq 1$, there exists an integer $\beta=\beta(k,t)$ such that every (theta, prism, $K_t$)-free graph of treewidth more than $\beta$ has two non-adjacent vertices $a,b$ as well as a collection $\mathcal{P}$ of $k$ pairwise internally disjoint paths in $G$ from $a$ to $b$. \end{theorem}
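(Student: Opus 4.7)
The plan is to prove the main statement (existence of a $2$-clique $Z_0^1$ together with a $(Z_0^1,d,r_1)$-phantom) by induction on $r_1$, and to deduce the consequence about $3$-cliques as a corollary.

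For the corollary, suppose the main part has produced a $2$-clique $Z_0^1=\{z_1,z_2\}$ and a $(Z_0^1,d,r_1)$-phantom $\mf{p}=(Z_0^1,\ldots,Z_{r_1};\Gamma_i:i\in[r_1])$ in $G$. If $r_1\leq 1$ then $[r_1-1]=\emptyset$ and there is nothing to prove. Otherwise, since $|\Gamma_1(z_1z_2)|=d\geq 1$, pick any $z\in\Gamma_1(z_1z_2)$; by the definition of $\Gamma_1$, both $z_1$ and $z_2$ are adjacent to $z$, so $Z_0^2:=\{z_1,z_2,z\}\subseteq Z_1$ is a $3$-clique. For every $r_2\in[r_1-1]$ one has $1+r_2\leq r_1$, so the sub-phantom $\mf{p}[Z_0^2;1,r_2]$ is a well-defined $(Z_0^2,d,r_2)$-phantom, which is what we need.

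For the main statement, the base case $r_1=0$ is immediate: set $\Omega(d,0,t)=1$, so that any $G\in\mathcal{E}_t$ of treewidth more than $1$ contains an edge $Z_0^1$, and the one-tuple $(Z_0^1)$ is trivially a $(Z_0^1,d,0)$-phantom. For the inductive step, assume $\Omega(d,r_1-1,t)$ is defined, choose a large integer $k=k(d,r_1,t)$ (to be determined by the cleaning below), and set $\Omega(d,r_1,t):=\beta(k,t)$, where $\beta$ comes from Theorem~\ref{thm:noblocksmalltw_Ct}. Given $G\in\mathcal{E}_t$ with $\tw(G)>\Omega(d,r_1,t)$, that theorem produces two non-adjacent vertices $a,b\in V(G)$ and a collection $\mathcal{P}$ of $k$ pairwise internally disjoint paths between $a$ and $b$. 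The plan is to mine $\mathcal{P}$, together with the forbidden-induced-subgraph constraints of $\mathcal{E}_t$, in three sub-steps. First, clean $\mathcal{P}$: by $K_{2,2}$-freeness the common neighbors of the non-adjacent pair $\{a,b\}$ form a clique, which has size less than $t$ by $K_t$-freeness, so at most $t-1$ paths in $\mathcal{P}$ have length~$2$ and we discard these; iterated applications of Theorems~\ref{classicalramsey} and~\ref{ramsey2} then homogenize the remaining length-$\geq 3$ paths in length, in attachment pattern near $a$ and $b$, and in pairwise interaction pattern. Second, exploit theta-, prism-, and even-wheel-freeness: three length-$\geq 2$ paths with pairwise anticomplete interiors would form a theta, so the homogenized paths admit many ``crossing'' edges, from which we extract an edge $\{u,v\}\in E(G)$ together with at least $d$ pairwise structured common neighbors of $u$ and $v$, and a residual subgraph of $G$ (inside the union of the cleaned paths) whose treewidth still exceeds $\Omega(d,r_1-1,t)$. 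Third, apply the inductive hypothesis to this residual subgraph to obtain a phantom of depth $r_1-1$, and patch it onto $\{u,v\}$ and its $d$ common neighbors to produce the desired $(Z_0^1,d,r_1)$-phantom.

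The main obstacle is the second sub-step. The phantom definition demands, for \emph{every} edge in \emph{every} layer, $d$ private common neighbors that are pairwise disjoint across edges (condition~\ref{P2}); achieving this simultaneously with preserving a high-treewidth residual subgraph on which to recurse is the structural heart of the construction and is where the restrictive geometry of $\mathcal{E}_t$ must be leveraged. I expect the full execution to rely on further structural results from the series~\cite{twviii,twvii,twxi} beyond Theorem~\ref{thm:noblocksmalltw_Ct} — in particular, refined information about how high treewidth in $\mathcal{E}_t$ propagates through a system of internally disjoint paths between two pinned vertices — combined with several layers of Ramsey-type pigeonholing to enforce the disjointness and privacy demanded by the phantom definition.
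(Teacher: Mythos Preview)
Your proposal is not a proof of the stated theorem at all. Theorem~\ref{thm:noblocksmalltw_Ct} is a cited result from \cite{twviii,twvii}; the present paper does not prove it and simply invokes it as a black box. Your proposal even \emph{uses} Theorem~\ref{thm:noblocksmalltw_Ct} as an ingredient (``set $\Omega(d,r_1,t):=\beta(k,t)$, where $\beta$ comes from Theorem~\ref{thm:noblocksmalltw_Ct}''), so you are clearly attempting to prove a different statement --- namely Theorem~\ref{thm:twtokalei}, the existence of a $(Z_0^1,d,r_1)$-phantom in every $G\in\mathcal{E}_t$ of large treewidth.

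Even viewed as a proof of Theorem~\ref{thm:twtokalei}, the approach has a genuine gap and diverges from the paper. The paper does \emph{not} induct on $r_1$ at the top level. Instead, it applies Theorem~\ref{thm:noblocksmalltw_Ct} once, then Theorem~\ref{thm:kaleidoscopeexists} once, to obtain a single large kaleidoscope $3$-mirroring a $2$-clique $Z_0$; the entire phantom is then built inside that one kaleidoscope, layer by layer, via Lemma~\ref{lem:gettingphantom} and Theorem~\ref{thm:gettingphantom}. The kaleidoscope is the high-treewidth reservoir that persists across all $r_1$ layers; Theorem~\ref{thm:evenwheeltheta} is what guarantees common neighbors for every edge in each layer, and Theorems~\ref{banana} and~\ref{thm:difftodiff} are what keep the kaleidoscope sufficiently large after each layer is extracted. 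Your scheme instead tries to recurse on treewidth: extract one edge with $d$ common neighbors, find a ``residual subgraph'' of treewidth exceeding $\Omega(d,r_1-1,t)$, and invoke the inductive hypothesis there. But the inductive hypothesis only promises \emph{some} $2$-clique and a phantom rooted at it, with no control over which $2$-clique --- so there is no mechanism for ``patching'' the recursively obtained phantom onto your chosen edge $\{u,v\}$. Moreover, you give no argument that a high-treewidth residual subgraph survives; a union of internally disjoint paths between two vertices is series-parallel once the endpoints are removed, so the residual-treewidth step is exactly the point that requires the kaleidoscope machinery you have not invoked. Your derivation of the $3$-clique consequence from the $2$-clique case, however, is correct and matches the paper.
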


\begin{theorem}[Abrishami, Alecu, Chudnovsky, Hajebi and Spirkl \cite{twviii}]\label{banana}
    For all integers $t,\nu\geq 1$, there exists an integer $\psi=\psi(t,\nu)\geq 1$ with the following property. Let $G$ be a (theta, prism, $K_t$)-free graph, let $a,b\in V(G)$ be distinct and non-adjacent and let $\mathcal{P}$ be a collection of pairwise internally disjoint paths in $G$ from $a$ to $b$ with $|\mathcal{P}|\geq \psi$. For each $P\in \mathcal{P}$, let $x_{P}$ be the neighbor of $a$ in $P$ (so $x_P\neq b$). Then there exist $P_1,\ldots, P_{\nu}\in \mathcal{P}$ such that:
    \begin{itemize}
    \item $\{x_{P_1},\ldots, x_{P_{\nu}},b\}$ is a stable set in $G$; and
    \item for all $i,j\in [\nu]$ with $i<j$, $x_{P_i}$ has a neighbor in $P_j^*\setminus \{x_{P_j}\}$.
    \end{itemize}
\end{theorem}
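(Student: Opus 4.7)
The plan is a two-phase argument: first produce the stable-set condition of bullet~1 via Ramsey and theta-freeness, then extract the monotone attachment sequence of bullet~2 by induction on $\nu$, with the inductive step driven by theta- and prism-freeness.

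\textbf{Phase~1 (stable set).} Apply Theorem~\ref{classicalramsey} to $\{x_P:P\in\mathcal{P}\}$. Since $G$ is $K_t$-free, a sufficiently large $\mathcal{P}$ yields a sub-collection $\mathcal{P}_0\subseteq \mathcal{P}$ with $\{x_P:P\in\mathcal{P}_0\}$ stable. Next, observe that at most two paths of $\mathcal{P}_0$ can have length~$2$: three such paths $a\dd x\dd b$ with pairwise non-adjacent middle vertices would form a theta between $a$ and $b$. Discard these, so every remaining path has length $\geq 3$, forcing $x_P$ to be non-adjacent to $b$ (since $P$ is induced). This gives the required stable set $\{x_P:P\in\mathcal{P}_0\}\cup\{b\}$, taking care of bullet~1.

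\textbf{Phase~2 (monotone attachments).} Proceed by induction on $\nu$; the case $\nu=1$ is trivial. For the inductive step, it suffices to locate $P_1\in\mathcal{P}_0$ together with a sub-collection $\mathcal{P}_1\subseteq\mathcal{P}_0\setminus\{P_1\}$ of size at least $\psi(t,\nu-1)$ such that $x_{P_1}$ has a neighbor in $Q^*\setminus\{x_Q\}$ for every $Q\in\mathcal{P}_1$; applying the induction hypothesis inside $\mathcal{P}_1$ then supplies $P_2,\dots,P_\nu$ and completes the sequence. To find $P_1$, suppose for contradiction that no $P\in\mathcal{P}_0$ has this property: every $P$ attaches, via $x_P$, to only a bounded number of other paths' interiors. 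A bipartite-Ramsey / averaging extraction then yields an arbitrarily large sub-collection $\{Q_1,\dots,Q_m\}\subseteq\mathcal{P}_0$ in which $x_{Q_i}$ has \emph{no} neighbor in $Q_j^*$ for all $i\neq j$.

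\textbf{Main obstacle.} Three such paths $Q_1,Q_2,Q_3$ nearly realize a theta between $a$ and $b$: they are internally disjoint of length $\geq 3$, and the $x$'s are pairwise non-adjacent and also non-adjacent to the other paths' interiors. What is missing is pairwise anticompleteness of the \emph{full} interiors $Q_i^*$, not merely of the single vertices $x_{Q_i}$. To close this gap, the plan is to iterate the Ramsey / discard procedure along the paths: after controlling $x_P$'s, refine based on the next interior vertex of each path, and so on, invoking prism-freeness to prevent long ``interior attachment sequences'' from surviving all refinements (any such surviving sequence, together with the triangles that appear naturally near $a$ and near $b$, should yield a prism). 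Once prism-freeness eliminates iterated interior attachments, theta-freeness delivers the final contradiction. The principal technical hurdle is to make this iteration quantitative: since paths may be arbitrarily long, one must show that after a bounded number of Ramsey steps -- depending only on $t$ and $\nu$ -- the process terminates in a theta or a prism, which then determines the value of $\psi(t,\nu)$.
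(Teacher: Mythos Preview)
The paper does not contain a proof of this statement: Theorem~\ref{banana} is quoted from \cite{twviii} and used as a black box (in the proof of Lemma~\ref{lem:gettingphantom}). So there is no ``paper's own proof'' to compare your proposal against; any comparison would have to be with the argument in \cite{twviii}.

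That said, your proposal has a genuine gap, which you yourself flag but do not close. Phase~1 is fine. In Phase~2, the inductive reduction is sound \emph{provided} you can show that if no $x_P$ attaches to many other interiors, then you reach a contradiction. Your plan for this contradiction is to iterate a Ramsey/refinement procedure ``along the paths'' to eventually force pairwise anticomplete interiors (and hence a theta). But the paths have unbounded length, so a vertex-by-vertex iteration down each path cannot terminate in a number of steps depending only on $t$ and $\nu$; you need some structural reason why only boundedly many refinement rounds suffice. Your appeal to prism-freeness is vague: you say ``any such surviving sequence, together with the triangles that appear naturally near $a$ and near $b$, should yield a prism,'' but no triangles have been exhibited (indeed you arranged $\{x_P\}\cup\{b\}$ to be stable), and it is not clear what configuration you have in mind. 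Without a concrete mechanism that bounds the depth of the iteration independently of the path lengths, the argument does not go through. A workable route is to abandon the vertex-by-vertex iteration and instead analyse, for a fixed pair $Q_i,Q_j$, the \emph{last} attachment of $Q_i^*$ into $Q_j$ (or vice versa), using theta/prism-freeness to constrain how two long internally disjoint paths between $a$ and $b$ can interact; this is the kind of ``three-path'' analysis that typically drives such results, and it does not require walking down the paths one vertex at a time.
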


\begin{theorem}[Alecu, Chudnovsky, Hajebi and Spirkl, see Theorem 3.2 in \cite{twxi}]\label{thm:evenwheeltheta}
 Let $G$ be a (theta, even-wheel)-free graph, let $C$ be a hole in $G$ and let $z_1,z_2\in V(G)\setminus C$ be distinct and adjacent, each with at least three neighbors in $C$. Then $z_1$ and $z_2$ have a common neighbor in $C$.
\end{theorem}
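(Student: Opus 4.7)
The plan is to argue by contradiction: suppose $z_1$ and $z_2$ have no common neighbor in $C$, and set $A=N_C(z_1)$, $B=N_C(z_2)$, so that $A\cap B=\emptyset$ and $|A|,|B|\ge 3$. The first ingredient is a parity observation: since $G$ is even-wheel-free, any vertex with at least four neighbors in a hole must have an odd number of neighbors there, so applying this to $C$ with each of $z_1$ and $z_2$ shows that both $|A|$ and $|B|$ are odd.

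Next I will list the vertices of $A$ in their cyclic order around $C$ as $a_1,\dots,a_{|A|}$ (indices mod $|A|$), and for each $i$ let $P_i$ be the arc of $C$ from $a_i$ to $a_{i+1}$ whose interior contains no vertex of $A$. Whenever $P_i$ has length at least $2$, the cycle $D_i = z_1\dd a_i\dd P_i\dd a_{i+1}\dd z_1$ is a hole, since its only possible chords would be edges from $z_1$ into $P_i^*$, and $P_i^*\cap A=\emptyset$. The neighbors of $z_2$ in $D_i$ are precisely $z_1$ together with $B\cap P_i^*$, so applying even-wheel-freeness to $D_i$ and $z_2$ forces $1+|B\cap P_i^*|$ to be at most $3$ or odd, i.e.\ $|B\cap P_i^*|\in\{0,1,2\}\cup\{\text{even}\ge 4\}$. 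Since arcs of length $1$ have empty interior, and $A\cap B=\emptyset$ gives $\sum_{i}|B\cap P_i^*|=|B|$, and $|B|$ is odd, the only odd summand compatible with the constraint is $1$; hence some $P_i$ must contain exactly one vertex $b\in B$ in its interior.

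Given such a $P_i$ and $b$, I will build a theta in $G$ with ends $z_1$ and $b$. The three paths are: (1) the edge $z_1 a_i$ followed by the sub-arc of $P_i$ from $a_i$ to $b$; (2) the edge $z_1 a_{i+1}$ followed by the sub-arc of $P_i$ from $a_{i+1}$ to $b$; and (3) $z_1\dd z_2\dd b$. Each has length at least $2$, and $z_1$ and $b$ are non-adjacent because $b\in B$ and $A\cap B=\emptyset$. For the anticompleteness of interiors, three checks suffice: the interiors of (1) and (2) are $C$-vertices lying on opposite sides of $b$ within the chord-free hole $C$, so they are anticomplete in $G$; the interior of (3) is $\{z_2\}$, whose neighbors in $C$ all lie in $B$, while our choice of $P_i$ forces $B\cap P_i^*=\{b\}$, and $b$ is an endpoint (not an interior vertex) of (1) and (2). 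This produces a theta, contradicting the hypothesis that $G$ is theta-free.

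The step I expect to require the most care is the parity bookkeeping in the middle paragraph: two separate invocations of even-wheel-freeness, first on $C$ with $z_2$ and then on each hole $D_i$ with $z_2$, must be combined with the disjointness $A\cap B=\emptyset$ to single out an arc $P_i$ whose interior contains \emph{exactly one} vertex of $B$. Everything else reduces to direct verification of the three defining conditions of a theta.
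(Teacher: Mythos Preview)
The paper does not prove this statement; it is quoted from \cite{twxi} (Theorem~3.2 there) and used as a black box. So there is no in-paper proof to compare against.

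That said, your argument is correct. The two applications of even-wheel-freeness (first to $C$ with $z_2$ to get $|B|$ odd, then to each hole $D_i$ with $z_2$ to force $|B\cap P_i^*|\in\{0,1,2\}\cup\{4,6,\dots\}$) combine exactly as you describe: since the interiors $P_i^*$ partition $C\setminus A\supseteq B$, the odd total $|B|$ forces some summand to equal $1$, and the resulting single vertex $b$ yields a theta on $z_1,b$ via the three paths you list. The verifications you flag all go through; in particular, $|A|\ge 3$ guarantees $a_ia_{i+1}\notin E(G)$ (the complementary arc of $C$ contains another vertex of $A$), which is needed both for $D_i$ to be a hole and for the interiors of paths (1) and (2) to be anticomplete. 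One cosmetic remark: you never actually use that $|A|$ is odd, only that $|A|\ge 3$ and $|B|$ is odd; the argument is asymmetric in $z_1,z_2$, which is fine.
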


For a graph $G$ and an edge $z_1z_2\in E(G)$, we define the \textit{$z_1z_2$-contraption} of $G$ to be the minor of $G$ (without parallel edges) obtained by first contracting the edge $z_1z_2$ into a new vertex $z$, and then removing from the resulting graph the edges between $z$ and the symmetric difference of $N_G(z_1)$ and $N_G(z_2)$. The following was the most critical tool in the proofs of Theorems~\ref{thm:tw11k4} and \ref{thm:tw11conedforest}  \cite{twxi}:

\begin{theorem}[Alecu, Chudnovsky, Hajebi and Spirkl, see Theorem 3.1 in \cite{twxi}]\label{thm:<3}
    Let $G\in \mathcal{E}$ be a graph and let $z_1z_2\in E(G)$ such that $N_G(z_1)\cap N_G(z_2)$ is a stable set of vertices of degree at most three in $G$. Then the $z_1z_2$-contraption of $G$ belongs to $\mathcal{E}$.
\end{theorem}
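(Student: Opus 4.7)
The plan is to prove Theorem~\ref{thm:<3} by contradiction: suppose the $z_1z_2$-contraption $G'$ contains an induced subgraph $H$ isomorphic to one of $K_{2,2}$, a theta, a prism, or an even wheel, and then exhibit the same kind of forbidden subgraph inside $G$, contradicting $G \in \mathcal{E}$. The entire argument hinges on a single cheap observation. Writing $S := N_G(z_1) \cap N_G(z_2) = N_{G'}(z)$, each $v \in S$ has $\{z_1,z_2\} \subseteq N_G(v)$ and $\deg_G(v) \le 3$, so $v$ has at most one further neighbour in $V(G)\setminus\{z_1,z_2\}$. Equivalently, every $v \in N_{G'}(z)$ has degree at most $2$ in $G'$, one of its (at most two) $G'$-neighbours being $z$ itself. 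If $z \in V(H)$ this immediately forces every $H$-neighbour of $z$ to have degree at most $2$ in $H$, and hence to be \emph{not} a branch vertex of $H$ (a degree-$3$ branch vertex of the theta, a triangle vertex of the prism, or the centre of the even wheel). The case $z \notin V(H)$ is trivial because contraction does not disturb any adjacency among $V(G)\setminus\{z_1,z_2\}$, so $H$ already embeds in $G$. I therefore assume $z \in V(H)$ throughout.

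The analysis then splits on the four types of $H$. For $H = K_{2,2}$, write $V(H) = \{z,a,b,w\}$ with $z \not\sim w$; then $a,b \in S$ while $w \notin S$, so $w$ misses (say) $z_1$, and $\{z_1,w,a,b\}$ induces $K_{2,2}$ in $G$. For $H$ a theta or prism, the preliminary observation forces $z$ to be an interior vertex of one of the three connecting paths $P$ (since all degree-$3$ vertices of $H$ are off-limits for $N_H(z)$), with the two $H$-neighbours $a,b$ of $z$ also interior to $P$; the degree-$3$ bound then pins down $N_G(a)=\{z_1,z_2,a^+\}$ and $N_G(b)=\{z_1,z_2,b^+\}$, where $a^+,b^+$ are the vertices of $P$ beyond $a,b$. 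Substituting $z$ by $z_1$ (or $z_2$) in $H$ produces a candidate copy $H^\star$ in $G$: if $z_1$ has no further neighbour in $V(H)\setminus\{z\}$, then $H^\star \cong H$ sits in $G$ and we are done. Otherwise I combine one of the two holes $C$ of $H$ through $z$ with the edge $z_1z_2$, producing in $G$ a candidate hole of length $|C|$ (replace $z$ by one of $z_1,z_2$) or of length $|C|+1$ (replace $z$ by the edge $z_1z_2$), and apply Theorem~\ref{thm:evenwheeltheta} to rule out the existence of a second vertex with three neighbours on such a hole, thereby extracting a theta, prism, even wheel, or $K_{2,2}$ in $G$. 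For $H$ an even wheel with rim $C$ and centre $x$, the preliminary observation forces $z \in V(C)$, $z$ to be a non-spoke, and both cycle-neighbours of $z$ to be non-spokes too; in particular $x \not\sim z$ in $G'$, so $x \notin S$ and $x$ is adjacent to at most one of $z_1,z_2$. Replacing $z$ by whichever of $z_1,z_2$ is non-adjacent to $x$ preserves both the parity and the count of $x$'s neighbours on the rim, yielding an even wheel in $G$ whenever no chords are introduced by the replacement.

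The main obstacle is precisely the control of the \emph{extra} adjacencies of $z_1$ and $z_2$ to vertices of $V(H)\setminus\{z\}$ in the theta, prism, and even-wheel cases: each such edge can insert a chord into the candidate rim or short-circuit a path of the candidate theta or prism. The stability of $S$ restricts how these extra neighbours can sit relative to $\{a,b\}$, and Theorem~\ref{thm:evenwheeltheta} forbids certain dense configurations of neighbours on a shared hole of a (theta, even-wheel)-free graph. Orchestrating these two restrictions---and toggling between $z_1$, $z_2$, and the edge $z_1z_2$ whenever a candidate hole acquires a chord---to convert every extra-adjacency configuration into one of the four forbidden subgraphs inside $G$ is the core technical work of the proof.
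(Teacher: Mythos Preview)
This theorem is not proved in the present paper: it is quoted from \cite{twxi} (as ``Theorem 3.1 in \cite{twxi}'') and used here only as a black box inside the proof of Lemma~\ref{lem:gettingphantom}. There is therefore no proof in this paper to compare your attempt against.

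That said, your sketch is a plausible outline of the contradiction argument one expects, but it has at least one concrete gap. In the theta case you assert that ``the preliminary observation forces $z$ to be an interior vertex of one of the three connecting paths.'' This does not follow: your observation only says that the \emph{neighbours} of $z$ in $H$ have degree at most two in $H$, which is perfectly compatible with $z$ itself being a branch vertex of the theta (its three neighbours are then interior path vertices, each of degree two). The branch-vertex case needs a separate treatment, and it is not clear that your toggle-between-$z_1$,-$z_2$,-and-$z_1z_2$ strategy together with Theorem~\ref{thm:evenwheeltheta} handles it without further work. More generally, the last paragraph of your proposal is a description of what remains to be done rather than an argument; the control of the extra adjacencies of $z_1$ and $z_2$ into $V(H)\setminus\{z\}$ is exactly where the difficulty lies, and you have not shown how Theorem~\ref{thm:evenwheeltheta} and the stability of $S$ actually close every case.
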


We need the following two definitions from \cite{twxi}. Given a graph $G$ and an integer $w\geq 1$, a \textit{$w$-kaleidoscope} in $G$ is a 4-tuple $(a,x,y,\mathcal{W})$ where:
\begin{enumerate}[(K1), leftmargin=15mm, rightmargin=7mm]
\item\label{K1} we have $a,x,y\in V(G)$, and $x\dd a\dd y$ is a path in $G$ (so $x$ and $y$ are distinct and non-adjacent);
\item\label{K2} $\mathcal{W}$ is a set of $w$ pairwise internally disjoint paths in $G\setminus \{a\}$ from $x$ to $y$; and
\item\label{K3} for every $W\in \mathcal{W}$, $a$ is anticomplete to $W^*$ in $G$.
\end{enumerate}

Furthermore, given a subset $Z\subseteq V(G)$ and an integer $d\geq 1$, we say that $Z$ \textit{is $d$-mirrored by $(a,x,y,\mathcal{W})$} if:
\begin{enumerate}[(M1), leftmargin=15mm, rightmargin=7mm]
\item\label{M1} $Z$ is disjoint from $(\bigcup_{W\in \mathcal{W}}V(W))\cup \{a\}$;
\item\label{M2} the vertex $a$ has at most one neighbor in $Z$; and
\item\label{M3} for every $z\in Z$ and every $W\in \mathcal{W}$, $z$ is anticomplete to $N_{W}[x]\cup N_{W}[y]$, and $z$ has at least $d$ distinct neighbors in $W$. In particular, $z$ is anticomplete to $\{x,y\}$.
\end{enumerate}
 We also say a vertex $z\in V(G)$ is \textit{$d$-mirrored by $(a,x,y,\mathcal{W})$} if $\{z\}$ is $d$-mirrored by $(a,x,y,\mathcal{W})$; see Figure~\ref{fig:kalei}. 
\begin{figure}
    \centering
    \includegraphics[scale=0.7]{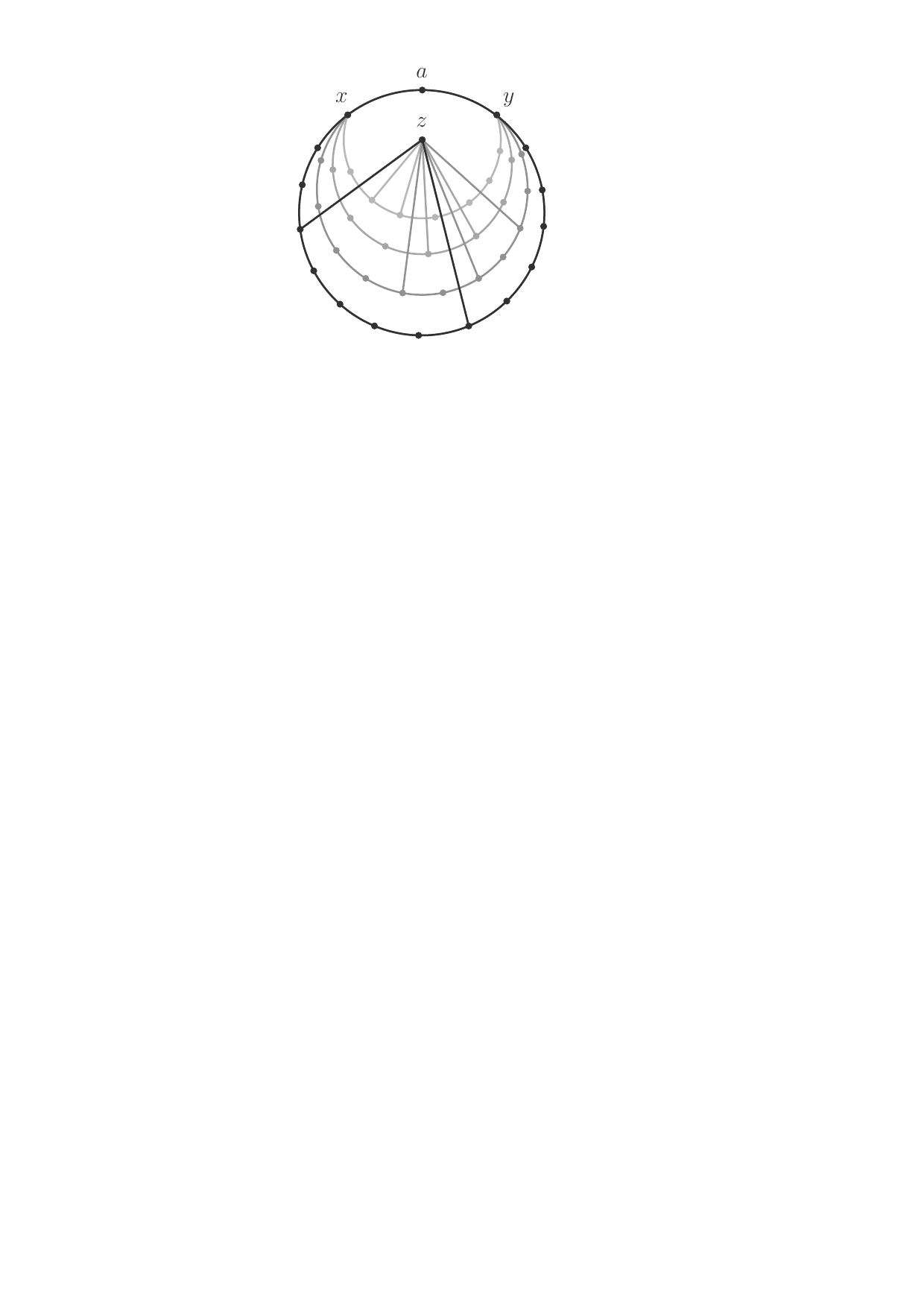}
    \caption{A $4$-kaleidoscope which $2$-mirrors the vertex $z$.}
    \label{fig:kalei}
\end{figure}

The following two results on kaleidoscopes are proved in \cite{twxi}:
\begin{theorem}[Alecu, Chudnovsky, Hajebi and Spirkl, see Theorem 5.1 in \cite{twxi}]\label{thm:<3}\label{thm:kaleidoscopeexists}
    For all integers $d,t,w\geq 1$, there exists $\zeta=\zeta(d,t,w)\geq 1$ with the following property. Let $G$ be a (theta, prism, $K_t$)-free graph, let $a,b\in V(G)$ be distinct and non-adjacent and let $\mathcal{P}$ be a collection of pairwise internally disjoint paths in $G$ from $a$ to $b$ with $|\mathcal{P}|\geq \zeta$.
    Then there exists a $w$-kaleidoscope $(a,x,y,\mathcal{W})$ in $G$ as well as a $2$-clique $Z_0$ in $G$ such that $Z_0$ is $d$-mirrored by $(a,x,y,\mathcal{W})$.
    \end{theorem}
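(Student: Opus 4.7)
My plan is to apply Theorem~\ref{banana} to $\mathcal{P}$ with $\nu$ chosen sufficiently large in terms of $d, t, w$. Setting $\zeta := \psi(t, \nu)$, this yields paths $P_1, \ldots, P_\nu \in \mathcal{P}$ whose $a$-sides $x_{P_1}, \ldots, x_{P_\nu}$, together with $b$, form a stable set, and such that for every $i < j$ the vertex $x_{P_i}$ has a neighbor on $P_j^* \setminus \{x_{P_j}\}$.

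The kaleidoscope skeleton is then extracted from the first $w+2$ of these paths. Put $x := x_{P_1}$ and $y := x_{P_2}$; stability makes $x\dd a\dd y$ a path. For each $j \in \{3, \ldots, w+2\}$, pick a neighbor $q_{1,j}$ of $x$ on $P_j^*$ and a neighbor $q_{2,j}$ of $y$ on $P_j^*$, both chosen greedily (say, nearest to $x_{P_j}$ along $P_j$), so that the concatenation $W_j$ running $x \dd q_{1,j}$, then along $P_j$ from $q_{1,j}$ to $q_{2,j}$, then $q_{2,j} \dd y$, is an induced path; small tweaks (swapping which of $q_{1,j}, q_{2,j}$ is closer to $x_{P_j}$) should handle boundary cases like $q_{1,j} = q_{2,j}$. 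The $W_j$'s are pairwise internally disjoint since their interiors lie inside pairwise disjoint $P_j^*$'s. Since the only neighbor of $a$ on $P_j$ is $x_{P_j}$, which sits outside $W_j^*$, the vertex $a$ is anticomplete to every $W_j^*$. Hence $(a, x, y, \{W_3, \ldots, W_{w+2}\})$ is a $w$-kaleidoscope.

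The main obstacle is locating the $d$-mirrored $2$-clique $Z_0$. The natural reservoir is the reserve paths $P_{w+3}, \ldots, P_\nu$, whose interiors are disjoint from the kaleidoscope. I would apply further Ramsey- and banana-style extractions along these reserve paths: the cross-edges guaranteed by Theorem~\ref{banana} together with theta- and prism-freeness severely constrain how the interior of a reserve path interacts with any $W_j$, since if some vertex on a reserve path has only few neighbors on some $W_j$ and no adjacent partner compensates, one can assemble a theta or prism across the two paths. Pushing this tension, I expect to iterate Theorem~\ref{banana} inside the reserve to pull out an adjacent pair $z_1, z_2$ that simultaneously sees every $W_j$ densely, with a final Ramsey pass (classical Ramsey, Theorem~\ref{classicalramsey}, exploiting $K_t$-freeness) to ensure $z_1$ and $z_2$ avoid the forbidden neighbors of $x$ and $y$ on each $W_j$. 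Making all these nested extractions quantitative, so that $\zeta(d, t, w)$ remains finite, is the technical heart of the proof.
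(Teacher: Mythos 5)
This theorem is imported into the paper from \cite{twxi} without proof, so there is no in-paper argument to compare against; what follows evaluates your proposal on its own terms.

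Your construction of the $w$-kaleidoscope from $P_1,\ldots,P_{w+2}$ is reasonable in outline, though making each $W_j$ genuinely induced requires choosing a \emph{consecutive} $x$-neighbour/$y$-neighbour pair along $P_j^*$ rather than the pair nearest to $x_{P_j}$; your ``small tweaks'' remark gestures at this, but as stated the greedy choices need not yield an induced path. That is a repairable omission. The genuine gap is the production of the $d$-mirrored $2$-clique $Z_0$. Theorem~\ref{banana} gives cross-edges only from $x_{P_i}$ into $P_j^*$ for $i<j$, so the reserve paths $P_{w+3},\ldots,P_\nu$ carry \emph{no} guaranteed adjacency to the kaleidoscope paths $P_3,\ldots,P_{w+2}$: nothing in the banana output makes any reserve vertex see even one vertex of any $W_j$, let alone $d$ of them. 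The $a$-sides $x_{P_k}$ moreover form a stable set, so an adjacent pair $z_1,z_2$ cannot come from them. What is needed is an edge $z_1z_2$ disjoint from $\{a\}\cup\bigcup_{W\in\mathcal{W}} V(W)$, with $a$ adjacent to at most one endpoint, and with \emph{both} endpoints having at least $d$ neighbours in \emph{every} $W_j$ while avoiding $N_{W_j}[x]\cup N_{W_j}[y]$. This is a far stronger and more uniform interaction than Theorem~\ref{banana} supplies, and the suggestion to ``iterate Theorem~\ref{banana} inside the reserve'' is a hope rather than an argument: each application returns stable sets and single cross-edges, and there is no visible mechanism to bootstrap these into a $2$-clique with controlled, dense attachments across all the kaleidoscope arms simultaneously. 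As written, the heart of the theorem is deferred, not proved.
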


\begin{theorem}[Alecu, Chudnovsky, Hajebi and Spirkl, see Theorem 4.1 in \cite{twxi}]\label{thm:difftodiff}
For all integers $d,t,w\geq 1$, there exists an integer $\kappa=\kappa(d,t,w)\geq 1$ with the following property. Let $G\in \mathcal{E}_t$ be a graph and let $z\in V(G)$ be $1$-mirrored by a $\kappa$-kaleidoscope $(a,x,y,\mathcal{W})$ in $G$. Then there exists $\mathcal{W}'\subseteq \mathcal{W}$ with $|\mathcal{W}'|=w$ such that $z$ is $d$-mirrored by the $w$-kaleidoscope $(a,x,y,\mathcal{W}')$.
\end{theorem}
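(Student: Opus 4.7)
The plan is to prove the theorem by iteratively upgrading the mirroring multiplicity one notch at a time: given an $(e-1)$-mirroring of $z$ by a sufficiently large kaleidoscope, I would pass to an $e$-mirroring of $z$ by a smaller sub-kaleidoscope, and iterate $d-1$ times starting from the given $1$-mirroring. For each single-notch step, let $(a,x,y,\mathcal{W})$ be an $(e-1)$-mirroring kaleidoscope and set $m_W = |N_G(z) \cap V(W)| \geq e-1$ for each $W \in \mathcal{W}$. Either at least $w$ paths satisfy $m_W \geq e$ (in which case we are done by restricting to them), or at most $w-1$ paths do, and the remaining subcollection $\mathcal{W}_0 \subseteq \mathcal{W}$, of size $|\mathcal{W}| - (w-1)$, satisfies $m_W = e-1$ identically. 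The single-notch step therefore reduces to deriving a contradiction from the existence of such a large $\mathcal{W}_0$.

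To derive the contradiction, pick two paths $W, W' \in \mathcal{W}_0$ and consider the cycle $W \cup W'$ together with $z$. Theta-freeness, applied to the three-path system formed by $W$, $W'$, and $x\dd a\dd y$ (valid because $a$ is anticomplete to $W^* \cup (W')^*$ by \ref{K3}), already forces at least one chord between $W^*$ and $(W')^*$. A further Ramsey/pigeon pass on $\mathcal{W}_0$, using $K_t$-freeness together with the bounded-size local configuration of $z$-neighbors and the endpoints $x, y$, yields a sub-refinement in which the chord adjacency pattern across pairs of paths is sufficiently uniform to guarantee the existence of a hole $C \subseteq W \cup W'$ containing at least $e-1$ $z$-neighbors from each of $W$ and $W'$. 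Now there are two cases. If $e \geq 3$, then $z$ has $2(e-1) \geq 4$ neighbors, an even number, in the hole $C$; that makes $z$ a hub of an even wheel on $C$, contradicting $G \in \mathcal{E}_t$. If $e = 2$, then $z$ has one neighbor $v \in W \cap C$ and one $v' \in W' \cap C$, and by \ref{M3} neither is a neighbor of $x$ or $y$, so the two arcs of $C$ from $v$ to $v'$ each have length at least two; together with the length-two path $v\dd z\dd v'$ this produces three internally disjoint, interior-anticomplete paths of length $\geq 2$ between the non-adjacent vertices $v, v'$, i.e., a theta, again contradicting $G \in \mathcal{E}_t$.

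The main obstacle will be the intermediate Ramsey/chord-control step: because the presence of $a$ forces at least one chord between every pair of paths in $\mathcal{W}_0$, simply deleting chords is not an option, and one must instead normalize the chord pattern — pinning down, for instance, that chords between $W^*$ and $(W')^*$ do not cross the $z$-neighbors on either path — so that a clean hole $C \subseteq W \cup W'$ with the required $z$-neighbor distribution can be extracted. Carrying out this delicate case analysis, possibly also invoking prism-freeness to preclude mixed chord configurations and Theorem~\ref{thm:evenwheeltheta} to coerce chord-induced sub-holes into the desired form, is where the real work lies; once the hole $C$ is in hand, the even-wheel/theta contradictions in the two cases close the single-notch step, and iterating yields the theorem with $\kappa(d,t,w)$ defined as the appropriate $(d-1)$-fold composition of the single-notch blow-up.
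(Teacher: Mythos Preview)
This theorem is not proved in the present paper: it is quoted from \cite{twxi} (as Theorem~4.1 there) and used here as a black box. So there is no ``paper's own proof'' to compare against; I can only assess the viability of your outline.

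Your high-level strategy --- upgrade the mirroring multiplicity one notch at a time, and at each step argue that if too many paths carry exactly $e-1$ neighbours of $z$ then some forbidden configuration (theta or even wheel) appears --- is a reasonable shape for such an argument. The problem is that the heart of the proof is precisely the step you label ``the main obstacle'' and do not carry out. You assert that a Ramsey pass on $\mathcal{W}_0$ will produce two paths $W,W'$ and a hole $C\subseteq W\cup W'$ containing \emph{all} $e-1$ $z$-neighbours from each path, but nothing in the setup prevents chords between $W^*$ and $(W')^*$ from short-circuiting the cycle $W\cup W'$ so that the resulting hole misses some (or most) of the $z$-neighbours. ``Normalizing the chord pattern'' so that no chord crosses a $z$-neighbour is a genuine structural claim that you have not argued, and it is not clear that prism-freeness or Theorem~\ref{thm:evenwheeltheta} alone delivers it. Without this, neither the even-wheel count in the $e\geq 3$ case nor the theta in the $e=2$ case is available.

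There is also a loose end in the $e=2$ branch: you need the two $z$-neighbours $v\in W$ and $v'\in W'$ to be non-adjacent for the theta to form, but since you have already conceded that chords between $W^*$ and $(W')^*$ exist, $vv'$ could be one of them. As written, the proposal is a plan whose decisive technical step is still missing.
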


We also use the following, which is an immediate consequence of Theorem~\ref{classicalramsey}:

\begin{theorem}\label{thm:ramseytournament}
For all integers $c,s\geq 1$, every digraph on at least $c^{c^s}$ vertices contains either an acyclic tournament on $c$ vertices or a stable set of cardinality $s$. 
\end{theorem}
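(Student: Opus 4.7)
The plan is to combine Theorem~\ref{classicalramsey} (the classical undirected Ramsey theorem) with the well-known Erd\H{o}s--Moser bound for transitive subtournaments in tournaments. Let $D$ be a digraph on $n = c^{c^s}$ vertices, and let $U$ be the underlying simple graph, in which $uv \in E(U)$ iff at least one of the arcs $uv, vu$ lies in $E(D)$. A stable set in $U$ is precisely a stable set in $D$, so it suffices to produce either a stable set of size $s$ in $U$ or a transitive tournament on $c$ vertices in $D$.

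First I would apply Theorem~\ref{classicalramsey} to $U$ with clique-threshold $2^{c-1}$ and stable-set threshold $s$. The required hypothesis $n \geq (2^{c-1})^s = 2^{s(c-1)}$ is easy to check: for $c = 1$ both sides equal $1$, and for $c \geq 2$ the binomial expansion gives $c^s = ((c-1)+1)^s \geq s(c-1)$, so $c^{c^s} \geq c^{s(c-1)} \geq 2^{s(c-1)}$. If Ramsey yields a stable set of size $s$ in $U$, we are done; otherwise it yields a clique $Q \subseteq V(U)$ of size $2^{c-1}$.

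Every pair of vertices in $Q$ is joined by at least one arc of $D$. For each such pair I would select one arc arbitrarily (if both directions are present, pick either), producing a tournament $T$ on vertex set $V(Q)$ whose arc set is contained in $E(D)$. Invoking Erd\H{o}s--Moser, $T$ contains a transitive subtournament on $\lfloor \log_2 |V(Q)| \rfloor + 1 = c$ vertices; since all arcs of this subtournament lie in $E(D)$, we have obtained the desired transitive tournament in $D$.

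The only ingredient beyond Theorem~\ref{classicalramsey} is the Erd\H{o}s--Moser bound, which admits a short inductive proof that I would either cite or include: given a tournament on $2^{c-1}$ vertices, pick any vertex $v$; one of its out-neighbourhood or in-neighbourhood has size at least $2^{c-2}$, and recursion there yields a transitive subtournament of size $c-1$ which, together with $v$, gives one of size $c$. I do not foresee any genuine obstacle; the stated quantity $c^{c^s}$ is simply a convenient, easy-to-state majorant of the actual bound $2^{s(c-1)}$ that the argument produces, mirroring the relationship between Theorem~\ref{classicalramsey} and sharper Ramsey numbers.
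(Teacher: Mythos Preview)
Your argument is correct. The paper does not actually supply a proof of Theorem~\ref{thm:ramseytournament}; it merely records the statement as ``an immediate consequence of Theorem~\ref{classicalramsey}.'' Your route---apply Theorem~\ref{classicalramsey} to the underlying graph to obtain either a stable set of size $s$ or a clique of size $2^{c-1}$, and then invoke (or reprove) the Erd\H{o}s--Moser bound inside the resulting tournament---is a standard and valid way to fill in the details, and your verification that $c^{c^s}\geq 2^{s(c-1)}$ is fine. If one wishes to stay literally within ``consequences of Theorem~\ref{classicalramsey},'' note that the Erd\H{o}s--Moser step can itself be replaced by a second application of Theorem~\ref{classicalramsey}: fix a linear order on the $q$-vertex tournament, $2$-colour each pair by the direction of its arc, and a monochromatic $c$-clique is transitive; this needs $q\geq c^c$, so altogether $(c^c)^s=c^{cs}\leq c^{c^s}$ vertices suffice, which is presumably what the authors had in mind and explains the shape of the stated bound.
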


 The next lemma overcomes the bulk of the difficulty in the proof of Theorem~\ref{thm:twtokalei}:

\begin{lemma}\label{lem:gettingphantom}
         For all integers $d,h,t,w\geq 1$, there exists an integer $\xi=\xi(d,h,t,w)\geq 1$ with the following property. Let $G\in \mathcal{E}_t$ be a graph and let $Z_0\subseteq V(G)$ with $|Z_0|\leq h$. Assume that $Z_0$ is $3$-mirrored by a $\xi$-kaleidoscope $(a,x,y,\mathcal{W})$ in $G$. Then there exists a $(Z_0,d,1)$-phantom $(Z_0, Z_1,\Gamma_1)$ in $G$ with $|Z_1|\leq 2dh^2$ and a $w$-subset $\mathcal{W}'$ of $\mathcal{W}$, such that $Z_1$ is $3$-mirrored by the $w$-kaleidoscope $(a,x,y,\mathcal{W}')$.
         \end{lemma}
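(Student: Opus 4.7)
The plan is to apply Theorem~\ref{thm:evenwheeltheta} to extract, for each edge $e$ of $G[Z_0]$ and each path $W\in\mathcal{W}$, a common neighbour $v^e_W$ of the endpoints of $e$ in the interior of $W$; then to pass, via $K_{2,2}$- and $K_t$-freeness together with Ramsey, to a sub-kaleidoscope on which all the $v^e_W$ are well behaved; and finally to iterate Theorem~\ref{thm:difftodiff} to upgrade $1$-mirroring into $3$-mirroring. Fix $e=uv\in E(G[Z_0])$ and $W\in\mathcal{W}$. By \ref{K1}--\ref{K3} the cycle $W\cup\{a\}$ is a hole in which $u$ and $v$ each have at least three neighbours (three in $W$ by \ref{M3}, plus possibly $a$), and $u\sim v$; so Theorem~\ref{thm:evenwheeltheta} produces a common neighbour of $u,v$ in $W\cup\{a\}$. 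Since $u,v$ are anticomplete to $\{x,y\}$ by \ref{M3} and at most one of them is adjacent to $a$ by \ref{M2}, this common neighbour lies in $V(W)\setminus\{x,y\}=W^*$; call it $v^e_W$.

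\textbf{Refining by $K_{2,2}$-freeness.} Let $x^W$ (resp.\ $y^W$) be the unique neighbour of $x$ (resp.\ $y$) on the induced path $W$. I first refine $\mathcal{W}$ so that each $v^e_W$ is strictly interior to $W$ and anticomplete to $N_{W'}[x]\cup N_{W'}[y]$ for every other $W'$ in the sub-kaleidoscope. If $v^e_W=x^W$ for two distinct $W$'s, these are two common neighbours of $u$ and $x$; since $u\not\sim x$, $K_{2,2}$-freeness forces them to be adjacent, and $K_t$-freeness bounds their number by $t-1$. An analogous bound of $t-1$ holds for the $W$'s with $v^e_W=y^W$; once the resulting bad paths are discarded, so that each $v^e_W$ is anticomplete to $\{x,y\}$, the same $K_{2,2}/K_t$-argument applied to the pair $(v^e_W,x)$ (resp.\ $(v^e_W,y)$) with the vertices $x^{W'}$ (resp.\ $y^{W'}$) as candidate common neighbours bounds by $t-1$ the number of $W'\ne W$ with $v^e_W\sim x^{W'}$ (resp.\ $\sim y^{W'}$). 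Iterating across the $\le\binom{h}{2}$ edges and applying a greedy independent-set argument to the resulting bad graph on $\mathcal{W}$, whose degree is bounded by a polynomial in $t$ and $h$, I pass to $\mathcal{U}\subseteq\mathcal{W}$ on which the anticompleteness part of \ref{M3} holds automatically for every $v^e_W$ with $W\in\mathcal{U}$ and every other $W'\in\mathcal{U}$.

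\textbf{Forcing $1$-mirroring.} To apply Theorem~\ref{thm:difftodiff} I still need each $v^e_W$ to have at least one neighbour in every other path of the kaleidoscope. For each edge $e$, a Ramsey refinement on the two-colouring of ordered pairs $(W,W')\in\mathcal{U}^2$ by "at least one" vs.\ "zero" neighbours of $v^e_W$ in $W'$ yields a large $e$-monochromatic subset; intersecting across edges gives $\mathcal{U}'\subseteq\mathcal{U}$ on which the colour is constant per edge. The "at least one" case is exactly the $1$-mirroring I need. Ruling out the "zero" case is the chief obstacle: the plan is to exhibit a forbidden theta, prism, or even wheel inside $W\cup W'$ augmented by the endpoints of $e$ and enough other paths of $\mathcal{U}'$, exploiting that $u,v$ have at least three neighbours in every path by \ref{M3} for $Z_0$. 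Even a bound polynomial in $t,h$ on the number of "zero" $W'$'s per pair $(W,e)$ would suffice, via the same greedy discard as in the previous step.

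\textbf{Assembling.} For each edge $e$ pick pairwise-disjoint $d$-subsets $\mathcal{U}_e\subseteq\mathcal{U}'$, set $\Gamma_1(e)=\{v^e_W:W\in\mathcal{U}_e\}$ and $Z_1=Z_0\cup\bigcup_e\Gamma_1(e)$; then $|Z_1|\le h+d\binom{h}{2}\le 2dh^2$. Starting from $\mathcal{U}'\setminus\bigcup_e\mathcal{U}_e$, I iteratively apply Theorem~\ref{thm:difftodiff} to each of the $\le dh^2/2$ vertices of $Z_1\setminus Z_0$ (each of which is $1$-mirrored by the current kaleidoscope after the previous step) to extract a $w$-subset $\mathcal{W}'\subseteq\mathcal{W}$ on which every such vertex is $3$-mirrored. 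The $3$-mirroring of $Z_0$ by $\mathcal{W}$ restricts to $\mathcal{W}'$, and \ref{M2} for $Z_1$ follows from \ref{M2} for $Z_0$ together with \ref{K3}, which makes $a$ anticomplete to every $v^e_W\in W^*$. Choosing $\xi$ large enough to absorb the Ramsey losses in the two refinement steps and the $\le dh^2/2$ successive applications of Theorem~\ref{thm:difftodiff} completes the proof.
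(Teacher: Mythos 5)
There is a genuine gap. You correctly set up the problem (apply Theorem~\ref{thm:evenwheeltheta} to produce the candidate vertex $v^e_W$ in $W^*$ for each edge $e$ and path $W$, refine via $K_{2,2}$- and $K_t$-freeness so that the candidates avoid the neighbourhoods of $x$ and $y$ in the other paths, and then aim for $1$-mirroring so that Theorem~\ref{thm:difftodiff} can be invoked). But the entire difficulty of the lemma sits in what you flag as ``the chief obstacle'': showing that for enough pairs $(W,W')$ the candidate $v^e_W$ has at least one neighbour in $W'^*$. You only say that ``the plan is to exhibit a forbidden theta, prism, or even wheel'' and that a polynomial bound on the number of ``zero'' pairs ``would suffice,'' but neither the forbidden configuration nor the bound is produced. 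That is precisely the step the paper spends most of the proof on, and it is not a routine Ramsey cleanup.

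The paper's actual mechanism is different and more specific. After the refinement (obtained via Theorem~\ref{ramsey2} applied to the triples $\{x_W,z_W,y_W\}$), it builds a digraph $D$ on the surviving paths, with an arc $(W_1,W_2)$ when $z_{W_1}$ has a neighbour in $W_2^*$, and applies Theorem~\ref{thm:ramseytournament} to find either a transitive tournament or a large stable set in $D$. The stable-set branch is the crux: the paths $z_W\dd W\dd x$ with $W$ in the stable set, together with the edge $z_1z_2$, are passed through the $z_1z_2$-\emph{contraption} (Theorem~\ref{thm:<3}), which lands in $\mathcal{E}_t$ with the contracted vertex $z$ having neighbourhood exactly $\{z_W:W\in\mathcal{S}\}$, a stable set of degree-$2$ vertices; Theorem~\ref{banana} then forces one of the $z_W$'s to have a third neighbour, a contradiction. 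Your outline never invokes the contraption at all, and it is not clear how a direct search for a theta/prism/even wheel inside $W\cup W'$ plus the endpoints of $e$ would close the gap, since the forbidden structure the paper needs only emerges after contraction and across \emph{many} paths simultaneously via Theorem~\ref{banana}. Without this step your argument does not establish the $1$-mirroring you need before Theorem~\ref{thm:difftodiff} can take over, so the proof as written is incomplete.

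A secondary remark: the paper also organizes the argument as an induction over the edges of $G[Z_0]$ one at a time, shrinking the kaleidoscope at each step, whereas you process all edges at once and then iterate Theorem~\ref{thm:difftodiff} across all new vertices. The latter is workable once the main gap is filled, but the bookkeeping of shrinking kaleidoscopes and the choice of $\xi$ would need to mirror the paper's nested recursion; at present this is only asserted.
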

         \begin{proof}
For fixed $d,t,w\geq 1$, we define the sequence $\{\xi_j:j=0,\ldots, h^2\}$ of positive integers recursively, as follows. Let $n(\cdot,\cdot,\cdot)$, $\psi(\cdot,\cdot)$ and $\kappa(\cdot,\cdot,\cdot)$ be as in Theorems~\ref{ramsey2}, \ref{banana} and \ref{thm:difftodiff}, respectively. Let $\xi_0=\xi_0(d,t,w)=w$. For each $j\in [h^2]$, assuming $\xi_{j-1}$ is defined, let 
$$f_j=f_j(d,t,w)=\kappa(3,t,\xi_{j-1})^{\xi_{j-1}}d+\kappa(3,t,\xi_{j-1});$$
$$g_j=g_j(d,t,w)=f_j^{f_j^{\psi(t,2)}};$$
$$\xi_{j}=\xi_j(d,t,w)=n\left(g_j,3,2,t\right).$$
In particular, observe that the sequence $\{\xi_j:j=0,\ldots, h^2\}$  is increasing.

Define
$$\xi=\xi(d,h,t,w)=\xi_{h^2}(d,t,w)=\xi_{h^2}\geq 1.$$
We prove that the above value of $\xi$ satisfies Lemma~\ref{lem:gettingphantom}. Indeed, we prove a stronger statement tailored to an inductive proof. Let $|E(G[Z_0])|=m$; then we have $0\leq m\leq h^2$. Pick an enumeration $e_1,\ldots, e_m$ of the edges of $G[Z_0]$. Our goal is to show that:

\sta{\label{st:strongerinductphantom1} For each $j\in \{0,1,\ldots, m\}$, there exist $j$ pairwise disjoint $d$-subsets $\{\Gamma_k:k\in [j]\}$ of $V(G)\setminus Z_0$ as well as a $\xi_{m-j}$-subset $\mathcal{W}_j$ of $\mathcal{W}$, such that the following hold.
\begin{itemize}
    \item  For every $k\in [j]$, the ends of $e_k$ are complete to $\Gamma_k$.
    \item The set $Z_0\cup (\cup_{k\in [j]}\Gamma_k)$ is $3$-mirrored by  the $\xi_{m-j}$-kaleidoscope $(a,x,y,\mathcal{W}_j)$.
\end{itemize}}

Let us first prove that \eqref{st:strongerinductphantom1} implies Lemma~\ref{lem:gettingphantom}. To see this, let $\{\Gamma_k:k\in [m]\}$ and $\mathcal{W}'=\mathcal{W}_m$
be as in \eqref{st:strongerinductphantom1} for $j=m$; thus, we have $|Z_1|=|Z_0|+dm\leq 2dh^2$ and $|\mathcal{W}'|=\xi_0=w$. Let $Z_1=Z_0\cup (\cup_{k\in [m]}\Gamma_k)$. Define the map $\Gamma_1$ with domain $E(G[Z_0])$ such that for each $k\in [m]$, we have $\Gamma_1(e_k)=\Gamma_k\subseteq Z_1\setminus Z_0$. Then from \eqref{st:strongerinductphantom1}, and specifically the first bullet of \eqref{st:strongerinductphantom1}, it follows that $(Z_0,Z_1,\Gamma_1)$ is a $(Z_0,d,1)$-phantom in $G$ with $|Z_1|=|Z_0|+dm\leq 2dh^2$, and from the second bullet of \eqref{st:strongerinductphantom1}, it follows that $Z_1$ is $3$-mirrored by the $w$-kaleidoscope $(a,x,y,\mathcal{W}')$, as desired.

We now turn to the proof of \eqref{st:strongerinductphantom1}, which is by induction on $j$. For $j=0$, the result follows from the fact that $\xi_{h^2}\geq \xi_m$ as well as the assumption that $Z_0$ is $3$-mirrored by the $\xi_{h^2}$-kaleidoscope $(a,x,y,\mathcal{W})$. So we may assume that $j\geq 1$. By the induction hypothesis, there exist $j-1$ pairwise disjoint $d$-subsets $\{\Gamma_k:k\in [j-1]\}$ of $V(G)\setminus Z_0$ as well as a $\xi_{m-j+1}$-subset $\mathcal{W}_{j-1}$ of $\mathcal{W}$, such that:
\begin{itemize}
    \item  for every $k\in [j-1]$, the ends of $e_k$ are complete to $\Gamma_k$; and
    \item the set $Z_0\cup (\cup_{k\in [j-1]}\Gamma_k)$ is $3$-mirrored by  the $\xi_{m-j+1}$-kaleidoscope $(a,x,y,\mathcal{W}_{j-1})$.
\end{itemize}
Let $z_1,z_2\in Z_0$ be the ends of $e_j$ and let $Z^+=Z_0\cup (\cup_{k\in [j-1]}\Gamma_k)$. Let $W\in \mathcal{W}_{j-1}$ be fixed. Let $x_W,y_W$ be the neighbors of $x$ and $y$ in $W$, respectively. Consider the hole $C_W=a\dd x\dd W\dd y\dd a$ in $G$. Since $Z^+$ is $3$-mirrored by $(a,x,y,\mathcal{W}_{j-1})$, it follows from \ref{M3} that $z_1$ and $z_2$ each have at least three neighbors in $W\subseteq C_W$, and from \ref{M2} that $a$ is adjacent to at most one of $z_1$ and $z_2$. Thus, by Theorem~\ref{thm:evenwheeltheta}, the vertices $z_1$ and $z_2$ have a common neighbor in $W$. Traversing $W$ from $x$ to $y$, let $z_W$ be the first common neighbor of $z_1$ and $z_2$ in $W$. By \ref{M3}, $\{z_1,z_2\}$ is anticomplete to $\{x,x_W,y,y_W\}$, and in particular we have $z_W\in W\setminus \{x,x_W,y,y_W\}$. (see Figure~\ref{fig:lhbij}).
\begin{figure}[t!]
    \centering
\includegraphics[scale=0.6]{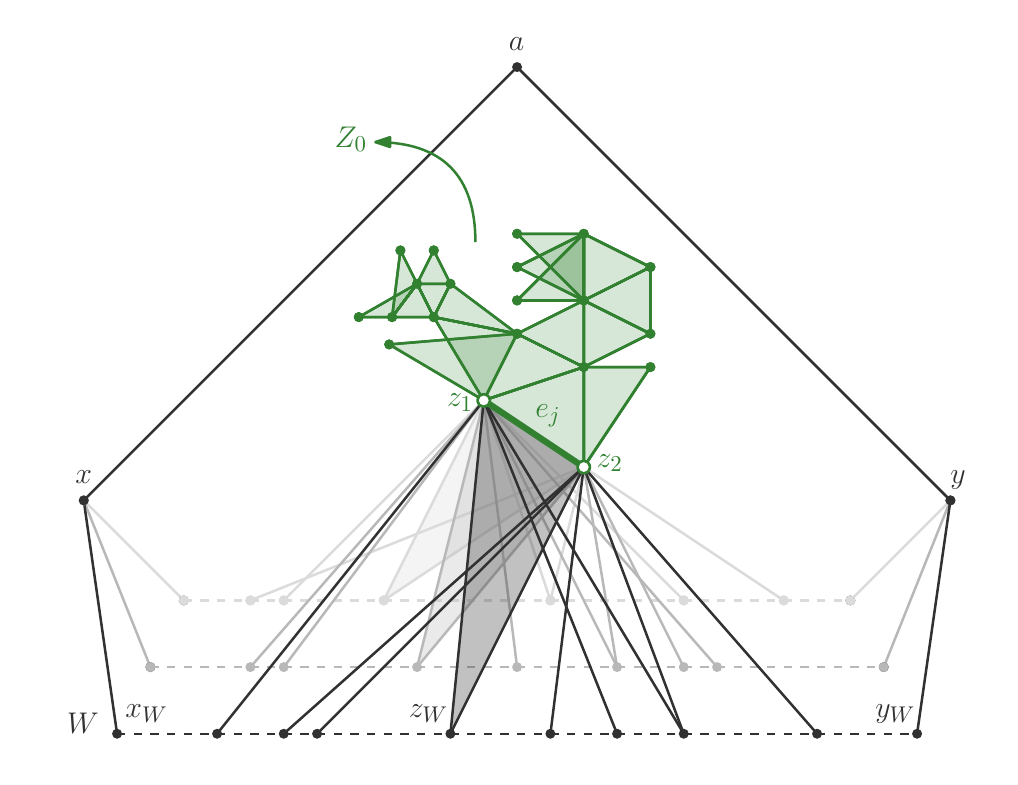}
    \caption{Proof of Theorem~\ref{lem:gettingphantom} (dashed lines represents paths of undetermined length).}
    \label{fig:lhbij}
\end{figure}
\medskip

Since $G$ is $(K_{2,2},K_t)$-free and since $|\mathcal{W}_{j-1}|=\xi_{m-j+1}=n(g_{m-j+1},2,3,t)$, it follows from Lemma~\ref{ramsey2} applied to the sets $\{\{x_W,z_W,y_W\}: W\in \mathcal{W}_{j-1}\}$ that there exists $\mathcal{W}'_{j-1}\subseteq \mathcal{W}_{j-1}$ with $|\mathcal{W}'_{j-1}|=g_{m-j+1}$ such that the sets $\{x_W,z_W,y_W\}$ for $W\in \mathcal{W}'_{j-1}$ are pairwise anticomplete in $G$.

Next, we define a digraph $D$ with vertex set $\mathcal{W}'_{j-1}$ such that for distinct paths $W_1,W_2\in \mathcal{W}'_{j-1}$, the arc $(W_1,W_2)$ is present in $D$ if and only if $z_{W_1}$ has a neighbor in $W_2^*$. We claim that:

\sta{\label{st:transitive} $D$ contains an acyclic tournament on $f_{m-j+1}$ vertices.}

Suppose not. Recall that $V(D)=\mathcal{W}'_{j-1}$ and $|\mathcal{W}'_{j-1}|=g_{m-j+1}$. So we have $|V(D)|=|\mathcal{W}'_{j-1}|=g_{m-j+1}$.
Since $D$ is assumed not to contain an acyclic tournament on $f_{m-j+1}$ vertices, from Theorem~\ref{thm:ramseytournament} and the definition of $g_{m-j+1}$, it follows that there is a stable set in $D$ of cardinality $\psi(t,2)$. As a result, there exists $\mathcal{S}\subseteq \mathcal{W}'_{j-1}$ with $|\mathcal{S}|=\psi(t,2)$ such that for all distinct $W_1,W_2\in \mathcal{S}$, the vertex $z_{W_1}$ is anticomplete to $W_2^*$. Let $J=G[(\bigcup_{W\in \mathcal{S}}V(z_W\dd W\dd x))\cup \{z_1,z_2\}]$ and let $J'$ be the $z_1z_2$-contraption of $J$. Note that we have $J\in \mathcal{E}_t$, and the vertices $z_1,z_2\in V(J)$ are distinct and adjacent with $N_{J}(z_1)\cap N_{J}(z_2)=\{z_W:W\in \mathcal{S}\}$ being a stable set of vertices of degree three in $J$. Consequently, by Theorem~\ref{thm:<3},  we have $J'\in \mathcal{E}$. In fact, $J'$ is $K_t$-free (because $J$ is), and so we have $J'\in \mathcal{E}_t$. Let $z\in V(J')$ be as in the definition of the $z_1z_2$-contraption. Then $N_{J'}(z)=N_{J}(z_1)\cap N_{J}(z_2)=\{z_W:W\in \mathcal{S}\}$ is a stable set of vertices of degree two in $J$. Moreover, $\mathcal{P}=\{z\dd z_W\dd W\dd x:W\in \mathcal{S}\}$ is a collection of $\psi(t,2)$ pairwise internally disjoint paths in $J'\in \mathcal{E}_t$ between the non-adjacent vertices $z$ and $x$.
Applying Theorem~\ref{banana} to $\mathcal{P}$, we deduce that there exist $W_1,W_2\in \mathcal{S}$ for which $z_{W_1}$ has a neighbor in the interior of $z_{W_2}\dd W_2\dd x$ in $J'$. But then $z_{W_1}\in N_{J'}(z)$ has degree at least three in $J'$, a contradiction. This proves \eqref{st:transitive}.

\medskip

By \eqref{st:transitive} and the definition of $f_{m-j+1}$, there are two disjoint subsets $\mathcal{U}$ and $\mathcal{V}$ of $V(D)=\mathcal{W}'_{j-1}$ with $|\mathcal{U}|=d(\kappa(3,t,\xi_{j-1}))^{\xi_{j-1}}$ and $|\mathcal{V}|=\kappa(3,t,\xi_{m-j})$, such that for all $W\in \mathcal{U}$ and $W'\in \mathcal{V}$, the arc $(W,W')$ belongs to $D$. It follows that for every $W\in \mathcal{U}$, the vertex $z_{W}$ is $1$-mirrored by the $\kappa(3,t,\xi_{m-j})$-kaleidoscope $(a,x,y,\mathcal{V}$. Therefore, by Theorem~\ref{thm:difftodiff}, for each $W\in \mathcal{U}$, there exists a $\xi_{m-j}$-subset $\mathcal{V}_W$ of $\mathcal{V}$ such that the vertex $z_{W}$ is $3$-mirrored by the $\xi_{m-j}$-kaleidoscope $(a,x,y,\mathcal{V}_W)$. From this, combined with the fact that $|\mathcal{U}|=d(\kappa(3,t,\xi_{j-1}))^{\xi_{j-1}}$, we deduce the following: 

\sta{\label{st:samenbrs} There exists a $d$-subset $\mathcal{D}$ of $\mathcal{U}$ and a $\xi_{m-j}$-subset of $\mathcal{W}_{j}$ of $\mathcal{V}$ such that for every $W\in \mathcal{D}$, we have $\mathcal{V}_{W}=\mathcal{W}_{j}$.}

Let $\Gamma_j=\{z_{W}:W\in \mathcal{D}\}$. Then $\Gamma_j$ is a $d$-subset of $V(G)\setminus Z^+$ which is complete to $\{z_1,z_2\}$. In particular, $\{\Gamma_k:k\in [j]\}$ are $j$ pairwise disjoint $d$-subsets of $V(G)\setminus Z_0$, and for each $k\in [j]$, the ends of $e_k$ are complete to $\Gamma_k$. By \eqref{st:samenbrs}, $\Gamma_j$ is $3$-mirrored by the $\xi_{m-j}$-kaleidoscope $(a,x,y,\mathcal{W}_j)$. Moreover, $Z^+$ is $3$-mirrored by the $\xi_{m-j}$-kaleidoscope $(a,x,y,\mathcal{W}_j)$, because $\mathcal{W}_{j}\subseteq \mathcal{V}\subseteq \mathcal{W}'_{j-1}\subseteq \mathcal{W}_{j-1}$. In conclusion, we have shown that $Z^+\cup \Gamma_j=Z_0\cup (\cup_{k\in [j]}\Gamma_k)$ is $3$-mirrored by the $\xi_{m-j}$-kaleidoscope $(a,x,y,\mathcal{W}_j)$. Therefore, the sets $\{\Gamma_k:k\in [j]\}$ satisfy the two bullet conditions of \eqref{st:strongerinductphantom1}. This completes the inductive proof of \eqref{st:strongerinductphantom1}, hence finishing the proof of Lemma~\ref{lem:gettingphantom}
\end{proof}

From Lemma~\ref{lem:gettingphantom}, we deduce that:

\begin{theorem}\label{thm:gettingphantom}
         For all integers $d,h,t\geq 1$ and $r\geq 0$, there exist $(r+1)$ positive integers $\{\Xi_i=\Xi_i(d,h,t):i=0,\ldots, r\}$ with $\Xi_0=1$ for which the following holds. Let $G\in \mathcal{E}_t$ be a graph and let $Z_0\subseteq V(G)$ with $|Z_0|\leq h$. Assume that $Z_0$ is $3$-mirrored by a $\Xi_r$-kaleidoscope $(a,x,y,\mathcal{W}_0)$ in $G$. Then,  for each $i\in \{0,\ldots, r\}$, there exists a $(Z_0,d,i)$-phantom $(Z_0,\ldots, Z_i,\Gamma_j:j\in [i])$ in $G$ with $|Z_i|\leq (2d)^{2^i-1}h^{2^i}$ and a $\Xi_{r-i}$-subset $\mathcal{W}_i$ of $\mathcal{W}$, such that $Z_i$ is $3$-mirrored by the $\Xi_{r-i}$-kaleidoscope $(a,x,y,\mathcal{W}_i)$. In particular, there exists a $(Z_0,d,r)$-phantom $(Z_0,\ldots, Z_r,\Gamma_j:j\in [r])$ in $G$.
         \end{theorem}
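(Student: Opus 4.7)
The plan is to prove Theorem~\ref{thm:gettingphantom} by iterating Lemma~\ref{lem:gettingphantom} exactly $r$ times, with the output of each application serving as the input to the next. Define the sequence $\{\Xi_i\}_{i=0}^r$ recursively: set $\Xi_0=1$, and for each $k\in [r]$ let
\[
\Xi_k \;=\; \xi\bigl(d,\,h_{r-k},\,t,\,\Xi_{k-1}\bigr),
\]
where $h_j:=(2d)^{2^j-1}h^{2^j}$ and $\xi(\cdot,\cdot,\cdot,\cdot)$ is the function from Lemma~\ref{lem:gettingphantom}. These $\Xi_k$ depend only on $d, h, t, r$; moreover, because $\xi(d,h',t,w)\geq w$ for all inputs (as is visible from the construction of $\xi$ in Lemma~\ref{lem:gettingphantom}), the sequence $\{\Xi_k\}$ is non-decreasing.

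I will prove by induction on $i\in\{0,1,\ldots,r\}$ the following invariant: \emph{there exist a $(Z_0,d,i)$-phantom $(Z_0,\ldots,Z_i;\Gamma_j:j\in [i])$ in $G$ with $|Z_i|\leq h_i$, and a $\Xi_{r-i}$-subset $\mathcal{W}_i$ of the given set $\mathcal{W}_0$, such that $Z_i$ is $3$-mirrored by the $\Xi_{r-i}$-kaleidoscope $(a,x,y,\mathcal{W}_i)$.} The base case $i=0$ is immediate from the hypothesis of the theorem: the empty tuple is vacuously a $(Z_0,d,0)$-phantom, $|Z_0|\leq h=h_0$, and the required kaleidoscope is the one supplied. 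For the inductive step, assume the invariant holds at step $i-1$. Apply Lemma~\ref{lem:gettingphantom} with graph $G$, set $Z_{i-1}$ (of size at most $h_{i-1}$), the $\Xi_{r-i+1}$-kaleidoscope $(a,x,y,\mathcal{W}_{i-1})$, and parameter $w:=\Xi_{r-i}$. By the recursive definition, $\Xi_{r-i+1}=\xi(d,h_{i-1},t,\Xi_{r-i})$, so the hypothesis of Lemma~\ref{lem:gettingphantom} is met. The lemma outputs a $(Z_{i-1},d,1)$-phantom $(Z_{i-1},Z_i,\Gamma'_1)$ with
\[
|Z_i| \;\leq\; 2d\,h_{i-1}^{\,2} \;=\; 2d\bigl((2d)^{2^{i-1}-1}h^{2^{i-1}}\bigr)^{2} \;=\; (2d)^{2^i-1}h^{2^i} \;=\; h_i,
\]
together with a $\Xi_{r-i}$-subset $\mathcal{W}_i$ of $\mathcal{W}_{i-1}\subseteq\mathcal{W}_0$ such that $Z_i$ is $3$-mirrored by $(a,x,y,\mathcal{W}_i)$. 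Setting $\Gamma_i:=\Gamma'_1$ and appending $(Z_i,\Gamma_i)$ to the phantom from step $i-1$ yields a $(Z_0,d,i)$-phantom, since the condition \ref{P2} at level $i$ depends only on the pair $Z_{i-1}\subseteq Z_i$ and the map $\Gamma_i$, all of which are exactly what the lemma delivers.

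The proof is essentially pure bookkeeping: Lemma~\ref{lem:gettingphantom} already contains all the structural content, and Theorem~\ref{thm:gettingphantom} is obtained by repeated application. The only place where care is needed is the doubly-exponential closed form $h_i=(2d)^{2^i-1}h^{2^i}$ for the size bound, which feeds into the recursion defining $\Xi_k$; this makes the $\Xi_k$ themselves grow extremely fast, but they depend only on $d,h,t,r$, as required. Taking $i=r$ in the invariant yields the final conclusion: a $(Z_0,d,r)$-phantom in $G$.
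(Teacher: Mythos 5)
Your proposal is correct and follows essentially the same approach as the paper: define the sequence $\Xi_i$ by the same recursion $\Xi_k=\xi(d,(2d)^{2^{r-k}-1}h^{2^{r-k}},t,\Xi_{k-1})$, then prove the same inductive invariant by applying Lemma~\ref{lem:gettingphantom} once per step and gluing the resulting $(Z_{i-1},d,1)$-phantom onto the existing $(Z_0,d,i-1)$-phantom. The only tiny slip is calling the base case ``the empty tuple''; a $(Z_0,d,0)$-phantom is the $1$-tuple $(Z_0)$, but this does not affect the argument.
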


\begin{proof}

Let $\xi(\cdot,\cdot,\cdot,\cdot)$ be as in Lemma~\ref{lem:gettingphantom}. We define $\{\Xi_i=\Xi_i(d,h,t):i=0,\ldots, r\}$ recursively, as follows. We already know that $\Xi_0=\Xi_0(d,h,t)=1$. For each $i\in [r]$, assuming $\Xi_{i-1}$ is defined, let
$$\Xi_i=\Xi_i(d,h,t)=\xi(d,(2d)^{2^{r-i}-1}h^{2^{r-i}}, t, \Xi_{i-1}).$$
We prove, by induction on $i$, that there exists a $(Z_0,d,i)$-phantom in $G$ which satisfies Theorem~\ref{thm:gettingphantom}.

The case $i=0$ is immediate from the assumptions that $|Z_0|\leq h$ and $Z_0$ is $3$-mirrored by the $\Xi_{r}$-kaleidoscope $(a,x,y,\mathcal{W}_0)$. Assume that $i\geq 1$. By the induction hypothesis, there exists a $(Z_0,d,i-1)$-phantom $(Z_0,\ldots, Z_{i-1},\Gamma_j:j\in [i-1])$ in $G$ with $|Z_{i-1}|\leq (2d)^{2^{i-1}-1}h^{2^{i-1}}$ and a $\Xi_{r-i+1}$-subset $\mathcal{W}_{i-1}$ of $\mathcal{W}$, such that $Z_{i-1}$ is $3$-mirrored by the $\Xi_{r-i+1}$-kaleidoscope $(a,x,y,\mathcal{W}_{i-1})$. 

From $\Xi_{r-i+1}=\xi(d,(2d)^{2^{i-1}-1}h^{2^{i-1}}, t, \Xi_{r-i})$, we can apply Lemma~\ref{lem:gettingphantom} to $Z_{i-1}$ and $(a,x,y,\mathcal{W}_{i-1})$, and deduce that there exists a $(Z_{i-1},d,1)$-phantom $(Z_{i-1}, Z_i,\Gamma_j)$ in $G$ with $|Z_i|\leq 2d((2d)^{2^{i-1}-1}h^{2^{i-1}})^2=(2d)^{2^{i}-1}h^{2^{i}}$ and a $\Xi_{r-i}$-subset $\mathcal{W}_{i}$ of $\mathcal{W}_{i-1}\subseteq \mathcal{W}$, such that $Z_{i}$ is $3$-mirrored by the $\Xi_{r-i}$-kaleidoscope $(a,x,y,\mathcal{W}_i)$. In particular, since $(Z_0,\ldots, Z_{i-1},\Gamma_j:j\in [i-1])$ is a $(Z_0,d,i-1)$-phantom in $G$ and $(Z_{i-1}, Z_i,\Gamma_i)$ is a $(Z_{i-1},d,1)$-phantom in $G$, it follows that $(Z_0,\ldots, Z_{i},\Gamma_j:j\in [i])$ is a $(Z_0,d,i)$-phantom in $G$. The induction is completed,  and so is the proof of Theorem~\ref{thm:gettingphantom}.
\end{proof}

Finally, we give a proof of Theorem~\ref{thm:twtokalei}, which we restate:

\setcounter{section}{3}
\setcounter{theorem}{0}
\begin{theorem}For all integers $d,t
\geq 1$ and $r_1\geq 0$, there exists an integer $\Omega=\Omega(d,r_1,t)\geq 1$ with the following property. Let $G\in \mathcal{E}_t$ be a graph of treewidth more than $\Omega$. Then there is a $2$-clique $Z_0^1$ as well as a $(Z^1_0,d,r_1)$-phantom in $G$. Consequently, for every $r_2\in [r_1-1]$, there is a $3$-clique $Z^2_0$ as well as a $(Z^2_0,d,r_2)$-phantom in $G$.
\end{theorem}
\setcounter{section}{5}
\setcounter{theorem}{9}
\begin{proof}
    Let $\{\Xi_{i}(d,3,t):i=0,\ldots, r_1\}$ be as in Theorem~\ref{thm:gettingphantom}, and let $\Xi=\Xi_{r_1}(d,3,t)$. Let $$\Omega(d,r_1,t)=\beta(\zeta(3,t,\Xi),t);$$
    where $\beta(\cdot,\cdot)$ and $\zeta(\cdot,\cdot, \cdot)$ come from Theorems~\ref{thm:noblocksmalltw_Ct} and \ref{thm:kaleidoscopeexists}. Let $G\in \mathcal{E}_t$ be a graph of treewidth more than $\Omega$.  From the choice of $\Omega$ combined with  Theorems~\ref{thm:noblocksmalltw_Ct} and \ref{thm:kaleidoscopeexists}, it follows that there exists a $\Xi$-kaleidoscope $(a,x,y,\mathcal{W})$ in $G$ as well as a $2$-clique $Z^1_0$ in $G$ such that $Z^1_0$ is $3$-mirrored by $(a,x,y,\mathcal{W})$. Thus, by Theorem~\ref{thm:gettingphantom} and the choice of $\Xi$, there exists a $(Z_0^1,d,r_1)$-phantom $\mf{p}_1=(Z^1_0,\ldots, Z^1_{r_1};\Gamma^1_i: i\in [r_1])$ in $G$, and so $Z^1_0$ satisfies \ref{thm:twtokalei}. 
    
    Now, let $r_2\in [r_1-1]$. Assuming $Z_0^1=\{z_1,z_2\}$, since $d,r_1\geq 1$, we may choose a vertex $z\in \Gamma_1^1(z_1z_2)$. Let $Z^2_0=\{z_1,z_2,z\}\subseteq Z^1_1$ and let $\mf{p}_2=\mf{p}_1[Z^2_0;1,r_2]$. Then $\mf{p}_2$ is a $(Z_0^2,d,r_2)$-phantom in $G$, and so $Z^2_0$ is a $3$-clique in $G$ which satisfies \ref{thm:twtokalei}. This complete the proof of Theorem~\ref{thm:twtokalei}.
\end{proof}   

\section{Acknowledgement}

Our thanks to Sophie Spirkl for insightful discussions around Conjecture~\ref{conj:getgo}, and to Bogdan Alecu, Maria Chudnovsky and Sophie Spirkl for hours of joint work on related topics.

\bibliographystyle{abbrv}
	\bibliography{ref}  

\begin{thebibliography}{10}

\bibitem{twviii}
T.~Abrishami, B.~Alecu, M.~Chudnovsky, S.~Hajebi, and S.~Spirkl.
\newblock Induced subgraphs and tree decompositions {VIII}. {E}xcluding a forest in (theta, prism)-free graphs.
\newblock {\em Combinatorica, {\rm (2024). \url{https://doi.org/10.1007/s00493-024-00097-0}}}.

\bibitem{twvii}
T.~Abrishami, B.~Alecu, M.~Chudnovsky, S.~Hajebi, and S.~Spirkl.
\newblock Induced subgraphs and tree decompositions {VII}. {B}asic obstructions in {$H$}-free graphs.
\newblock {\em J. Combin. Theory Ser. B}, 164:443--472, 2024.

\bibitem{twxi}
B.~Alecu, M.~Chudnovsky, S.~Hajebi, and S.~Spirkl.
\newblock Induced subgraphs and tree decompositions {XI}. {L}ocal structure in even-hole-free graphs of large treewidth.
\newblock {\em {\rm Manuscript available at \url{https://arxiv.org/abs/2309.04390}}}, 2023.

\bibitem{deathstar}
M.~Bonamy, E.~Bonnet, H.~D\'{e}pr\'{e}s, L.~Esperet, C.~Geniet, C.~Hilaire, S.~Thomass\'{e}, and A.~Wesolek.
\newblock Sparse graphs with bounded induced cycle packing number have logarithmic treewidth.
\newblock {\em J. Combin. Theory Ser. B}, 167:215--249, 2024.

\bibitem{evenholetrianglefree}
K.~Cameron, M.~V.~G. da~Silva, S.~Huang, and K.~Vu\u{s}kovi\'{c}.
\newblock Structure and algorithms for (cap, even hole)-free graphs.
\newblock {\em Discrete Math.}, 341(2):463--473, 2018.

\bibitem{newlayeredwheel}
M.~Chudnovsky and N.~Trotignon.
\newblock On treewidth and maximum cliques.
\newblock {\em {\rm Manuscript available at \url{https://arxiv.org/abs/2405.07471}}}, 2024.

\bibitem{Davies2}
J.~Davies.
\newblock Appeared in an Oberwolfach technical report, 2022.

\bibitem{diestel}
R.~Diestel.
\newblock {\em Graph theory}, volume 173 of {\em Graduate Texts in Mathematics}.
\newblock Springer, Berlin, fifth edition, 2018.

\bibitem{helly}
M.~C. Golumbic.
\newblock {\em Algorithmic graph theory and perfect graphs}, volume~57 of {\em Annals of Discrete Mathematics}.
\newblock Elsevier Science B.V., Amsterdam, second edition, 2004.
\newblock With a foreword by Claude Berge.

\bibitem{pinned}
S.~Hajebi.
\newblock Induced subdivisions with pinned branch vertices.
\newblock {\em Accepted in European Journal of Combinatorics. {\rm Manuscript available at \url{https://arxiv.org/abs/2308.01502}}}, 2023.

\bibitem{KP}
H.~A. Kierstead and S.~G. Penrice.
\newblock Radius two trees specify {$\chi$}-bounded classes.
\newblock {\em J. Graph Theory}, 18(2):119--129, 1994.

\bibitem{multiramsey}
F.~P. Ramsey.
\newblock On a {P}roblem of {F}ormal {L}ogic.
\newblock {\em Proc. London Math. Soc. (2)}, 30(4):264--286, 1929.

\bibitem{GMV}
N.~Robertson and P.~Seymour.
\newblock Graph minors. {{V}}. {{E}}xcluding a planar graph.
\newblock {\em Journal of Combinatorial Theory, Series B}, 41(1):92--114, 1986.

\bibitem{GMII}
N.~Robertson and P.~D. Seymour.
\newblock Graph minors. {{II}}. {{A}}lgorithmic aspects of tree-width.
\newblock {\em Journal of algorithms}, 7(3):309--322, 1986.

\bibitem{layered-wheels}
N.~L.~D. Sintiari and N.~Trotignon.
\newblock ({T}heta, triangle)-free and (even hole, {$K_4$})-free graphs---part 1: {L}ayered wheels.
\newblock {\em J. Graph Theory}, 97(4):475--509, 2021.

\end{thebibliography}

\end{document}